\documentclass[11pt]{amsart}
\usepackage[T1]{fontenc}
\usepackage{lmodern}
\usepackage{amsmath,amssymb,amsthm,mathtools}
\usepackage[margin=1.08in]{geometry}
\usepackage{microtype}
\usepackage{xcolor}
\usepackage[colorlinks=true,linkcolor=blue!55!black,
  citecolor=blue!55!black,urlcolor=blue!55!black]{hyperref}

\allowdisplaybreaks 

\newtheorem{theorem}{Theorem}[section]
\newtheorem{proposition}[theorem]{Proposition}
\newtheorem{lemma}[theorem]{Lemma}
\newtheorem{corollary}[theorem]{Corollary}
\theoremstyle{remark}

\numberwithin{equation}{section}
\newcommand{\N}{\mathbb N}
\newcommand{\Z}{\mathbb Z}

\newcommand{\C}{\mathbb C}
\newcommand{\Torus}{\mathbb T}
\newcommand{\E}{\mathbb E}
\newcommand{\one}{\mathbf 1}

\newcommand{\bh}{\mathbf h}
\newcommand{\ba}{\mathbf a}
\newcommand{\ve}{\varepsilon}
\newcommand{\norm}[1]{\left\lVert#1\right\rVert}
\newcommand{\abs}[1]{\left|#1\right|}
\DeclareMathOperator{\Li}{Li}

\author[B. Wang]{Biao Wang}
\address{School of Mathematics and Statistics, Yunnan University, Kunming, Yunnan 650500, China}
\email{bwang@ynu.edu.cn}
\date{}

\title[Averaged dynamical Chowla's conjecture]
{Dynamical generalizations of Chowla's conjecture on short averages}
\subjclass[2020]{11N37, 37A44}
\keywords{prime Omega function, Chowla conjecture, unique ergodicity}

\begin{document}
\begin{abstract}
In 1965, Chowla conjectured that the signs of the Liouville function become asymptotically uncorrelated at any fixed collection of distinct shifts. In 2015, Matom\"aki, Radziwi\l\l{} and Tao proved an averaged form of Chowla's conjecture. In 2022, Lichtman proved a variant of this conjecture  over primes on average. In the same year, Lichtman and Ter\"av\"ainen proved the Hardy--Littlewood--Chowla conjecture on average. In this article, motivated by the recent work of Bergelson and Richter on the dynamical generalizations of the prime number theorem, we will establish the dynamical generalizations of these three results related to Chowla's conjecture. In the proofs, we will show a uniform pretentious-distance estimate on the prime Omega function. Then  exponential sum estimates are used to establish the averaged shift invariance of the distributions related to the shifted sum of the prime Omega function.
\end{abstract}
\maketitle
\raggedbottom

\section{Introduction and main results}

Let $\Omega(n)$ denote the number of prime factors of a positive integer
$n$, counted with multiplicity, with $\Omega(1)=0$.
Throughout, $p$ denotes a prime, $\pi(N)=\#\{p\leq N\}$, and $\Lambda$
denotes the von Mangoldt function, extended by zero to nonpositive integers.
Write $\lambda(n)=(-1)^{\Omega(n)}$ for the Liouville function.
It is well-known (e.g., \cite{Landau1909}) that the prime number theorem (PNT) is equivalent to the assertion that
\begin{equation}\label{pnt_Liouville}
	\lim_{N\to\infty}\frac1N\sum_{n=1}^N\lambda(n)=0.
\end{equation}

Let $k\ge1$ be a natural number. In 1965, Chowla \cite{Chowla1965} gave the following conjecture on the $k$-point correlations of the Liouville function: 
		for any sequence of $k$ distinct non-negative integers  $h_1, \ldots, h_k$, we have
		\begin{equation}\label{Chowla_conjecture_eqn}
			\lim_{N\to\infty}\frac1N\sum_{n=1}^N \lambda(n+h_1)\ldots\lambda(n+h_k) = 0.
		\end{equation}
For the case $k=1$, Chowla's conjecture \eqref{Chowla_conjecture_eqn} is the same as the equivalent form \eqref{pnt_Liouville} of the PNT. For $k\ge2$,  \eqref{Chowla_conjecture_eqn} remains open. In 2015, Matom\"aki,  Radziwi\l\l{} and Tao \cite{MRT} proved the following  averaged form of  \eqref{Chowla_conjecture_eqn}: for any $10\leq H\leq N$, we have
\begin{equation}\label{MRT2015}
	\sum_{1\leq h_1,\dots, h_k\leq H} \Big|\sum_{1\leq n\leqslant N} \lambda(n+h_1)\ldots\lambda(n+h_k)\Big| \ll_k \Bigg(\frac{\log\log H}{\log H}+ \frac1{\log^{1/3000}N}\Bigg)H^{k}N.
\end{equation}

In 2022, Lichtman \cite{Lichtman2022} proved an averaged variant of \eqref{Chowla_conjecture_eqn} over primes: for $H<N$ and $\log H/\log\log N \to\infty$ as $N\to\infty$, we have
\begin{equation}\label{Lichtman2022}
	\sum_{1\leq h_1,\dots, h_k\leq H} \Big|\sum_{p\leqslant N} \lambda(p+h_1)\cdots\lambda(p+h_k)\Big| =o(H^{k}\pi(N)).
\end{equation}

In the same year,  Lichtman and Ter\"av\"ainen \cite{LT} proved the following hybrid form of the conjectures of Hardy–Littlewood and of Chowla: for any $k,\ell \ge1,\ve>0$, $(\log N)^{\ell +\ve} \leq H \leq \exp((\log N)^a)$ with $a=a(\ve,\ell)>0$ small enough, we have
\begin{equation}\label{LT2022}
\sum_{h_1\leq H} \Big|  \sum_{n\leq N}  \mu(n+h_1) \cdots \mu(n+h_k) \Lambda(n+a_1)\cdots \Lambda(n+a_\ell)\Big| \ll HN\frac{\log\log H}{\log H}.
\end{equation}

Let $(X,\nu,T)$ be a uniquely ergodic topological dynamical system:
$X$ is a compact metric space, $T:X\to X$ is continuous, and $\nu$
is its unique invariant Borel probability measure.
Let $C(X)$ denote the continuous complex-valued functions on $X$. In 2022, Bergelson and Richter \cite[Theorem A]{BR} proved a dynamical generalization of \eqref{pnt_Liouville} that
\begin{equation}\label{eq:BR}
\lim_{N\to\infty} \frac1N\sum_{n\leq N}f(T^{\Omega(n)}x)=\int_X f\,d\nu
\end{equation}
for every $f\in C(X)$ and $x\in X$. For the rotation on two points, this recovers \eqref{pnt_Liouville}. 

Motivated by \eqref{eq:BR}, we prove dynamical generalizations of
\eqref{MRT2015}, \eqref{Lichtman2022} and \eqref{LT2022} under certain ranges of $H$. 
Write  $\nu(f)=\int_X f\,d\nu.$ For a positive integer $H$, put $[H]=\{1,\ldots,H\}$ and
\[
 S_{\bh}(n)=\sum_{j=1}^k\Omega(n+h_j),\qquad
 \E_{\bh\in[H]^k}F(\bh)=\frac1{H^k}
 \sum_{1\leq h_1,\ldots,h_k\leq H}F(\bh)
\]
for any function $F:\N^k\to \C$.
Write $\bh'=(h_2,\ldots,h_k)$ and $\bh=(h_1,\bh')$, and put
\[
 \E_{h_1\in[H]}F(h_1,\bh')=\frac1H\sum_{h_1=1}^H F(h_1,\bh').
\]
In a single-shift average, $\bh'$ is held fixed. Uniformity for
$\bh'\in[N]^{k-1}$ permits these shifts to depend on $N$ and $H$,
but not on the summation variable $h_1$.
For $k=1$, $[N]^0$ consists of the empty tuple.
Define continuous functions on $X$ by
\[
 A^{\rm int}_{N,\bh}f(x)=\frac1N\sum_{n\leq N}f(T^{S_{\bh}(n)}x),
 \qquad
 A^{\rm pr}_{N,\bh}f(x)=\frac1{\pi(N)}
 \sum_{p\leq N}f(T^{S_{\bh}(p)}x).
\]
For any $g\in C(X)$, define $\| g\|_{\infty} : = \sup_{x\in X} |g(x)|$.

\begin{theorem}\label{thm:integers}
Let $k\geq1$ be fixed. For every uniquely ergodic system $(X,\nu,T)$
and every $f\in C(X)$,
\begin{equation}\label{eq:integer-strong}
 \lim_{N\to\infty}\sup_{\log N\leq H\leq N}
 \sup_{\bh'\in[N]^{k-1}}
 \E_{h_1\in[H]}\norm{A^{\rm int}_{N,(h_1,\bh')}f-\nu(f)}_\infty=0.
\end{equation}
Consequently, for every integer-valued $H=H(N)$ with
$\log N\leq H\leq N$, every choice of
$\bh'=\bh'(N)\in[N]^{k-1}$, and every $x\in X$,
\begin{equation}\label{mainthm_dyn_Chowla_avg_eqn}
 \lim_{N\to\infty}\frac1{HN}
 \sum_{h_1=1}^{H}
 \abs{\sum_{n\leq N}f(T^{S_{(h_1,\bh')}(n)}x)}
 =\abs{\nu(f)}.
\end{equation}
The convergence is uniform in $x$, $H$, and $\bh'$ in the stated ranges.
\end{theorem}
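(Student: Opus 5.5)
We follow the spectral strategy of Bergelson--Richter. First reduce \eqref{eq:integer-strong} to a statement about exponential twists. Since the system is uniquely ergodic, for every $f\in C(X)$ and $\ve>0$ there is $L$ with $\norm{\frac1L\sum_{\ell<L}f\circ T^{\ell}-\nu(f)}_\infty<\ve$. The plan is to show that the distribution of $S_{\bh}(n)$, $n\le N$, is asymptotically invariant under the shift $m\mapsto m+1$, uniformly on average over $h_1$. Concretely, let $\mu_{N,\bh}$ be the counting measure on $\Z$ given by $\mu_{N,\bh}(m)=\frac1N\#\{n\le N:S_{\bh}(n)=m\}$. Then $A^{\rm int}_{N,\bh}f(x)=\sum_m\mu_{N,\bh}(m)f(T^mx)$. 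Writing $f=\frac1L\sum_{\ell<L}f\circ T^\ell+(\text{error}<\ve)$ up to a coboundary-type difference, we obtain
\[
\norm{A^{\rm int}_{N,\bh}f-\nu(f)}_\infty\le \ve+\norm f_\infty\,L\,\norm{\mu_{N,\bh}-\mu_{N,\bh}*\delta_1}_{\ell^1}.
\]
So it suffices to prove
\[
\sup_{\log N\le H\le N}\sup_{\bh'}\E_{h_1\in[H]}\norm{\mu_{N,(h_1,\bh')}-\mu_{N,(h_1,\bh')}*\delta_1}_{\ell^1}\to0.
\]

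To establish this $\ell^1$ shift invariance, I would use the fact that $\Omega(n+h_j)$ is concentrated in a window of width $\asymp\sqrt{\log\log N}$ around $\log\log N$ (Erd\H{o}s--Kac / Tur\'an--Kubilius). Hence $\mu_{N,\bh}$ is essentially supported on an interval of length $M\asymp\sqrt{k\log\log N}$. On such an interval, the $\ell^1$ shift defect can be controlled via Fourier analysis on $\Torus$. We have $\hat\mu_{N,\bh}(\theta)=\frac1N\sum_{n\le N}e(\theta S_{\bh}(n))$, which equals $\frac1N\sum_n\prod_j g_\theta(n+h_j)$ with the completely multiplicative $g_\theta(n)=e(\theta\Omega(n))$. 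A smoothing argument then suffices: compare with a version convolved with a discrete Gaussian of width $\eta M$. It is then enough to show that for each fixed $\theta$ with $\norm{\theta}\ge\delta$, the average $\E_{h_1}|\frac1N\sum_n\prod_j g_\theta(n+h_j)|$ tends to $0$, uniformly in $\theta$ in that range. The frequencies with $\norm\theta<\delta$ contribute little to the shift defect, because $|1-e(\theta)|$ is small there.

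The key input is therefore a uniform Matom\"aki--Radziwi\l\l--Tao type estimate. Fix $\bh'$ and put $F(n)=\prod_{j\ge2}g_\theta(n+h_j)$, which is bounded by $1$ and is arbitrary as a function of $n$. Then
\[
\E_{h_1\in[H]}\Big|\frac1N\sum_{n}g_\theta(n+h_1)F(n)\Big|\le\Big(\frac1N\sum_n\Big|\frac1H\sum_{h\le H}g_\theta(n+h)\Big|^2\Big)^{1/2}+o(1)
\]
by Cauchy--Schwarz after the change of variables $n\mapsto n-h_1$. The right side is a short-interval average of the multiplicative function $g_\theta$. Matom\"aki--Radziwi\l\l{} (in its version allowing $H\to\infty$ arbitrarily slowly, with the quantitative dependence through $\log\log H/\log H$) bounds this by the pretentious distance $\inf_{|t|\le N}\mathbb D(g_\theta,n^{it};N)$ plus $o(1)$. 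This is exactly where $H\ge\log N$ suffices.

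The main obstacle is the uniform pretentious-distance estimate for $g_\theta$. We need
\[
\inf_{|t|\le N}\sum_{p\le N}\frac{1-\mathrm{Re}\,e(\theta)p^{-it}}{p}\to\infty
\]
uniformly for $\norm\theta\ge\delta$. For small $|t|$ this sum is $\approx(1-\cos2\pi\theta)\log\log N$. For large $|t|$, I would use the Vinogradov--Korobov zero-free region to show that $\sum_{p}p^{-1-it}$ is bounded over $1/\log N\le|t|\le N$, so that the distance is again $\gg\delta^2\log\log N$. I expect this step to require the most care, since uniformity in both $\theta$ and $t$ must be tracked. Given it, the first statement follows. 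The consequence \eqref{mainthm_dyn_Chowla_avg_eqn} then follows from the triangle inequality: $\big||A^{\rm int}f(x)|-|\nu(f)|\big|\le\norm{A^{\rm int}f-\nu(f)}_\infty$, averaged over $h_1$.
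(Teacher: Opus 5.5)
\textbf{There is a genuine gap, in the key correlation step.} Your outer framework is the same as the paper's: the ergodic smoothing bound, concentration of $S_{\bh}(n)$, Fourier control of the shift defect, and the reverse triangle inequality for \eqref{mainthm_dyn_Chowla_avg_eqn}. The failing step is your Cauchy--Schwarz inequality bounding $\E_{h_1\in[H]}\abs{\frac1N\sum_n g_\theta(n+h_1)F(n)}$ by the mean square of the untwisted short averages $\frac1H\sum_{h\le H}g_\theta(n+h)$.

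That inequality does not follow from Cauchy--Schwarz. The absolute value sits inside the $h_1$-average, so removing it introduces unimodular weights $c_{h_1}$. Cauchy--Schwarz then only yields $\frac1N\sum_n\abs{\frac1H\sum_{h\le H}c_h g_\theta(n+h)}^2$, which is not a short-interval average of a multiplicative function.

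The inequality is in fact false even for completely multiplicative $g$. Take $g=\chi_3$, the nonprincipal character modulo $3$, and $F=\chi_3$ (this is $k=2$, $h_2=3$). Every sum of $\chi_3$ over consecutive integers lies in $\{-1,0,1\}$, so your right-hand side is $O(1/H)$. On the other hand, $\frac1N\sum_{n\le N}\chi_3(n)\chi_3(n+h)$ equals $2/3$ or $-1/3$ up to $O(1/N)$, according as $3\mid h$ or not. So the left-hand side is about $4/9$.

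Short-interval cancellation in the sense of Matom\"aki--Radziwi\l\l{} therefore cannot control averaged correlations. What is needed is the twisted bound for $\sup_\alpha\int_0^N\abs{\sum_{x\le n\le x+H}g(n)e(\alpha n)}dx$. This is what underlies \cite[Theorem 1.6]{MRT}, which the paper applies with $g_1=B_{\theta,\bh'}$ (arbitrary bounded) and $g_2=u_\theta$.

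\textbf{This changes the distance hypothesis you must verify.} The saving in that theorem is $e^{-M/80}$ with $M=\mathcal M(u_\theta;10N,Q)$. This is the distance from $u_\theta=g_\theta$ to every $\chi(n)n^{it}$ with $\chi$ of modulus $q\le Q=\min\{(\log N)^{1/125},(\log H)^{20}\}$ and $|t|\le10N$, not just to $n^{it}$. Your distance argument treats only the trivial character, so the paper's central Lemma~\ref{lem:full-distance} is missing. That lemma handles the two ranges of $t$ as follows:
\begin{itemize}
\item For $|t|\le1$ it uses Siegel--Walfisz in character form on $p>\exp(\sqrt{\log Y})$, together with the oscillatory-integral bounds of Lemma~\ref{lem:oscillatory}.
\item For $1\le|t|\le Y$ it uses the character-twisted zero-free-region bound \eqref{eq:rough-character}. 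It combines this with a power trick, choosing $a\le100$ with $\|a\theta\|_{\mathbb R/\mathbb Z}\le1/100$, to obtain uniformity in $\theta$.
\end{itemize}
Separately, $\sum_{p\le N}p^{-1-it}$ is not bounded for $1/\log N\le|t|\le1$; it is $\log(1/|t|)+O(1)$ there. So that range needs separate treatment even for $\chi=1$.

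\textbf{A quantitative point is also missing.} It is not enough that $\E_{h_1}|P_{(h_1,\bh')}(\theta)|\to0$ uniformly for $|\theta|\ge\delta$. Passing from the Parseval $\ell^2$ bound to $\ell^1$ on a window of width $R\gg\sqrt L$ costs a factor $\sqrt R$. You therefore need both $R\delta^3\to0$ and $R\sup_{\delta\le|\theta|\le1/2}\E_{h_1}|P|^2\to0$. The paper achieves this with $\delta=L^{-2/5}$, $R=L^{3/5}$, and the rate $\log L/L$ that comes from $\log\log H/\log H$ when $H\ge\log N$.
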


\begin{theorem}\label{thm:primes}
Let $k\geq1$ be fixed. For every uniquely ergodic system $(X,\nu,T)$
and every $f\in C(X)$,
\begin{equation}\label{eq:prime-strong}
 \lim_{N\to\infty}\sup_{\exp(\sqrt{\log N})\leq H\leq N}
 \sup_{\bh'\in[N]^{k-1}}
 \E_{h_1\in[H]}\norm{A^{\rm pr}_{N,(h_1,\bh')}f-\nu(f)}_\infty=0.
\end{equation}
Consequently, for every integer-valued $H=H(N)$ with
$\exp(\sqrt{\log N})\leq H\leq N$, every choice of
$\bh'=\bh'(N)\in[N]^{k-1}$, and every $x\in X$,
\begin{equation}\label{mainthm_prime_chowla_avg_eqn}
 \lim_{N\to\infty}\frac1{H\pi(N)}
 \sum_{h_1=1}^{H}
 \abs{\sum_{p\leq N}f(T^{S_{(h_1,\bh')}(p)}x)}
 =\abs{\nu(f)}.
\end{equation}
The convergence is uniform in $x$, $H$, and $\bh'$ in the stated ranges.
\end{theorem}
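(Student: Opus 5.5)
The statement to prove is Theorem~\ref{thm:primes}. I will follow the Bergelson--Richter strategy of reducing to exponential sums, adapted to the prime setting.

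\textbf{Step 1: reduction to characters.} I will reduce \eqref{eq:prime-strong} to a statement about twisted sums $\sum_{p\le N}e(\theta S_{\bh}(p))$, using unique ergodicity. By unique ergodicity, $\frac1M\sum_{m<M}f(T^m x)\to\nu(f)$ uniformly in $x$. Hence it suffices to show that, on average over $h_1$, the distribution of $S_{\bh}(p)$ for $p\le N$ is asymptotically invariant under the shift $m\mapsto m+1$. More precisely, I aim for
\[
\lim_{N\to\infty}\sup_{H,\bh'}\E_{h_1\in[H]}\sum_{m\in\Z}\abs{\mu_{N,\bh}(m)-\mu_{N,\bh}(m-1)}=0,
\]
where $\mu_{N,\bh}(m)=\pi(N)^{-1}\#\{p\le N: S_{\bh}(p)=m\}$. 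Iterating this $M$ times, I can replace $A^{\rm pr}_{N,\bh}f$ by the Ces\`aro average $\sum_m \mu_{N,\bh}(m)\frac1M\sum_{j<M}f(T^{m+j}x)$, at a cost of $M\|f\|_\infty$ times the shift defect. Unique ergodicity then makes this $\nu(f)+o_M(1)$ uniformly. The consequence \eqref{mainthm_prime_chowla_avg_eqn} follows immediately from the triangle inequality, since $\abs{\abs{a}-\abs{b}}\le\abs{a-b}$.

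\textbf{Step 2: establishing shift invariance.} To get the shift invariance, I will not use Fourier analysis directly. Instead I will use a Bergelson--Richter type identity. For a typical $n$, multiplying by a prime $q$ from a suitable interval shifts $\Omega$ by exactly $1$. Concretely, I will pick a set $\mathcal P$ of primes in $[P,P^{1+\delta}]$ with $P$ tending to infinity slowly, and use the Tur\'an--Kubilius-type approximation
\[
1\approx \frac{1}{\mathcal L}\sum_{q\in\mathcal P}\frac{\one_{q\mid m}}{1/q}\cdot\frac1{\#\mathcal P'}\ \text{(weighted)},
\]
valid for almost all $m=p+h_1$. Here the $L^2$ error is controlled by the variance of $\sum_{q\in\mathcal P}\one_{q\mid p+h_1}$. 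Since $h_1$ is averaged, the divisibility $q\mid p+h_1$ is equidistributed: for each fixed $p$, the proportion of $h_1\le H$ with $q\mid p+h_1$ is $1/q+O(1/H)$. This is exactly where the average over $h_1$ replaces the role of $n$ itself in the integer case. It gives the Tur\'an--Kubilius bound provided $P^{2(1+\delta)}\le H$, which is compatible with $H\ge\exp(\sqrt{\log N})$. Writing $p+h_1=qm'$, the function $\Omega(p+h_1)$ equals $1+\Omega(m')$. I then need to compare the count of pairs $(p,h_1)$ with $p+h_1=qm'$ and $S=m$ against those with $S=m-1$ obtained by dropping $q$. This comparison is effectively a change of variables $h_1\mapsto$ a nearby shift, and the remaining coordinates $\Omega(p+h_j)$, $j\ge2$, are unaffected because $\bh'$ is fixed.

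\textbf{Step 3: the main obstacle, and the fallback.} The main obstacle is this comparison step. Exchanging $q m'$ for $m'$ rescales the variable, and the rescaled set does not stay aligned with primes $p$ and fixed $\bh'$. The cleanest route, and the one I will try first, is therefore to bypass combinatorics and prove the equivalent exponential-sum statement. That statement is, for each fixed $\theta\notin\Z$,
\[
\E_{h_1\in[H]}\abs{\frac1{\pi(N)}\sum_{p\le N}e\big(\theta S_{\bh}(p)\big)}^{2}\to0,
\]
uniformly. Fourier-transforming in $\theta$ over a neighbourhood of $0$, together with the shift-invariance criterion, recovers Step~1, because $\mu_{N,\bh}$ is supported on $O(\log N)$ values.

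Expanding the square gives a double sum over $p,p'$. I factor out $e(\theta\sum_{j\ge2}\Omega(p+h_j))$ as a bounded weight $c_p$ and average over $h_1$. The problem then reduces to a bound of the form
\[
\sum_{p,p'}c_p\overline{c_{p'}}\,\E_{h_1}\,g(p+h_1)\overline{g(p'+h_1)},
\]
where $g(n)=e(\theta\Omega(n))$ is a $1$-bounded multiplicative function. For this I need the uniform pretentious-distance estimate announced in the abstract: $\inf_{|t|\le N}\mathbb D(g,n^{it};N)\to\infty$ uniformly for $\theta$ in compacts away from $\Z$, since $\sum_p(1-\cos 2\pi\theta)/p$ diverges. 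Given that estimate, the Matom\"aki--Radziwi\l\l{} short-interval theorem gives $\sum_{x}|\sum_{x<n\le x+H}g(n)e(\alpha n)|$-type cancellation for almost all intervals. For primes, with $H\ge\exp(\sqrt{\log N})$, I would invoke the Lichtman-style input, namely Matom\"aki--Radziwi\l\l--Tao bounds for sums of $g$ in short intervals twisted by a sieve weight. Inserting these bounds via Cauchy--Schwarz in $p$ completes the argument.
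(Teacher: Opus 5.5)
Your Step~1 and the Fourier route of Step~3 follow the same plan as the paper: smoothing via approximate shift invariance of the law of $S_{\bh}(p)$, Parseval, a pretentious-distance bound for $u_\theta(n)=e(\theta\Omega(n))$, short exponential sums, and a prime input. Step~2 is abandoned, so everything rests on Step~3, and as written Step~3 does not close for quantitative reasons.

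\textbf{The $\ell^2\to\ell^1$ step fails.} The claim that $\mu_{N,\bh}$ ``is supported on $O(\log N)$ values'' cannot turn $\ell^2$ information into the $\ell^1$ shift invariance you need. Cauchy--Schwarz over $O(\log N)$ points would require $\E_{h_1}\|\Delta\mu_{N,\bh}\|_{\ell^2}^2=o(1/\log N)$, and this is false. With $L=\log\log(20N)$, the second moment of $S_{\bh}(p)$ about $kL$ is $\ll L^2$ (this follows from the tail bound below). Hence $|P(\theta)|\geq 1/2$ for $|\theta|\leq c/L$, and Parseval gives $\|\Delta\mu_{N,\bh}\|_{\ell^2}^2\geq\int_{|\theta|\leq c/L}|1-e(\theta)|^2|P(\theta)|^2\,d\theta\gg L^{-3}$ for every $\bh$. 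What you need instead is a tail bound confining all but $o(1)$ of the mass to $[0,R]$ with $R$ far smaller than $\log N$, together with $\E_{h_1}\|\Delta\mu_{N,\bh}\|_{\ell^2}^2=o(1/R)$. The paper takes $R\asymp L$ and gets $\sum_{r>R}\rho^{\rm pr}_{\bh}(r)\ll(\log N)^{-2}$ from the Sathe--Selberg moment $\sum_{m\leq M}(3/2)^{\Omega(m)}\ll M(\log M)^{1/2}$ and the crude bound $\frac{N}{\pi(N)}\one_{\{n\text{ prime}\}}\ll\log N$.

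\textbf{The Fourier input must be uniform down to small $\theta$.} This is why ``for each fixed $\theta\notin\Z$'' and ``uniformly on compacts away from $\Z$'' are too weak; pointwise decay of $P$ is also not equivalent to $\ell^1$ shift invariance. On $|\theta|\leq\delta$ you only have $|P|\leq1$, which costs $\asymp\delta^3$. So you need $\delta=o(L^{-1/3})$ and $\sup_{\delta\leq|\theta|\leq1/2}\E_{h_1}|P(\theta)|^2=o(1/L)$, uniformly in $H$ and $\bh'$. The paper gets this from
\[
\inf_{q\leq(\log Y)^{1/125},\,\chi\bmod q,\,|t|\leq Y}\mathbb D(u_\theta,\chi n^{it};Y)^2\geq c\theta^2\log\log Y-C,
\]
applied at $\delta=L^{-2/5}$ to give distance $\gg L^{1/5}$. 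The infimum must include characters of modulus up to a power of $\log N$, since they enter the Matom\"aki--Radziwi\l\l--Tao bound. Divergence of $\sum_p(1-\cos2\pi\theta)/p$ only treats the principal character with $t=0$. The paper uses Siegel--Walfisz for $|t|\leq1$, and a Vinogradov--Korobov-type bound plus a Dirichlet-approximation power trick for $|t|\geq1$.

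\textbf{The prime step does not work as described.} Expanding $\E_{h_1}|\sum_pc_pg(p+h_1)|^2$ leaves short-interval binary correlations $\E_{h_1}g(p+h_1)\overline{g(p'+h_1)}$ at shifts $p'-p$. That is a Chowla-type input which is unavailable, and Cauchy--Schwarz in $p$ does not remove it. The paper instead uses the Lichtman--Ter\"av\"ainen transfer lemma $\sum_{r\leq h}|\sum_mF(m+r)G(m)|\ll C_1^{1/4}\eta^{1/4}hX$. Here $F=u_\theta(\cdot+b)$ satisfies a local $L^2$ bound uniform in $\alpha$ (from short exponential sums plus the distance bound). The other factor, $G=(\log N)\one_{\{\sqrt N<n\leq N,\ n\text{ prime}\}}\prod_{j\geq2}u_\theta(n+h_j)$, satisfies the fourth-moment bound for $\Lambda$-weighted short exponential sums with arbitrary bounded coefficients. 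The lemma is applied on blocks of length $h_0\approx\exp(\sqrt{\log N})$ tiling $[H]$. This is exactly where $H\geq\exp(\sqrt{\log N})$ enters, since then $\log\log h_0/\log h_0\ll L^{-B}$.
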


We now introduce the fixed prime-tuple weights. Let $\ell\geq1$
and $\ba=(a_1,\ldots,a_\ell)\in\Z^\ell$ be fixed, with distinct
components, and put
\begin{equation}\label{eq:weight-mass}
 w_{\ba}(n)=\prod_{i=1}^{\ell}\Lambda(n+a_i),\qquad
 C_{\ba}(N)=\frac1N\sum_{n=1}^{N}w_{\ba}(n).
\end{equation}
In the notation of the preceding theorems, define
\[
 A^{\rm wt}_{N,\bh}f(x)=\frac1N\sum_{n=1}^{N}
          w_{\ba}(n)f(T^{S_{\bh}(n)}x),\qquad
 f_J=\frac1J\sum_{j=0}^{J-1}f\circ T^j.
\]
The parameters $k,\ell,\ba$ remain fixed. The superscript $\mathrm{wt}$
always refers to this fixed weight. The $h_j$ need not be distinct
and may coincide with some of the $a_i$.

\begin{theorem}\label{thm:weighted}
For every uniquely ergodic system $(X,\nu,T)$ and every $f\in C(X)$,
\begin{equation}\label{eq:weighted-strong}
 \lim_{N\to\infty}\ \sup_{\exp(\sqrt{\log N})\leq H\leq N}
 \sup_{\bh'\in[N]^{k-1}}\E_{h_1\in[H]}
 \norm{A^{\rm wt}_{N,(h_1,\bh')}f-C_{\ba}(N)\nu(f)}_\infty=0.
\end{equation}

Consequently, for every integer-valued $H=H(N)$ satisfying
$\exp(\sqrt{\log N})\leq H\leq N$ and every
$\bh'=\bh'(N)\in[N]^{k-1}$, one has
\begin{equation}\label{eq:factorization}
 \frac1{HN}\sum_{h_1=1}^{H}\sum_{n=1}^{N}
 w_{\ba}(n)f(T^{S_{(h_1,\bh')}(n)}x)
 =\nu(f)C_{\ba}(N)+o(1),
\end{equation}
uniformly in $x\in X$ and in the permitted choices of $H$ and $\bh'$.
The corresponding average of absolute values satisfies
\begin{equation}\label{eq:absolute-factorization}
 \E_{h_1\in[H]}\abs{A^{\rm wt}_{N,(h_1,\bh')}f(x)}
 =C_{\ba}(N)\abs{\nu(f)}+o(1)
\end{equation}
with the same uniformity. If $\nu(f)=0$, both left-hand sides
tend to zero. If $\ell=1$, the limit in
\eqref{eq:factorization} is $\nu(f)$.
\end{theorem}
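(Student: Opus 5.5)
Our plan follows the Bergelson--Richter strategy. First, reduce \eqref{eq:weighted-strong} to a statement about exponential-type averages. By unique ergodicity, $f_J\to\nu(f)$ uniformly on $X$ as $J\to\infty$. Moreover $\|f_J\circ T^m - f\circ T^m\|$ is not small, but the difference $f\circ T^m-f_J\circ T^m$ is an average of terms $f\circ T^m-f\circ T^{m+j}$ with $0\le j<J$. It therefore suffices to prove an \emph{averaged shift invariance}: for each fixed $j$,
\[
\E_{h_1\in[H]}\Big\|\frac1N\sum_{n\le N}w_{\ba}(n)\big(f(T^{S_{\bh}(n)}x)-f(T^{S_{\bh}(n)+j}x)\big)\Big\|_\infty\to0
\]
uniformly in $H$ and $\bh'$. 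Granting this, we write $f=f_J+(f-f_J)$. The $f_J$ part contributes $C_{\ba}(N)\nu(f)+O(C_{\ba}(N)\|f_J-\nu(f)\|_\infty)$, and $C_{\ba}(N)\ll 1$ by Brun--Titchmarsh/sieve upper bounds (for $\ell\ge2$; for $\ell=1$ we have $C_{\ba}(N)\to1$ by the PNT). Letting $N\to\infty$ and then $J\to\infty$ gives \eqref{eq:weighted-strong}. The consequences \eqref{eq:factorization} and \eqref{eq:absolute-factorization} follow from the triangle inequality $\big||A|-C|\nu(f)|\big|\le|A-C\nu(f)|$.

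Second, to obtain shift invariance uniformly in $x$, we use the structure of $S_{\bh}(n)$ in the variable $h_1$ only. Write $S_{\bh}(n)=\Omega(n+h_1)+R(n)$, where $R(n)=\sum_{j\ge2}\Omega(n+h_j)$ does not depend on $h_1$. Following the Bergelson--Richter idea, we compare $\Omega(m)$ with $\Omega(pm)=\Omega(m)+1$ for primes $p$ in a set $P$ of primes of size roughly $\exp((\log\log)^{...})$ chosen below $H$. A Tur\'an--Kubilius-type argument over $m=n+h_1$ shows that $\sum_m F(\Omega(m))\approx\sum_{p\in P}\frac1{|P|}\sum_{m}\one_{p\mid m}\,p\,F(\Omega(m/p)+1)$. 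The effect of this comparison is that $f(T^{\Omega(m)}x)$ gets related to $f(T^{\Omega(m')+1}x)$. Because we average over $h_1\in[H]$, the change of variables $m=n+h_1$ with $n$ fixed lets us absorb the divisibility condition into the $h_1$-average: for fixed $n$, the residue of $h_1$ modulo $p$ is equidistributed on $[H]$ when $p\le H^{1/2}$. This removes any need to control the weight $w_{\ba}(n)$ or $R(n)$ along progressions in $n$. After the substitution, the inner expression is bounded by the sup-norm of $f$ times the discrepancy, uniformly in $x$ and $\bh'$. This is exactly why the shift $h_1$ is averaged while $\bh'$ stays fixed.

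Third, the quantitative step, which we expect to be the main obstacle. We must show that for the weighted sum, the pairs $(n,h_1)$ with $p\mid n+h_1$ are equidistributed with density $1/p$ uniformly in $p\in P$, and that the Tur\'an--Kubilius variance is small. With the $h_1$-average available, we would first try the following. Expand $\one_{p\mid n+h_1}$ in additive characters $e(a(n+h_1)/p)$. The nonzero frequencies give $\frac1H\sum_{h_1}e(ah_1/p)\ll p/H$, which is negligible provided $\max P\le H^{1/2}$. It requires no cancellation in $n$, so the weight $w_{\ba}$ enters only through its total mass $C_{\ba}(N)$, which is bounded. For the variance, the cross terms for $p\ne q$ similarly reduce to $h_1$-averages of $\one_{pq\mid n+h_1}$, again controlled when $pq\le H$. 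This also requires the logarithmic density $\sum_{p\in P}1/p\to\infty$, which is the source of the lower bound on $H$. With $H\ge\exp(\sqrt{\log N})$, we can take $P$ as the primes in $[\exp((\log N)^{1/8}),\exp((\log N)^{1/4})]$, giving $\sum_{p\in P}1/p\asymp1$. We then refine using the $(\log)$-scale pigeonholing of Bergelson--Richter, or a Matom\"aki--Radziwi\l\l{}--Tao-style choice of a sparse sequence of intervals. The delicate points are the following:
\begin{itemize}
\item The weight $w_{\ba}(n)$ can be as large as $(\log N)^\ell$. The variance estimate therefore needs a second-moment bound of the form $\frac1N\sum w_{\ba}(n)^2\ll(\log N)^{\ell}$. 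This is too weak, so we instead apply Cauchy--Schwarz with the measure $w_{\ba}(n)\,dn$ rather than uniform measure. That keeps only the first moment $C_{\ba}(N)\ll1$.
\item The uniformity of the error in $\bh'\in[N]^{k-1}$ is automatic, since $R(n)$ only modifies the point $T^{R(n)}x$, and every bound is uniform in $x$.
\end{itemize}
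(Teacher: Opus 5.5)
Your first step, reducing to averaged shift invariance via $f_J$ and uniform ergodicity, is sound and is essentially the paper's Lemma~\ref{lem:ergodic}. The Tur\'an--Kubilius replacement of $1$ by $(\sum_{p\in P}1/p)^{-1}\sum_{p\in P}\one_{p\mid n+h_1}$ is also harmless: its error can be bounded with absolute values everywhere, so fixing $n$ and equidistributing $h_1$ modulo $p$ and $pq$ is legitimate there. The gap is that you never say how the main term yields shift invariance, and it does not. The statement puts $\norm{\cdot}_\infty$ inside $\E_{h_1\in[H]}$, so the main term must be controlled for individual $h_1$. For fixed $h_1$ it equals $(\sum_{p\in P}1/p)^{-1}\sum_{p\in P}\frac1N\sum_m w_{\ba}(pm-h_1)f(T^{\Omega(m)+1+R(pm-h_1)}x)$. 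This is the weight along the progression $n\equiv-h_1\pmod p$, with an $n$-dependent exponent $R(n)$ --- exactly what you hoped to avoid. Fixing $n$ and letting $h_1$ run through a residue class would move the $h_1$-sum inside the absolute value, which is not allowed here. Already for the two-point rotation with $k=\ell=1$, $\ba=(0)$, the theorem contains $\E_{h\in[H]}\abs{\frac1N\sum_{n\le N}\Lambda(n)\lambda(n+h)}\to0$, a Lichtman-type Chowla-over-primes bound. There the $h$-average has to be exploited through something like a second moment in $h$, which is a two-point problem; this is what the circle method of MRT and Lichtman--Ter\"av\"ainen does, and Tur\'an--Kubilius in $h$ gives no handle on it.

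Even for the weaker identity \eqref{eq:factorization}, where the swap is allowed, the comparison step is missing. For fixed $n$, $m=(n+h_1)/p$ runs over an interval of length $H/p$ near $n/p$, not near $n$. Bergelson--Richter repair the mismatch between $[1,N/p]$ and $[1,N]$ by matching primes against products of two primes scale by scale. That works because initial segments at nearby scales almost coincide, but short intervals near $n/p$ and $n/q$ are disjoint. You would therefore need short averages of $m\mapsto f(T^{\Omega(m)+c}x)$ to be essentially position-independent for almost all $n$ in the support of $w_{\ba}$. That is a Matom\"aki--Radziwi\l\l{} statement along primes or prime tuples. It does not follow from the almost-all version, whose exceptional-set bounds are far larger than the density $\ll(\log N)^{-\ell}$ of that support, and it is the real content of the theorem.

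The paper supplies it through Fourier analysis. Parseval reduces everything to $\E_{h_1\in[H]}\abs{P^{\rm wt}_{(h_1,\bh')}(\theta)}^2$ for $\abs{\theta}\ge\delta$. This is bounded using the uniform pretentious-distance estimate for $u_\theta$, the MRT short exponential sum bound, and the Lichtman--Ter\"av\"ainen transfer lemma with the fourth moment for $w_{\ba}$-weighted sums. Uniformity in $\bh'$ also comes from there, since $\prod_{j\ge2}u_\theta(n+h_j)$ enters as an arbitrary $1$-bounded coefficient. It is not automatic from uniformity in $x$, because $R(n)$ varies with $n$.

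Two smaller slips. Over $[\exp((\log N)^{1/8}),\exp((\log N)^{1/4})]$ one has $\sum_p 1/p\sim\frac18\log\log N$, not $\asymp1$; the latter would defeat Tur\'an--Kubilius. Also, the weight $\frac1{|P|}\sum_p p\,\one_{p\mid m}$ has huge variance; normalize by $\sum_{p\in P}1/p$ instead.
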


For a prime $p$, let
\[
 r_p(\ba)=\#\{a_1,\ldots,a_\ell\pmod p\},\qquad
 \mathfrak S(\ba)=\prod_p
    \left(1-\frac{r_p(\ba)}p\right)
    \left(1-\frac1p\right)^{-\ell}.
\]
The tuple is called \emph{admissible} if $r_p(\ba)<p$ for every
prime $p$.  If $\ba$ is not admissible, then $ \mathfrak S(\ba)=0$. The celebrated Hardy--Littlewood conjecture predicts  that
\begin{equation}\label{eq:HL}
\frac1N\sum_{n=1}^{N}\prod_{i=1}^{\ell}\Lambda(n+a_i) = \mathfrak S(\ba)+o(1).
\end{equation}

\begin{corollary}\label{cor:HL}
Assume the Hardy--Littlewood conjecture \eqref{eq:HL} holds. 
Then, uniformly for $\exp(\sqrt{\log N})\leq H\leq N$,
$\bh'\in[N]^{k-1}$, and $x\in X$,
\[
 \frac1{HN}\sum_{h_1=1}^{H}\sum_{n=1}^{N}
 w_{\ba}(n)f(T^{S_{(h_1,\bh')}(n)}x)
 \longrightarrow\mathfrak S(\ba)\nu(f).
\]
If $\ba$ is not admissible, this limit is unconditionally zero
for every choice of positive integers $H=H(N)$ and positive shifts
$\bh'=\bh'(N)$, without growth restrictions on either.
\end{corollary}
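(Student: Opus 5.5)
The corollary follows from Theorem~\ref{thm:weighted} combined with the Hardy--Littlewood asymptotic \eqref{eq:HL}. For the conditional part, I will invoke \eqref{eq:factorization}. It states that, uniformly for $\exp(\sqrt{\log N})\leq H\leq N$, $\bh'\in[N]^{k-1}$ and $x\in X$,
\[
 \frac1{HN}\sum_{h_1=1}^{H}\sum_{n=1}^{N}
 w_{\ba}(n)f(T^{S_{(h_1,\bh')}(n)}x)=\nu(f)C_{\ba}(N)+o(1).
\]
By definition $C_{\ba}(N)=\frac1N\sum_{n\le N}w_{\ba}(n)$, and under \eqref{eq:HL} this equals $\mathfrak S(\ba)+o(1)$. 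This error does not depend on $H$, $\bh'$ or $x$ at all. Since $|\nu(f)|\le\norm{f}_\infty$, the product $\nu(f)C_{\ba}(N)$ equals $\mathfrak S(\ba)\nu(f)+o(1)$. Adding the two $o(1)$ terms gives the claimed convergence with the stated uniformity.

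For the unconditional statement about non-admissible $\ba$, I will avoid Theorem~\ref{thm:weighted} and argue directly. Suppose $r_p(\ba)=p$ for some prime $p$. Then for every integer $n$ the residues $n+a_1,\dots,n+a_\ell$ cover all classes mod $p$, so some $n+a_i$ is divisible by $p$. For $w_{\ba}(n)\neq0$, every $n+a_i$ must be a positive prime power. Hence the divisible one has the form $p^m$ with $m\ge1$, so $n\in\{p^m-a_i: m\ge1,\ 1\le i\le \ell\}$.

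This exceptional set has at most $\ell(\log_p N+O(1))\ll \log N$ elements up to $N$. On it, $w_{\ba}(n)\le(\log(N+\max|a_i|))^\ell\ll(\log N)^\ell$. Therefore
\[
 \frac1N\sum_{n\le N}|w_{\ba}(n)|\ll \frac{(\log N)^{\ell+1}}{N}\to0.
\]
Bounding $|f(T^{S_{(h_1,\bh')}(n)}x)|\le\norm{f}_\infty$ trivially, the double average is at most $\norm{f}_\infty\frac1N\sum_{n\le N}|w_{\ba}(n)|$. This tends to $0$ regardless of $H$, $\bh'$ and $x$; positivity of the shifts only ensures that $\Omega(n+h_j)$ is defined.

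Finally, the limit $0$ agrees with $\mathfrak S(\ba)\nu(f)$, because the factor $1-r_p/p$ vanishes. No step here is a real obstacle. The only care needed is the small-$n$ effect: some $n+a_i$ may be $\le0$, but there $\Lambda$ is $0$, so these $n$ contribute nothing and the prime-power counting bound is unaffected.
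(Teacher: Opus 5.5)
Your proposal is correct and follows the paper's proof. The conditional part substitutes $C_{\ba}(N)=\mathfrak S(\ba)+o(1)$ into \eqref{eq:factorization}, and the non-admissible part bounds the double average trivially by $\norm f_\infty C_{\ba}(N)\ll(\log N)^{\ell+1}/N$; the only difference is that you re-derive this mass bound inline, whereas the paper cites \eqref{eq:nonadmissible} from Lemma~\ref{lem:mass}, whose proof uses the same prime-power counting argument.
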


On \eqref{mainthm_dyn_Chowla_avg_eqn} and \eqref{mainthm_prime_chowla_avg_eqn}, the weak averaged ergodic limits without inner absolute values are established in \cite[Theorems 1.3 and 1.4]{Wang} for the long averages over $1\leq h_1,\dots,h_k\leq N$. The lower bounds in the range of $H$ in Theorems~\ref{thm:integers}, \ref{thm:primes}  and \ref{thm:weighted} are not optimal. One may use the method in this article to improve $H\ge \log N$ in \eqref{eq:integer-strong} to  $H\ge \exp((\log\log N)^{\frac12+\ve})$ for any $\ve>0$, and improve $H\ge \exp(\sqrt{\log N})$ in \eqref{eq:prime-strong} and \eqref{eq:weighted-strong} to  $H\ge \exp((\log\log N)^{4+\ve})$ for any $\ve>0$.

To prove Theorems~\ref{thm:integers} and \ref{thm:primes}, we will use a Fourier analytic method of Qi and Zheng \cite{QiZheng2026}, which is used to establish a variant of \eqref{eq:BR} for irreducible cubic forms. For $\bh\in[N]^k$,
define
\[
 \rho^{\rm int}_{\bh}(r)=\frac1N
       \sum_{n\leq N}\one_{\{S_{\bh}(n)=r\}}, \, \rho^{\rm pr}_{\bh}(r)=\frac1{\pi(N)}
       \sum_{p\leq N}\one_{\{S_{\bh}(p)=r\}}, \,  \rho^{\rm wt}_{\bh}(r)=\frac1N
       \sum_{n\leq N}w_{\ba}(n)\one_{\{S_{\bh}(n)=r\}}.
\]

The first two are probability distributions. The third is a
finite nonnegative measure of mass $C_{\ba}(N)$; no division by
this mass is used. For $\star\in\{{\rm int},{\rm pr},{\rm wt}\}$, we have
\[
 A^\star_{N,\bh}f(x)=\sum_{r\geq0}\rho^\star_{\bh}(r)f(T^rx).
\]
To prove \eqref{eq:integer-strong} and \eqref{eq:prime-strong}, we mainly estimate
$$\sum_{r\in\Z}|\rho^\star_{\bh}(r)-\rho^\star_{\bh}(r-1)|.$$

For this goal, in Section~\ref{sec:distance}, we will prove a uniform pretentious-distance
estimate. Then in Section~\ref{sec:correlations}, we will establish uniform short-shift
correlation bounds related to $\Omega(n+h_1),\dots, \Omega(n+h_k)$. These bounds will be used to prove the averaged
shift invariance on $\rho^\star_{\bh}$ in Section~\ref{sec:invariance}. Finally, in Section~\ref{sec:dynamics} we will complete the proof of Theorems~\ref{thm:integers}, \ref{thm:primes} and \eqref{thm:weighted}.

\section{A uniform pretentious-distance estimate}\label{sec:distance}

We use $e(t)=\exp(2\pi it)$ and identify $\mathbb T=\mathbb R/\mathbb Z$
with $[-1/2,1/2]$ when convenient. Integration on $\mathbb T$ uses
usual Lebesgue measure. 
For $1$-bounded multiplicative functions $f,g$, put
\[
 \mathbb D(f,g;Y)^2
 =\sum_{p\leq Y}\frac{1-\Re(f(p)\overline{g(p)})}{p},
 \qquad
 \mathcal M(f;Y,Q)
 =\inf_{\substack{q\leq Q,\ \chi\bmod q\\ |t|\leq Y}}
 \mathbb D(f,\chi n^{it};Y)^2.
\]
Here $\chi n^{it}$ denotes $n\mapsto\chi(n)n^{it}$.
We have the triangle inequality for $\mathbb D$:
\begin{equation}\label{GS_triangle_inequality}
\mathbb D(1,fg;Y)\leq \mathbb D(1,f;Y) + \mathbb D(1,g;Y),
\end{equation}
see \cite{GS}.

\begin{lemma}\label{lem:oscillatory}
If $0<a\leq b$ and $t\in\mathbb R$, then
\[
 I(a,b;t)=\int_a^b\frac{e^{-itv}}v\,dv
\]
satisfies
\[
 |\Im I(a,b;t)|\leq3,\qquad
 -2\leq\Re I(a,b;t)\leq\log(b/a).
\]
\end{lemma}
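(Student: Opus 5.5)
The bound on $\Re I$ from above is immediate: $\Re I(a,b;t)=\int_a^b \cos(tv)\,dv/v\le\int_a^b dv/v=\log(b/a)$. The other two bounds are uniform in $a,b,t$, so I would reduce to a scale-free form. Since $\Im I=-\int_a^b\sin(tv)\,dv/v$ is odd in $t$ and $\Re I$ is even in $t$, and $t=0$ is trivial ($\Im I=0$, $\Re I=\log(b/a)\ge0$), we may assume $t>0$. Substituting $u=tv$ gives
\[
 I(a,b;t)=\int_{ta}^{tb}\frac{e^{-iu}}{u}\,du ,
\]
so it suffices to prove $|\int_\alpha^\beta \sin u\,du/u|\le 3$ and $\int_\alpha^\beta\cos u\,du/u\ge-2$ for all $0<\alpha\le\beta$.

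For the sine part, I would use the standard fact that $\mathrm{Si}(x)=\int_0^x\sin u\,du/u$ satisfies $0\le\mathrm{Si}(x)\le\mathrm{Si}(\pi)<2$ for $x\ge0$. This follows from the alternating decreasing structure of the integrals over the intervals $[m\pi,(m+1)\pi]$. Hence $|\mathrm{Si}(\beta)-\mathrm{Si}(\alpha)|\le 2\le3$. If one prefers to avoid $\mathrm{Si}$, split at $1$: the part on $[\alpha,\beta]\cap(0,1]$ is at most $\int_0^1 \sin u/u\,du\le1$ in absolute value. On the part inside $[1,\infty)$, integrate by parts:
\[
 \int_{c}^{\beta}\frac{\sin u}{u}\,du=\Big[-\frac{\cos u}{u}\Big]_c^\beta-\int_c^\beta\frac{\cos u}{u^2}\,du ,
\]
which is bounded by $1/c+1/\beta+1/c\le 2$ when $c\ge1$. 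The total is at most $3$, matching the constant in the statement.

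For the lower bound on the cosine part, split in the same way. On $[\alpha,\beta]\cap(0,1]$ we have $\cos u\ge\cos1>0$, so this contribution is nonnegative and can be dropped. On $[c,\beta]$ with $c=\max(\alpha,1)$, the same integration by parts gives
\[
 \int_c^\beta\frac{\cos u}{u}\,du=\Big[\frac{\sin u}{u}\Big]_c^\beta+\int_c^\beta\frac{\sin u}{u^2}\,du\ge -\frac1\beta-\frac1c-\int_c^\infty\frac{du}{u^2}\ge-\frac{1}{\beta}-\frac{2}{c}.
\]
This is only $\ge -3$ when $c=1$, so more care is needed. I would use $\beta\ge c$ to get the sharper bound
\[
 \frac{\sin\beta}{\beta}-\frac{\sin c}{c}\ge -\frac1c-\frac{\sin c}{c},
\]
or simply observe that $\int_c^\beta\sin u/u^2\,du\ge-\int_c^\infty du/u^2=-1/c$, while the boundary term is at least $-1/\beta-1/c$. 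The crude total of $-3$ is the main obstacle. To get $-2$, I would choose the split point $\pi/2$ instead of $1$, since $\cos\ge0$ on $(0,\pi/2]$. Then the bound becomes $-3/c\ge-6/\pi>-2$, which gives the claim. With split point $\pi/2$, the sine estimate also improves: the small part is at most $\int_0^{\pi/2}du=\pi/2$ and the large part is at most $3/c\le 6/\pi$, for a total below $3$.
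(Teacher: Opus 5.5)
Your proof is correct along its main line, but it takes a different route from the paper. The paper splits at $1$ for both integrals. After integrating by parts on $[u,v]\subset[1,\infty)$, it uses the exact value $\int_u^v s^{-2}\,ds=1/u-1/v$. The $-1/v$ cancels the boundary term $1/v$, so both $|\int_u^v \sin s/s\,ds|$ and $|\int_u^v\cos s/s\,ds|$ are at most $2/u\le 2$, which yields exactly $1+2=3$ and $-2$.

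You instead handle the sine part with the classical bound $0\le\mathrm{Si}\le\mathrm{Si}(\pi)<2$. This gives the better constant $2$, but it imports a fact about the sine integral that needs its own alternating-series argument. For the cosine part you move the split to $\pi/2$, so that the cruder bound $-3/c\ge-6/\pi$ still clears $-2$. Both steps are valid. The ``main obstacle'' you ran into comes only from replacing $\int_c^\beta u^{-2}\,du$ by $\int_c^\infty u^{-2}\,du$. With the exact integral, the split at $1$ works directly for both parts.

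Two side remarks are false as written and should be corrected or removed. They do not affect your proof, because the $\mathrm{Si}$ route carries the sine bound.
\begin{itemize}
\item In the ``avoid $\mathrm{Si}$'' argument, $1/c+1/\beta+1/c\le2$ fails for $c\ge1$; take $\beta=c=1$. It only gives $\le3$, hence a total of $4$ rather than $3$. It becomes true, as $2/c$, only after the cancellation described above.
\item The closing claim that splitting the sine integral at $\pi/2$ gives a total below $3$ is false: $\pi/2+6/\pi\approx3.48$. With the sharp bound $2/c$ one gets $\pi/2+4/\pi\approx2.84$, which would be fine.
\end{itemize}
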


\begin{proof}
If $t=0$, then $I(a,b;0)=\log(b/a)$ is real and nonnegative,
so all three assertions follow immediately.
Suppose that $t\neq0$, and write
\[
 A=|t|a,\qquad B=|t|b,\qquad \sigma=\operatorname{sgn}(t).
\]
The substitution $s=|t|v$ gives
\[
 I(a,b;t)=\int_A^B\frac{\cos s}{s}\,ds
          -i\sigma\int_A^B\frac{\sin s}{s}\,ds.
\]
We estimate the two real integrals uniformly in $0<A\leq B$.

For $1\leq u\leq v$, integration by parts gives
\[
 \int_u^v\frac{\sin s}{s}\,ds
 =\left[-\frac{\cos s}{s}\right]_u^v
   -\int_u^v\frac{\cos s}{s^2}\,ds.
\]
By the triangle inequality,
we have
\[
 \left|\int_u^v\frac{\sin s}{s}\,ds\right|
 \leq\frac1u+\frac1v+\int_u^v\frac{ds}{s^2}
 =\frac2u\leq2.
\]
Likewise,
\[
 \int_u^v\frac{\cos s}{s}\,ds
 =\left[\frac{\sin s}{s}\right]_u^v
   +\int_u^v\frac{\sin s}{s^2}\,ds,
\]
so
\[
 \left|\int_u^v\frac{\cos s}{s}\,ds\right|
 \leq\frac1u+\frac1v+\int_u^v\frac{ds}{s^2}
 =\frac2u\leq2.
\]

On $(0,1]$, we have $|\sin s/s|\leq1$ and $\cos s/s\geq0$.
The sine integral over any subinterval of $(0,1]$  has
absolute value at most $1$, while the cosine integral over that
subinterval is nonnegative. Split $[A,B]$ at $1$, omitting either portion if it is empty.
The preceding estimates give
\[
 \left|\int_A^B\frac{\sin s}{s}\,ds\right|\leq1+2=3,
 \qquad
 \int_A^B\frac{\cos s}{s}\,ds\geq-2.
\]
Finally, the pointwise inequality $\cos(tv)\leq1$ yields
\[
 \Re I(a,b;t)=\int_a^b\frac{\cos(tv)}v\,dv
 \leq\int_a^b\frac{dv}{v}=\log(b/a).
\]
These bounds are independent of $a,b,t$, as required.
\end{proof}

We record explicitly the standard character estimate used below.
Let
\[
 V_t=\exp\left((\log(3+|t|))^{2/3}
                       (\log\log(3+|t|))^{1/3}\right).
\]
For a character $\psi$ modulo $q$, by \cite[(4.4)]{Koukoulopoulos2013},  the conditions
\begin{equation}\label{eq:rough-conditions}
 2\le y\le Y,\qquad y\ge qV_t,\qquad |t|\ge1/\log y
\end{equation}
imply
\begin{equation}\label{eq:rough-character}
 \Re\sum_{y<p\le Y}\frac{\psi(p)p^{-it}}p=O(1),
\end{equation}
uniformly in these parameters, including principal characters.

For $n\in\N$, define $u_\theta(n)=e(\theta\Omega(n))$.
The identity $\Omega(mn)=\Omega(m)+\Omega(n)$ gives
$u_\theta(mn)=u_\theta(m)u_\theta(n)$ for all positive integers
$m,n$. Thus $u_\theta$ is completely multiplicative and has
absolute value $1$ on $\N$. We extend $u_\theta$ by zero to
nonpositive integers when it occurs in sums over $\Z$.

\begin{lemma}\label{lem:full-distance}
There are absolute constants $c,C>0$ such that, for all sufficiently
large $Y$ and every $|\theta|\leq1/2$,
\begin{equation}\label{eq:full-distance}
 \inf_{\substack{q\leq(\log Y)^{1/125},\ \chi\bmod q\\ |t|\leq Y}}
 \mathbb D(u_\theta,\chi n^{it};Y)^2
 \geq c\theta^2\log\log Y-C.
\end{equation}
\end{lemma}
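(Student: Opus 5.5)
The plan is to bound the sum over primes in $(y,Y]$ directly, for a cutoff $y$ that is a small power of $\log Y$ in the exponent, and to treat principal and non-principal characters separately.

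\textbf{Setup.} Fix $q\leq(\log Y)^{1/125}$, a character $\chi\bmod q$ and $|t|\leq Y$. Since $u_\theta(p)=e(\theta)$ for every prime $p$,
\[
 \mathbb D(u_\theta,\chi n^{it};Y)^2=\sum_{p\leq Y}\frac{1-\Re\bigl(e(\theta)\overline{\chi(p)}p^{-it}\bigr)}p .
\]
Every summand is nonnegative. So I discard the primes $p\leq y$, with $y=\exp((\log Y)^{1/10})$. This leaves
\[
 \sum_{y<p\leq Y}\frac1p-\Re\bigl(e(\theta)S\bigr),\qquad S=\sum_{y<p\leq Y}\frac{\overline{\chi(p)}p^{-it}}p .
\]
By Mertens' theorem the first term equals $\tfrac{9}{10}\log\log Y+O(1)$. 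The whole task is then to show
\[
 \Re\bigl(e(\theta)S\bigr)\leq\cos(2\pi\theta)\sum_{y<p\leq Y}\frac1p+O(1).
\]
Granting this, the distance squared is at least $(1-\cos 2\pi\theta)\tfrac9{10}\log\log Y-O(1)$. Since $1-\cos2\pi\theta\geq 8\theta^2$ for $|\theta|\leq1/2$, this gives the lemma with $c=\tfrac{36}{5}$ and an absolute $C$.

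\textbf{Evaluating $S$.} Since $q\leq(\log Y)^{1/125}$ is tiny compared with $\log y=(\log Y)^{1/10}$, the prime number theorem in arithmetic progressions applies throughout $(y,Y]$. I use it in Siegel--Walfisz form (the ineffective constant is harmless for an absolute $O(1)$), together with partial summation with $v=\log p$. Write $\delta_\chi=1$ if $\chi$ is principal and $0$ otherwise. The expected output is
\[
 S=\delta_\chi\int_{\log y}^{\log Y}\frac{e^{-itv}}{v}\,dv+O(1)=\delta_\chi I(\log y,\log Y;t)+O(1),
\]
uniformly in $|t|\leq Y$. For $|t|$ large, or when $\chi$ is non-principal, I would instead invoke \eqref{eq:rough-character}. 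This is legitimate because $y\geq qV_t$ whenever $|t|\leq\exp(c'(\log Y)^{1/10})$, which covers a substantial range. For the remaining huge $t$, I would use the Vinogradov--Korobov zero-free region directly.

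\textbf{Conclusion, and the main obstacle.} If $\chi$ is non-principal, then $|S|=O(1)$, which is stronger than required since $\cos(2\pi\theta)\sum 1/p\geq-\sum1/p$ is irrelevant; indeed the distance is then $\geq\tfrac9{10}\log\log Y-O(1)$. If $\chi$ is principal, Lemma~\ref{lem:oscillatory} gives $|\Im I|\leq3$ and $-2\leq\Re I\leq\log(\log Y/\log y)=\sum_{y<p\leq Y}1/p+O(1)$. Then
\[
 \Re\bigl(e(\theta)S\bigr)=\cos(2\pi\theta)\Re I+\sin(2\pi\theta)\Im I+O(1).
\]
When $\cos 2\pi\theta\geq0$, this is at most $\cos(2\pi\theta)\sum1/p+O(1)$. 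When $\cos2\pi\theta<0$, it is at most $2+3+O(1)$, which is even better.

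The step I expect to be the main obstacle is the uniform evaluation of $S$. It must hold simultaneously for all $|t|\leq Y$ and all $\chi$ with modulus $q\leq(\log Y)^{1/125}$. In particular, the \emph{imaginary} part of $S$ must be controlled, because \eqref{eq:rough-character} only bounds the real part. That is why I route the principal case through the explicit integral and Lemma~\ref{lem:oscillatory}, rather than through \eqref{eq:rough-character}. In the non-principal case I would need the full complex bound $|S|=O(1)$. I would obtain it by applying \eqref{eq:rough-character} to both $\overline\chi$ and the twist $t\mapsto -t$, or, failing that, directly from Siegel--Walfisz.
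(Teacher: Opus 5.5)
The small-$t$ part of your plan (Siegel--Walfisz, partial summation, Lemma~\ref{lem:oscillatory}, and a split on the sign of $\cos 2\pi\theta$) is essentially the paper's case $|t|\le 1$, and it is fine.

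\textbf{The gap is the large-$t$ range.} Your strategy needs the \emph{complex} estimate $S=\delta_\chi I(\log y,\log Y;t)+O(1)$ for all $|t|\le Y$, with the fixed cutoff $y=\exp((\log Y)^{1/10})$. Two things go wrong.

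First, for $|t|$ of size $Y$ one has $\log V_t\asymp(\log Y)^{2/3}(\log\log Y)^{1/3}$, which is far larger than $\log y$. The Vinogradov--Korobov region at height $t$ has width only about $1/\log V_t$. It therefore controls the primes $p>qV_t$ but says nothing about the primes in $(y,qV_t]$, and their contribution is not known to be $O(1)$. So ``use Vinogradov--Korobov directly'' does not cover this range.

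Second, \eqref{eq:rough-character} bounds only $\Re\sum\psi(p)p^{-it}/p$. But $\Re(e(\theta)S)=\cos(2\pi\theta)\Re S-\sin(2\pi\theta)\Im S$ also needs $\Im S$. Your proposed fix does not work. Applying \eqref{eq:rough-character} to $\overline\psi$ and $-t$ gives the real part of the complex conjugate, which is the same quantity $\Re S$ again. Siegel--Walfisz with partial summation loses a factor $1+|t|$, so it only reaches $|t|\le(\log y)^{O(1)}$.

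\textbf{How the paper avoids this.} The first problem is repaired by discarding more primes when $t$ is large. The paper uses $y=\exp((\log Y)^{3/4})$ for $|t|\ge1$, so that $y\ge qV_s$ for all $|s|\le 100Y$, while $\sum_{y<p\le Y}1/p$ is still $\frac14\log\log Y+O(1)$.

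For the second problem, the key idea is to never touch the imaginary part:
\begin{itemize}
\item From the real-part bound alone one gets $\mathbb D(1,\psi n^{is};Y)^2\ge\mathcal L/4-O(1)$ for every $\psi\bmod q$ and $1\le|s|\le100Y$.
\item Choose $1\le a\le100$ with $\|a\theta\|_{\mathbb R/\mathbb Z}\le1/100$, so that $u_\theta^a$ pretends to be $1$.
\item Conclude with the pretentious triangle inequality $a\,\mathbb D(u_\theta,\chi n^{it};Y)\ge\mathbb D(1,\chi^a n^{iat};Y)-\mathbb D(u_\theta^a,1;Y)$.
\end{itemize}
The alternative is to prove a genuinely complex bound $\sum_{y<p\le Y}\overline{\chi(p)}p^{-1-it}=O(1)$ for $y\ge qV_t$ from zero-free regions of $L(s,\chi)$. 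That is true but more than \eqref{eq:rough-character} provides, and you would have to establish it.

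\textbf{A minor point.} Your target inequality $\Re(e(\theta)S)\le\cos(2\pi\theta)\sum_{y<p\le Y}1/p+O(1)$ is false when $\cos 2\pi\theta<0$; take $\chi$ non-principal and $\theta=1/2$. What your case analysis actually yields is the factor $1-\max(\cos 2\pi\theta,0)\ge 4\theta^2$, not $1-\cos 2\pi\theta$. Your constant $c=36/5$ must therefore be halved.
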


\begin{proof}
Put $\mathcal L=\log\log Y$ and $z=e(\theta)$.
Fix a modulus $q\leq(\log Y)^{1/125}$, a character $\chi$
modulo $q$, and a real $t$ with $|t|\leq Y$. We consider two cases: $|t|\leq1$ and $1<|t|\leq Y$.

Suppose $|t|\leq1$.
Set $W=\exp((\log Y)^{1/2})$ and write
\[
 A_\chi(v)=\sum_{p\leq v}\overline{\chi(p)},\qquad
 \kappa_\chi=\mathbf1_{\chi=\chi_0}.
\]
The Siegel--Walfisz theorem in character form
\cite[Chapter 22]{Davenport} gives
\[
 A_\chi(v)=\kappa_\chi\Li(v)+E_\chi(v),\qquad
 |E_\chi(v)|\ll_{A,B}\frac{v}{(\log v)^A}
 \quad(q\leq(\log v)^B).
\]
For $v\geq W$,
\[
 q\leq(\log Y)^{1/125}=(\log W)^{2/125}
 \leq(\log v)^{2/125}.
\]
Thus we may fix, for example, $A=3$ and $B=2/125$; the
error estimate is then uniform in all parameters under consideration.
By definition,
\begin{align*}
 \mathbb D(u_\theta,\chi n^{it};Y)^2
 =\sum_{p\leq Y}\frac1p
      -\Re\left(z\sum_{p\leq Y}
                         \frac{\overline{\chi(p)}}{p^{1+it}}\right)=\Re\sum_{p\leq Y}\frac{1-z\overline{\chi(p)}p^{-it}}p.
\end{align*}

Let $w_t(v)=v^{-1-it}$, so that
$w_t'(v)=-(1+it)v^{-2-it}$.
Partial summation over the half-open interval $(W,Y]$ gives
\begin{align*}
 \sum_{W<p\leq Y}\overline{\chi(p)}w_t(p)
 &=A_\chi(Y)w_t(Y)-A_\chi(W)w_t(W)
       -\int_W^Y A_\chi(v)w_t'(v)\,dv\\
 &=\kappa_\chi\int_W^Y\frac{v^{-1-it}}{\log v}\,dv
       +\mathcal E,
\end{align*}
where
\[
 \mathcal E=E_\chi(Y)w_t(Y)-E_\chi(W)w_t(W)
                 -\int_W^Y E_\chi(v)w_t'(v)\,dv.
\]
In the main term we used $\Li'(v)=1/\log v$ and integrated
by parts in the opposite direction. The error satisfies
\begin{align*}
 |\mathcal E|
 &\ll (\log Y)^{-3}+(\log W)^{-3}
          +(1+|t|)\int_W^Y\frac{dv}{v(\log v)^3}\\
 &\ll (\log W)^{-3}
          +(1+|t|)(\log W)^{-2}\ll1,
\end{align*}
uniformly for $|t|\leq1$.
Substituting $v=e^w$ in the main integral therefore yields
\begin{equation}\label{eq:small-height-primes}
 \sum_{W<p\leq Y}\frac{\overline{\chi(p)}}{p^{1+it}}
 =\kappa_\chi\int_{\log W}^{\log Y}\frac{e^{-itw}}w\,dw+O(1).
\end{equation}

Let $I$ denote the integral in \eqref{eq:small-height-primes}.
Since $\log W=(\log Y)^{1/2}$, Lemma~\ref{lem:oscillatory}
gives
\[
 -2\leq\Re I\leq\log\frac{\log Y}{\log W}
       =\frac{\mathcal L}{2},\qquad |\Im I|\leq3.
\]
Write $z=a+ib$, where $a=\cos(2\pi\theta)$ and $|b|\leq1$.
If $a\geq0$, then $\Re(zI)=a\Re I-b\Im I\leq a\mathcal L/2+3$.
If $a<0$, then $a\Re I\leq-2a\leq2$, so $\Re(zI)\leq5$.
Both cases imply
\[
 \Re(zI)\leq\max(a,0)\frac{\mathcal L}{2}+5.
\]
For a nonprincipal character the main term in
\eqref{eq:small-height-primes} is zero, so the same upper bound
holds for the real part of its prime sum multiplied by $z$.
Also, Mertens' formula gives
\[
 \sum_{W<p\leq Y}\frac1p
 =\log\log Y-\log\log W+O(1)=\frac{\mathcal L}{2}+O(1).
\]
Every term in $\mathbb D^2$ is nonnegative, since
$|z\overline{\chi(p)}p^{-it}|\leq1$. We may therefore restrict
the sum to $p>W$ and obtain
\begin{align}
 \mathbb D(u_\theta,\chi n^{it};Y)^2
 &\geq\sum_{W<p\leq Y}\frac1p
      -\Re\left(z\sum_{W<p\leq Y}
                         \frac{\overline{\chi(p)}}{p^{1+it}}\right)\notag\\
 &\geq\frac{\mathcal L}{2}\bigl(1-\max(\cos(2\pi\theta),0)\bigr)-O(1)\notag\\
 &\geq2\theta^2\mathcal L-O(1).
 \label{eq:small-height-distance}
\end{align}
To check the last step, when $|\theta|\leq1/4$ use
$\sin(\pi|\theta|)\geq2|\theta|$ to get
$1-\cos(2\pi\theta)=2\sin^2(\pi\theta)\geq8\theta^2$.
When $1/4\leq|\theta|\leq1/2$, we have $\cos(2\pi\theta) \leq 0$, the maximum with zero is zero,
and $1\geq4\theta^2$. Thus
$1-\max(\cos(2\pi\theta),0)\geq4\theta^2$ for all $|\theta|\leq1/2$.

Suppose now that $1\leq|t|\leq Y$, and set
$y=\exp((\log Y)^{3/4})$.
We shall first consider any character $\psi$ modulo $q$ and
any $s$ with $1\leq|s|\leq100Y$.
For such $q,s$,
\[
 \log(qV_s)
 \leq\frac1{125}\log\log Y
     +O\bigl((\log Y)^{2/3}(\log\log Y)^{1/3}\bigr)
 =o((\log Y)^{3/4}).
\]
Hence $qV_s\leq y$ for all sufficiently large $Y$, uniformly
in $q,s$. Moreover, $|s|\geq1\geq1/\log y$ and $2\leq y\leq Y$.
 Applying \eqref{eq:rough-character} to $\overline\psi$ and
using Mertens' formula once again give
\begin{align}
 \mathbb D(1,\psi n^{is};Y)^2
 &\geq\sum_{y<p\leq Y}\frac1p
       -\Re\sum_{y<p\leq Y}\frac{\overline{\psi(p)}p^{-is}}p\notag\\
 &=\log\log Y-\log\log y+O(1)
  =\frac{\mathcal L}{4}+O(1).
 \label{eq:character-repulsion}
\end{align}
This implies that
\[
 \mathbb D(1,\psi n^{is};Y) \geq \frac25\sqrt{\mathcal L},
\]
 once $Y$ is sufficiently large.

By Dirichlet's approximation theorem, there is  an integer
$a$ such that
\[
 1\leq a\leq100,\qquad
 \|a\theta\|_{\mathbb R/\mathbb Z}\leq\frac1{100}.
\]
Here \(\|x\|_{\mathbb R/\mathbb Z}
:=\min_{m\in\mathbb Z}|x-m|\).
Since $u_\theta(p)^a=e(a\theta)$ at every prime,
\begin{align*}
 \mathbb D(u_\theta^a,1;Y)^2
 &=(1-\cos(2\pi a\theta))\sum_{p\leq Y}\frac1p \leq\frac{2\pi^2}{100^2}(\mathcal L+O(1)).
\end{align*}
Here $1-\cos v\leq v^2/2, |v|\leq 1$ is applied after reducing $a\theta$
modulo $1$. Consequently,
$\mathbb D(u_\theta^a,1;Y)\leq(1/20)\sqrt{\mathcal L}$ for large $Y$.

Let $F= \overline{u_\theta} {\chi n^{it}}$. By the triangle inequality \eqref{GS_triangle_inequality}, we have the following power inequality
\[
 \mathbb D(u_\theta^a,\chi^a n^{iat};Y) =\mathbb D(1,F^a;Y) \le a\,\mathbb D(1,F;Y) = a\mathbb D(u_\theta,\chi n^{it};Y),
\]
and
\[
 \mathbb D(u_\theta^a,\chi^a n^{iat};Y) + \mathbb D(u_\theta^a,1;Y) =   \mathbb D(1,\overline{u}_\theta^a \chi^a n^{iat};Y) + \mathbb D(1, u_\theta^a;Y)  \geq\mathbb D(1,\chi^a n^{iat};Y).
\]

The character $\chi^a$ is again modulo $q$, and
$1\leq|at|\leq100Y$. It follows that
\begin{align*}
 a\mathbb D(u_\theta,\chi n^{it};Y)
 &\geq\mathbb D(u_\theta^a,\chi^a n^{iat};Y)\\
 &\geq\mathbb D(1,\chi^a n^{iat};Y)
          -\mathbb D(u_\theta^a,1;Y)\\
 &\geq\left(\frac25-\frac1{20}\right)\sqrt{\mathcal L}
  \geq\frac13\sqrt{\mathcal L}.
\end{align*}
As $a\leq100$, we conclude that
\[
 \mathbb D(u_\theta,\chi n^{it};Y)^2\geq\frac{\mathcal L}{90000}
 \qquad(1\leq|t|\leq Y).
\]
Since $\theta^2\leq1/4$, this is at least
$(4/90000)\theta^2\mathcal L$. Combining it with
\eqref{eq:small-height-distance} and taking that infimum proves \eqref{eq:full-distance}.
\end{proof}

For the rest of the paper put
\begin{equation}\label{eq:L-delta}
 L=\log\log(20N),\qquad\delta=L^{-2/5}.
\end{equation}
We always take $N$ large enough that $0<\delta\leq1/2$.
Lemma~\ref{lem:full-distance} implies
\begin{equation}\label{eq:moving-frequency-distance}
 \mathcal M(u_\theta;Y,Q)\gg L^{1/5}
\end{equation}
uniformly for $N\leq Y\leq20N$, $1\leq Q\leq(\log Y)^{1/125}$,
and $\delta\leq|\theta|\leq1/2$.

\section{Correlations averaged over short shifts}\label{sec:correlations}

\subsection{Integer correlations}

For fixed $N,k$ and $\bh\in\mathbb N^k$, write
\[
 P^{\rm int}_{\bh}(\theta)=\frac1N\sum_{n\leq N}
       \prod_{j=1}^ku_\theta(n+h_j),\qquad
 P^{\rm pr}_{\bh}(\theta)=\frac1{\pi(N)}\sum_{p\leq N}
       \prod_{j=1}^ku_\theta(p+h_j).
\]
Every summand has absolute value $1$. The triangle inequality
therefore gives $|P^{\rm int}_{\bh}(\theta)|\leq1$ and
$|P^{\rm pr}_{\bh}(\theta)|\leq1$ for every $\bh,\theta$.

\begin{proposition}\label{prop:integer-fourier}
For fixed $k\geq1$,
\begin{equation}\label{eq:integer-frequencies}
 \sup_{\log N\leq H\leq N}
 \sup_{\bh'\in[N]^{k-1}}
 \sup_{\delta\leq|\theta|\leq1/2}
 \E_{h_1\in[H]}|P^{\rm int}_{(h_1,\bh')}(\theta)|^2
 \ll_k\frac{\log L}{L}.
\end{equation}
\end{proposition}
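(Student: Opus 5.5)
We reduce the problem to a Matomäki--Radziwiłł--Tao type bound for short-interval averages of correlations of non-pretentious multiplicative functions. The input is \eqref{eq:moving-frequency-distance}, which shows that $u_\theta$ is non-pretentious uniformly for $\delta\le|\theta|\le1/2$.

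Write
\[
F_{\bh'}(n)=\prod_{j=2}^k u_\theta(n+h_j),
\]
a $1$-bounded sequence depending only on $n$ and the frozen shifts $\bh'$. Then
\[
P^{\rm int}_{(h_1,\bh')}(\theta)=\frac1N\sum_{n\le N}u_\theta(n+h_1)F_{\bh'}(n).
\]
Expanding the square gives
\[
\E_{h_1\in[H]}|P^{\rm int}|^2=\frac1{N^2}\sum_{n,m\le N}F_{\bh'}(n)\overline{F_{\bh'}(m)}\,\E_{h_1}u_\theta(n+h_1)\overline{u_\theta(m+h_1)}.
\]
This double expansion is lossy, so we use a dual formulation instead. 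By Cauchy--Schwarz in $n$ after a change of variables $n\mapsto n-h_1$ (the boundary cost is $O(H/N)$, which is negligible unless $H$ is comparable to $N$; in that regime handle it directly), it suffices to bound
\[
\E_{h_1\in[H]}|P|^2\le \sup_{|c_{h}|\le1}\Re\,\E_{h_1}c_{h_1}P_{(h_1,\bh')}=\frac1N\sum_{m}u_\theta(m)\,\Bigl(\E_{h_1}c_{h_1}F_{\bh'}(m-h_1)\Bigr)
\]
with $c_{h_1}=\overline{P_{(h_1,\bh')}}$. By Cauchy--Schwarz and the circle method in short intervals, this reduces to
\[
\frac1N\sum_{x\le N}\sup_{\alpha\in\Torus}\Bigl|\frac1H\sum_{x<m\le x+H}u_\theta(m)e(\alpha m)\Bigr|.
\]
This is the standard route of \cite{MRT}: the averaged correlation is bounded by the short-interval maximal exponential-sum average of $u_\theta$, uniformly in the other bounded factors, which is exactly why $\bh'$ may be arbitrary.

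Next we invoke the MRT twisted short-interval estimate, as refined by Matomäki--Radziwiłł--Tao in ``Fourier uniformity'' and Koukoulopoulos-type quantitative forms. For a $1$-bounded multiplicative $g$ and $H\ge\log N$ (more generally $H\ge (\log N)^{\epsilon}$ variants), it states
\[
\frac1N\sum_{x\le N}\sup_\alpha\Bigl|\frac1H\sum_{x<m\le x+H}g(m)e(\alpha m)\Bigr|\ll \exp\bigl(-c\,\mathcal M(g;N,Q)\bigr)+\frac{\log\log H}{\log H}+(\log N)^{-c}.
\]
Here minor arcs are handled by Daboussi--Kátai/bilinear (Ramaré) decompositions with the $\log\log H/\log H$ loss. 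Major arcs $\alpha\approx a/q$ with $q\le Q$ reduce to twists $g\bar\chi$ in short intervals, and these are controlled by the MR theorem via $\mathcal M$.

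Finally, plug in \eqref{eq:moving-frequency-distance}. With $Q=(\log N)^{1/125}$ it gives $\mathcal M\gg L^{1/5}$, so the pretentious term is $\exp(-cL^{1/5})\ll \log L/L$. For $H\ge\log N$ we have $\log H\ge \log\log N\asymp L$, hence $\log\log H/\log H\ll \log L/L$ (monotone in $H$). The term $(\log N)^{-c}$ is negligible. Since every bound is uniform in $\theta$ in the stated range, in $\bh'$, and in $H\in[\log N,N]$, the suprema in \eqref{eq:integer-frequencies} are harmless.

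The main obstacle is the short-interval Fourier-uniformity step with $H$ as small as $\log N$. The clean MRT result needs $H\to\infty$ and gives $\log\log H/\log H$ only in the non-twisted (Fourier-free) version. For the $\sup_\alpha$ version with such small $H$, I would first try to avoid $\sup_\alpha$ entirely. The idea is to use instead the ``$k$-point averaged'' argument of \cite{MRT} directly, i.e.\ the bound
\[
\sum_{h\le H}\Bigl|\sum_n g(n+h)F(n)\Bigr|\ll \frac{\log\log H}{\log H}HN+\dots,
\]
which is proved in \cite{MRT} for arbitrary bounded $F$ via the Ramaré identity and large-sieve arguments. This would need checking that the non-pretentious input is only $\mathcal M(g;N,Q)$ with $Q$ a small power of $\log N$, as supplied by Lemma~\ref{lem:full-distance}.
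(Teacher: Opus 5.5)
Your main line breaks at the maximal estimate you invoke. That is a bound of the shape $e^{-c\mathcal M}+\frac{\log\log H}{\log H}+(\log N)^{-c}$ for $\frac1N\sum_{x\le N}\sup_{\alpha}\bigl|\frac1H\sum_{x<m\le x+H}u_\theta(m)e(\alpha m)\bigr|$, with the supremum \emph{inside} the average over $x$. This is not what \cite{MRT} prove. Their Theorem~1.7, quoted in Lemma~\ref{lem:local-fourier}, controls $\sup_{\alpha}\int_0^Y\bigl|\sum_{x\le n\le x+J}g(n)e(\alpha n)\bigr|\,dx$, which is one frequency for all short intervals at once. Letting $\alpha$ depend on $x$ is the local Fourier uniformity problem. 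As far as I know, it is established only for $H$ far larger than any power of $\log N$ (for instance $H\ge N^{\ve}$ in the Matom\"aki--Radziwi\l\l--Tao Fourier-uniformity paper you allude to). For $H\asymp\log N$ it is open, and certainly not known with a $\log\log H/\log H$ rate. Your pointwise-in-$x$ Plancherel/Cauchy--Schwarz step genuinely lands on this sup-inside quantity, so the argument does not close there. Your last paragraph also misdiagnoses the obstacle. The sup-outside twisted bound holds for all $10\le H\le X$ and already carries the $\log\log H/\log H$ term; the only issue is where the supremum sits.

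Your fallback is exactly the paper's proof, and it works. Apply \cite[Theorem 1.6, (1.11)]{MRT} with $g_1=F_{\bh'}$, an arbitrary $1$-bounded function, and $g_2=u_\theta$. This gives $\E_{h_1\in[H]}|P^{\rm int}_{(h_1,\bh')}(\theta)|\ll e^{-M/80}+\frac{\log\log H}{\log H}+(\log N)^{-1/3000}$ for $10\le H\le N$, where $M=\mathcal M(u_\theta;10N,Q)$ and $Q=\min\{(\log N)^{1/125},(\log H)^{20}\}$. The check you flag is immediate: $Q\le(\log 10N)^{1/125}$, so \eqref{eq:moving-frequency-distance} gives $M\gg L^{1/5}$. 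Then $|P|^2\le|P|$ finishes, with no boundary issue for $H$ near $N$.

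If you want to keep your dual formulation, it can be repaired using the sup-outside estimate alone. After Cauchy--Schwarz in $n$ with $c_h=\overline{P^{\rm int}_{(h,\bh')}(\theta)}$, bound $\sum_n\bigl|\sum_h c_hu_\theta(n+h)\bigr|^2\le\int_{\Torus}|\widehat{u}|^2|\widehat{c}|^2$, with $u_\theta$ truncated to $[1,N+H]$. Cut $\Torus$ into arcs $I_j$ of length $1/H$, and use Gallagher's lemma on each arc together with $\sum_j\max_{I_j}|\widehat{c}|^2\ll H\|c\|_2^2$. Since $\|c\|_2^2=H\,\E_{h_1}|P|^2$, this self-improves to $\E_{h_1}|P|^2\ll\eta+H/N$, where $\eta$ is the error term of \cite[Theorem 1.7]{MRT}. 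Covering $[H]$ by blocks of length about $N/\log N$, as in Proposition~\ref{prop:prime-fourier}, removes the $H/N$ term. This route needs only the input already used in Lemma~\ref{lem:local-fourier}.
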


\begin{proof}
First let $10\leq H\leq N$, fix $\theta\in\mathbb T$ and
$\bh'=(h_2,\ldots,h_k)\in[N]^{k-1}$, and put
\[
 B_{\theta,\bh'}(n)=\prod_{j=2}^ku_\theta(n+h_j).
\]
For $k=1$, this is the empty product and equals $1$.
In all cases $|B_{\theta,\bh'}(n)|=1$. This function need not
be multiplicative.

We apply \cite[Theorem 1.6, (1.11)]{MRT} on the Elliott's conjecture on average by taking
\[
 g_1=B_{\theta,\bh'},\qquad g_2=u_\theta,\qquad j_0=2,\quad X=N, \quad A=1.
\]
With
\[
 Q=\min\{(\log N)^{1/125},(\log H)^{20}\},\qquad
 M=\mathcal M(u_\theta;10N,Q),
\]
the cited estimate gives
\[
 \frac1H\sum_{h_1=1}^H
 \left|\frac1N\sum_{n=1}^N
       B_{\theta,\bh'}(n)u_\theta(n+h_1)\right|
 \ll e^{-M/80}+\frac{\log\log H}{\log H}
                    +(\log N)^{-1/3000}.
\]
That is,
\begin{equation}\label{eq:integer-first-fourier}
 \E_{h_1\in[H]}|P^{\rm int}_{(h_1,\bh')}(\theta)|
 \ll e^{-M/80}+\frac{\log\log H}{\log H}
                    +(\log N)^{-1/3000}.
\end{equation}
The implied constant is independent of the bounded function
$B_{\theta,\bh'}$. Thus the estimate is uniform in $\bh'$ and
$\theta$, including when these parameters depend on $N$.

Now assume $\log N\leq H\leq N$ and
$\delta\leq|\theta|\leq1/2$. By
\eqref{eq:moving-frequency-distance}, $M\gg L^{1/5}$.
Also,
\[
 \frac{\log\log H}{\log H}\ll\frac{\log L}{L},
 \qquad e^{-cL^{1/5}}+(\log N)^{-1/3000}
           \ll\frac{\log L}{L}.
\]
Finally, $|P^{\rm int}_{(h_1,\bh')}|\leq1$ implies
\[
 \E_{h_1\in[H]}|P^{\rm int}_{(h_1,\bh')}(\theta)|^2
 \leq\E_{h_1\in[H]}|P^{\rm int}_{(h_1,\bh')}(\theta)|
 \ll\frac{\log L}{L}.
\]
Taking the stated suprema proves \eqref{eq:integer-frequencies}.
No averaging over $h_2,\ldots,h_k$ is used.
\end{proof}

\subsection{Weights and the common fourth-moment estimate}

\begin{lemma}\label{lem:mass}
For all sufficiently large $N$, depending on $\ba$, one has
\begin{equation}\label{eq:mass-bound}
 0\leq C_{\ba}(N)\ll_{\ell,\ba}1,\qquad
 \frac1N\sum_{n\leq\sqrt N}w_{\ba}(n)
 \ll_{\ell,\ba}N^{-1/2}(\log N)^\ell.
\end{equation}
If $\ba$ is not admissible, then
\begin{equation}\label{eq:nonadmissible}
 C_{\ba}(N)\ll_{\ell,\ba}\frac{(\log N)^{\ell+1}}N.
\end{equation}
\end{lemma}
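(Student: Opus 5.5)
The plan is to prove all three bounds by elementary counting, using only the crude estimate $\Lambda(m)\le\log m$ for $m\ge2$, the vanishing $\Lambda(m)=0$ for $m\le1$ (the convention fixed in the introduction), and the fact that $\Lambda$ is supported on prime powers.

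\emph{The bound $C_{\ba}(N)\ll_{\ell,\ba}1$.} Nonnegativity of $C_{\ba}(N)$ is clear, since $\Lambda\ge0$. For the upper bound we cannot simply bound each factor by $\log$, since that only gives $(\log N)^\ell$. Instead, let $a^*=\max_i a_i$ and use $\Lambda(n+a_i)\le\log(2N)$ for all $i\ne1$ when $n\le N$, say, keeping the single factor $\Lambda(n+a_1)$. That still loses logs, so the cleaner route is the following. For $\ell=1$ the claim is Chebyshev's bound $\sum_{m\le N+|a_1|}\Lambda(m)\ll N$. For $\ell\ge2$, with distinct shifts, we need a sieve upper bound: by the Brun (or Selberg) upper-bound sieve, the number of $n\le N$ for which all $n+a_i$ are prime is $\ll_{\ell,\ba}N(\log N)^{-\ell}$. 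Each such $n$ contributes at most $(\log 2N)^\ell$, so primes give $O(N)$. The contribution of $n$ for which some $n+a_i$ is a proper prime power $p^j$ with $j\ge2$ is at most $(\log 2N)^\ell$ times the number of such $n$, which is $\ll_\ell \sqrt N\log N$. This is negligible. Hence $C_{\ba}(N)\ll_{\ell,\ba}1$. The sieve upper bound is the only nonelementary input, and it is standard, so I would simply cite it.

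\emph{The bound for $n\le\sqrt N$.} This is trivial: there are at most $\sqrt N$ terms, each at most $(\log(\sqrt N+\max|a_i|))^\ell\ll_{\ba}(\log N)^\ell$. Dividing by $N$ gives $N^{-1/2}(\log N)^\ell$.

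\emph{The non-admissible case.} If $\ba$ is not admissible, choose a prime $p$ with $r_p(\ba)=p$. Then for every $n$ some $n+a_i\equiv0\pmod p$. For $w_{\ba}(n)\ne0$, that $n+a_i$ must be a power of $p$ (and at least $2$, since $\Lambda$ vanishes at nonpositive integers and at $1$). The number of $n\le N$ with $n+a_i=p^j$ for some $i$ and some $j\ge1$ is at most $\ell\bigl(\log(N+\max|a_i|)/\log 2+1\bigr)\ll_{\ell,\ba}\log N$. Each such $n$ contributes at most $(\log 2N)^\ell$, so
\[
\sum_{n\le N}w_{\ba}(n)\ll_{\ell,\ba}(\log N)^{\ell+1},
\]
which gives \eqref{eq:nonadmissible} after dividing by $N$. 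Here $p\le\ell$ is fixed by $\ba$.

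The main obstacle is the first bound for $\ell\ge2$, where the sieve upper bound is genuinely needed. Everything else is counting.
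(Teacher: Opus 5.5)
Your proof is correct and follows the paper's argument essentially step for step. It uses the pointwise bound $w_{\ba}(n)\le(\log 2N)^\ell$ for the range $n\le\sqrt N$. In the non-admissible case, the prime $p_0$ with $r_{p_0}(\ba)=p_0$ forces some $n+a_i$ to be a power of $p_0$. For $C_{\ba}(N)\ll1$, it combines the fixed-tuple upper-bound sieve with an $O(\sqrt N\log N)$ count of proper prime powers, exactly as the paper does.

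The only difference is organizational. You apply the sieve bound to every tuple at once, which is legitimate since for non-admissible tuples the count is trivially $O(1)$. The paper instead settles the non-admissible case first via \eqref{eq:nonadmissible}. The abandoned ``keep a single factor'' idea at the start of your first paragraph should be removed.
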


\begin{proof}
Take $N\geq2\max_i|a_i|+2$. Then $n+a_i\leq2N$ when $n\leq N$,
and, using the zero extension of $\Lambda$,
\begin{equation}\label{eq:weight-pointwise}
 0\leq w_{\ba}(n)\leq(\log(2N))^\ell.
\end{equation}
This immediately proves the second assertion of
\eqref{eq:mass-bound}.

If the tuple is not admissible, fix a prime $p_0$ for which
$r_{p_0}(\ba)=p_0$. For every $n$, at least one $n+a_i$ is
divisible by $p_0$. A nonzero weight then forces
$n+a_i=p_0^j$ for some $j\geq1$. There are only
$O_{\ell,\ba}(\log N)$ such choices of $n\leq N$.
Applying \eqref{eq:weight-pointwise} proves
\eqref{eq:nonadmissible}, and hence also the first assertion
of \eqref{eq:mass-bound} in this case.
We may therefore assume for the rest of the proof that $\ba$
is admissible.

The number of proper prime powers $p^j\leq2N$, $j\geq2$, is at most
\[
 \sum_{2\leq j\leq\log(2N)/\log2}(2N)^{1/j}
 \ll N^{1/2}\log N.
\]
Consequently the contribution to $C_{\ba}(N)$ from those $n$
for which at least one $n+a_i$ is a proper prime power is
$O_{\ell,\ba}(N^{-1/2}(\log N)^{\ell+1})$.
Every other nonzero summand has all $n+a_i$ prime.
The fixed-tuple upper-bound sieve
\cite[Theorem 7.16]{FI}, also recorded in
\cite[Lemma 2.3]{LT}, gives
\[
 \#\{n\leq N:n+a_i\text{ is prime for all }i\}
 \ll_{\ell,\ba}\frac{N}{(\log N)^\ell}.
\]
Multiplying by the upper bound
in \eqref{eq:weight-pointwise} proves $C_{\ba}(N)\ll_{\ell,\ba}1$.
\end{proof}

By the Sathe-Selberg theorem, we have the following estimate.  

\begin{lemma}\label{lem:exp-moment}
For every fixed $1<z<2$ and $M\geq2$,
\begin{equation}\label{eq:exp-moment}
 \sum_{m\leq M}z^{\Omega(m)}\ll_z M(\log(2M))^{z-1}.
\end{equation}
\end{lemma}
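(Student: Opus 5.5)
We need to prove that $\sum_{m\le M} z^{\Omega(m)}\ll_z M(\log 2M)^{z-1}$ for fixed $1<z<2$. The plan is to write $z^{\Omega}$ as a Dirichlet convolution $z^{\Omega}=g*\tau_z$, where $g$ is a multiplicative correction supported essentially on squarefull contributions. A Rankin/Shiu-type bound on the generalized divisor sum, together with absolute convergence of the corresponding series for $g$, then gives the estimate. This avoids quoting the full Sathe--Selberg asymptotic, though one could also simply cite it: Selberg's theorem gives $\sum_{m\le M}z^{\Omega(m)}=G(z)M(\log M)^{z-1}(1+o(1))$ for $0<z<2$, where $G(z)$ involves $\prod_p(1-z/p)^{-1}(1-1/p)^z$. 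That product converges precisely because $z<2\le p$, and this is the reason for the restriction $z<2$.

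\textbf{Step 1 (the convolution).} Let $\tau_z$ be the multiplicative function with $\sum_n \tau_z(n)n^{-s}=\zeta(s)^z$, so $\tau_z(p^j)=\binom{z+j-1}{j}\ge0$. Define $g$ by
\[
\sum_n g(n)n^{-s}=\prod_p(1-z p^{-s})^{-1}(1-p^{-s})^{z}.
\]
Then $g(p)=z-z=0$, and for $j\ge2$ one has $|g(p^j)|\ll_z (z/p)^{j}\cdot C_z^j$ with a geometric factor. This holds because the local factor is holomorphic in $|p^{-s}|<1/z$ and $z/2<1$. Hence
\[
\sum_n |g(n)|n^{-\sigma}<\infty\quad\text{for }\sigma>\max\{\tfrac12,\log z/\log 2\}.
\]
In particular $\sum_n|g(n)|/n<\infty$. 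The $p=2$ factor needs a little care, since $z/2<1$ only barely when $z$ is close to $2$. Still, $(1-z2^{-s})^{-1}$ converges at $s=1$ because $z<2$, so the series $\sum|g(n)|n^{-1}(\log 2n)^{A}$ also converges for any fixed $A$: the coefficients $g(2^j)$ decay like $(z/2)^j j^{O(1)}$.

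\textbf{Step 2 (divisor-type bound).} Since $\tau_z\ge0$ is multiplicative with $\tau_z(p)=z$ and $\tau_z(p^j)\le C_z^j$ for all $j$, the standard bound (for instance Shiu, or elementarily via Rankin's trick, or via $\sum_{n\le x}\tau_z(n)\log n\le \sum_{p^j m\le x}\tau_z(m)\,\tau_z(p^j)\log p^j$-type Chebyshev arguments) gives
\[
\sum_{n\le x}\tau_z(n)\ll_z x(\log 2x)^{z-1}.
\]
The cleanest route is the Chebyshev--Hardy argument. Write $\log n=\sum_{d\mid n}\Lambda(d)$, control $\tau_z$ at prime powers, and use Mertens to get $\sum_{n\le x}\tau_z(n)\log n\le z x S(x)+O(\dots)$, where $S(x)=\sum_{n\le x}\tau_z(n)/n\ll(\log 2x)^z$. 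The bound on $S(x)$ follows from $S(x)\le\prod_{p\le x}\sum_j\tau_z(p^j)p^{-j}\ll\exp(z\log\log x+O(1))$.

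\textbf{Step 3 (combine).} We have
\[
\sum_{m\le M}z^{\Omega(m)}=\sum_{d\le M}g(d)\sum_{n\le M/d}\tau_z(n)\ll \sum_{d\le M}|g(d)|\frac{M}{d}(\log 2M)^{z-1}\ll_z M(\log 2M)^{z-1},
\]
using $z-1>0$, so that $\log(2M/d)\le\log 2M$, together with Step 1.

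The main obstacle is Step 1 near $z=2$, namely verifying uniform geometric decay of $g(2^j)$ and hence convergence of $\sum|g(d)|/d$. Since $z$ is fixed with $z<2$, this is a finite computation at the prime $2$, plus a uniform bound $|g(p^j)|\le (C/p)^{j}$-ish for $p\ge3$ obtained by Cauchy estimates on the circle $|w|=r_p$ with $z<r_p<p$ for the local factor in $w=p^{-s}$. If this becomes messy, I would fall back on citing the Sathe--Selberg asymptotic directly, as the paper indicates.
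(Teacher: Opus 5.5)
Your argument is correct, but it takes a different route from the paper. The paper gives no proof and simply invokes the Sathe--Selberg theorem, i.e.\ the Selberg--Delange asymptotic $\sum_{m\le M}z^{\Omega(m)}\sim c(z)M(\log M)^{z-1}$ for $0<z<2$. You instead prove only the needed upper bound, by elementary means:
the factorization $z^{\Omega}=g*\tau_z$ with $\sum_d|g(d)|/d<\infty$;
a Chebyshev--Mertens bound $\sum_{n\le x}\tau_z(n)\ll_z x(\log 2x)^{z-1}$;
and the observation that $z>1$ lets you replace $\log(2M/d)$ by $\log(2M)$.
The citation is shorter and gives an asymptotic with an explicit constant, uniform for $z$ in compact subsets of $(0,2)$. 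Your route is self-contained and uses nothing deeper than Mertens and Chebyshev. It also suffices for the only use of the lemma in the paper, the tail bound in Lemma~\ref{lem:rho-tail}.

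One claim in Step 1 should be corrected. $g(p^j)$ does not depend on $p$: it is the $j$-th Taylor coefficient $c_j=\sum_{i=0}^{j}(-1)^i\binom{z}{i}z^{j-i}$ of $(1-w)^z/(1-zw)$. Since $c_2=z(z-1)/2\neq0$, the bound $|g(p^j)|\ll_z(z/p)^jC_z^j$ is false as written. What is true, and what your convergence claims actually use, is $|c_j|\le z^j\sum_{i\ge0}\bigl|\binom{z}{i}\bigr|z^{-i}\ll_z z^j$, i.e.\ $|g(p^j)|p^{-j}\ll_z(z/p)^j$. With this, the ``obstacle'' at $p=2$ is just the convergent series $\sum_j(z/2)^j$, and neither Cauchy estimates nor the factor $j^{O(1)}$ are needed.

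In fact Step 1 can be dropped: the Chebyshev argument of Step 2 applies directly to $f=z^{\Omega}$. Indeed $\sum_{p^j\le y,\,j\ge2}z^j\log p^j\le y\sum_p\sum_{j\ge2}jz^j(\log p)p^{-j}\ll_z y$, and $\sum_{n\le x}z^{\Omega(n)}/n\le\prod_{p\le x}(1-z/p)^{-1}\ll_z(\log x)^z$. In both places the hypothesis $z<2$ is used only at $p=2$.
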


By \cite[Proposition 2.7]{LT}, we also have the following estimate on the  fourth moment of exponential sums.

\begin{proposition}\label{prop:fourth}
For $X\geq J\geq2$ and
any sequence $|b(n)|\leq1$,
\begin{align}
 \int_0^X\int_{\Torus}
 \abs{\sum_{x\leq n\leq x+J}b(n)w_{\ba}(n)e(\alpha n)}^4
 \,d\alpha\,dx \ll_{\ell,\ba}
 X\bigl(J^3+J^2(\log X)^{2\ell}+J(\log X)^{3\ell}\bigr).
 \label{eq:weighted-fourth}
\end{align}
In particular, the right-hand side is $O_{\ell,\ba}(XJ^3)$
if $J\geq(\log X)^{2\ell}$. The constants are independent of $b$.
\end{proposition}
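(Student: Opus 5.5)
The plan is to follow the proof of \cite[Proposition 2.7]{LT}, keeping track of the dependence on the bounded coefficient sequence $b$. The engine is Parseval's identity in $\alpha$.

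\textbf{Step 1: Parseval.} Write
\[
 S_x(\alpha)=\sum_{x\leq n\leq x+J}b(n)w_{\ba}(n)e(\alpha n).
\]
Then $|S_x|^4=|S_x^2|^2$, and
\[
 S_x(\alpha)^2=\sum_m c_x(m)e(\alpha m),\qquad
 c_x(m)=\sum_{\substack{n_1+n_2=m\\ n_i\in[x,x+J]}}b(n_1)b(n_2)w_{\ba}(n_1)w_{\ba}(n_2).
\]
Hence
\[
 \int_{\Torus}|S_x|^4\,d\alpha=\sum_m|c_x(m)|^2 .
\]
Since $|b|\leq1$ and $w_{\ba}\geq0$, we have $|c_x(m)|\leq r_x(m)$, where $r_x(m)$ is the same sum with $b$ replaced by $1$. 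So it suffices to treat $b\equiv1$; this is why the constants are independent of $b$.

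Expanding $\sum_m r_x(m)^2$ gives a weighted count of quadruples $n_1+n_2=n_3+n_4$ in $[x,x+J]$ with weight $\prod_i w_{\ba}(n_i)$. Integrating over $x\in[0,X]$, each quadruple is counted with measure at most $J$ of admissible $x$.

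\textbf{Step 2: split the quadruples.}

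\emph{Degenerate quadruples}, where $\{n_1,n_2\}=\{n_3,n_4\}$. These contribute
\[
 \ll\int_0^X\Bigl(\sum_{n\in[x,x+J]}w_{\ba}(n)^2\Bigr)^2dx .
\]
This is bounded by pointwise bounds $w_{\ba}\leq(\log 2X)^\ell$ together with the mass bound of Lemma~\ref{lem:mass} on average over $x$, which produces the terms $XJ^2(\log X)^{2\ell}$ and $XJ(\log X)^{3\ell}$. The sub-case $n_1=n_2=n_3=n_4$ gives the last term.

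\emph{Generic quadruples}. Write $n_1=n,\ n_2=n+h_1,\ n_3=n+h_2$, with $n_4$ then determined, and $|h_i|\leq J$. After discarding proper prime powers (negligible, as in Lemma~\ref{lem:mass}), the weight requires $4\ell$ shifted linear forms $n+a_i+\text{const}$ to be prime. The upper-bound sieve \cite[Theorem 7.16]{FI} counts such $n$ in an interval of length $\asymp J$ as
\[
 \ll J(\log X)^{-m}\,\mathfrak S(\text{tuple}),
\]
where $m$ is the number of distinct forms. Multiplying by the weights $(\log X)^{m}$ leaves $\ll J\,\mathfrak S$. For the sieve to apply we need $J\geq X^{\varepsilon}$ in the short interval; otherwise we pass to the long-interval count over $n\leq 2X$ after integrating in $x$. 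Summing over $h_1,h_2$ then uses the average bound
\[
 \sum_{|h_1|,|h_2|\leq J}\mathfrak S\ll J^2
\]
(Gallagher-type). Together with the factor $XJ$ from $x$ and $n$, this gives $XJ^3$.

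\textbf{Main obstacle.} The generic case is the hard part. The singular series of the combined tuple must be averaged uniformly over shifts up to $J$, and coincidences among the $4\ell$ forms (when some $h$ equals a difference $a_i-a_j$) reduce the number of distinct forms. Those coincidences must be absorbed into the lower-order terms $J^2(\log X)^{2\ell}$ and $J(\log X)^{3\ell}$. I would follow \cite{LT} and stratify by the number of distinct forms.

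The final claim is immediate: if $J\geq(\log X)^{2\ell}$, then $J^2(\log X)^{2\ell}\leq J^3$ and $J(\log X)^{3\ell}\leq J^{5/2}\leq J^3$, so the right-hand side is $O_{\ell,\ba}(XJ^3)$.
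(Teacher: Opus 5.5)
The paper gives no argument for this proposition; it imports the bound from \cite[Proposition 2.7]{LT}. Your overall route is the right one: Parseval in $\alpha$, majorizing $|c_x(m)|$ by the $b\equiv1$ count (this is indeed why the constants do not depend on $b$), integrating out $x$ at the cost of a factor $J$, and then bounding weighted additive quadruples over $n\le 2X$ by the upper-bound sieve, uniformly in the shifts, plus an average of singular series.

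However, one step you commit to fails. For the degenerate quadruples $\{n_1,n_2\}=\{n_3,n_4\}$, the pointwise bound and the first-moment bound of Lemma~\ref{lem:mass} only give
\[
 \int_0^X\Bigl(\sum_{x\le n\le x+J}w_{\ba}(n)^2\Bigr)^2dx
 \ll(\log X)^{3\ell}J\int_0^X\sum_{x\le n\le x+J}w_{\ba}(n)\,dx
 \ll XJ^2(\log X)^{3\ell}.
\]
No argument using only these two inputs can do better. A weight equal to $(\log X)^\ell$ on complete blocks of $J$ consecutive integers, of total density $(\log X)^{-\ell}$, satisfies both bounds and attains $XJ^2(\log X)^{3\ell}$. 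This exceeds the claimed right-hand side by the factor $(\log X)^{3\ell}/J$, which is unbounded for e.g.\ $J=(\log X)^{2\ell}$. So the ``in particular'' clause is not reached. In the paper's application $J\asymp\exp(\sqrt{\log N})$, so the weaker bound would suffice there, but it does not prove the statement as written.

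The missing input is the same sieve you use for generic quadruples, now applied to $2\ell$ forms.
\begin{itemize}
\item For $n\ne n'$, bound $w_{\ba}(n)^2w_{\ba}(n')^2\ll(\log X)^{2\ell}w_{\ba}(n)w_{\ba}(n')$.
\item For $h\notin\{a_i-a_j\}$, use $\sum_{n\le 2X}w_{\ba}(n)w_{\ba}(n+h)\ll X\,\mathfrak S(\ba\cup(\ba+h))$, together with a Gallagher average over $0<|h|\le J$.
\item The $O_{\ba}(1)$ exceptional values $h\ne0$ contribute $O(X(\log X)^{\ell-1})$ each.
\end{itemize}
Together with the diagonal $n=n'$, this gives $XJ^2(\log X)^{2\ell}+XJ(\log X)^{3\ell}$.

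More generally, the stratification you defer to \cite{LT} is exactly where the exponents $2\ell$ and $3\ell$ come from, and it should be made explicit. Any coincidence among the shifts $a_i$, $a_i+h_1$, $a_i+h_2$, $a_i+h_1-h_2$ puts $(h_1,h_2)$ on one of $O_\ell(1)$ lines $h_1=d$, $h_2=d$, $h_1-h_2=d$, $h_1-2h_2=d$, with $d\in\{a_i-a_j\}$ (including $d=0$).
\begin{itemize}
\item A generic point of such a line has at most $2\ell$ collisions, with equality only on $h_2=0$ and $h_2=h_1$. These are exactly your degenerate quadruples.
\item The $O_\ell(1)$ points lying on two lines have at most $3\ell$ collisions, with equality only at $h_1=h_2=0$.
\end{itemize}
A pair with $c$ collisions contributes $\ll X(\log X)^{c}\mathfrak S$. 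The weight is still of size $(\log X)^{4\ell}$, not $(\log X)^m$ as you wrote, while the sieve saves only $(\log X)^{4\ell-c}$.

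Now average the singular series over each stratum. This is legitimate because the pairwise differences of the shifts are nonzero constants or non-constant linear forms in $(h_1,h_2)$ with coefficients in $\{0,\pm1,\pm2\}$. The strata of dimension $2$, $1$ and $0$ then contribute $XJ\cdot J^2$, $XJ\cdot J(\log X)^{2\ell}$ and $XJ(\log X)^{3\ell}$ respectively, which is precisely the claimed bound.
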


For $\ell=1$ and $a_1=0$, Proposition~\ref{prop:fourth} gives,
for every sequence $|a(n)|\leq1$,
\begin{equation}
\begin{aligned}
 &\int_0^X\int_{\Torus}
 \abs{\sum_{x\leq n\leq x+J}a(n)\Lambda(n)e(\alpha n)}^4
 \,d\alpha\,dx\ll X\bigl(J^3+J^2(\log X)^2+J(\log X)^3\bigr).
\end{aligned}
 \label{eq:local-prime-fourth}
\end{equation}
This is $O(XJ^3)$ for $J\geq(\log X)^2$.
The specialization is used for the prime averages below; the
general version treats the fixed prime-tuple weights.

\subsection{ Short exponential sums}

We cite a Fourier-analytic result in \cite[Proposition 3.2]{LT}. 

\begin{lemma}[{\cite[Proposition 3.2]{LT}}]\label{lem:transfer}
Let $h\geq2$ be an integer and $X\geq2h$. Let $F,G:\Z\to\mathbb C$
be supported on $[1,X]$ and $[2h,X]$, respectively, with $|F|\leq1$.
Suppose that
\begin{align}
 \sup_{\alpha\in\Torus}\int_0^X
 \abs{\sum_{x\leq n\leq x+2h}F(n)e(\alpha n)}^2dx
 &\leq\eta h^2X,\label{eq:local-H2}\\
 \int_0^X\int_{\Torus}
 \abs{\sum_{x\leq n\leq x+2h}G(n)e(\alpha n)}^4d\alpha\,dx
 &\leq C_1h^3X.\label{eq:local-H1}
\end{align}
Then
\begin{equation}\label{eq:local-transfer}
 \sum_{r=1}^h\abs{\sum_m F(m+r)G(m)}
 \leq2^{3/4}C_1^{1/4}\eta^{1/4}hX.
\end{equation}
\end{lemma}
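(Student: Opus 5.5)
The plan is a localize-then-Fourier argument. First, linearize the absolute values: choose unimodular $c_r$ with
\[
 \sum_{r=1}^h\Bigl|\sum_m F(m+r)G(m)\Bigr| = \sum_{r=1}^h c_r\sum_m F(m+r)G(m).
\]
Then localize with the identity $\int_{\mathbb R}\one_{x\leq m\leq x+h}\,dx=h$. This writes the left side as $h^{-1}\int_0^X\Sigma(x)\,dx$, where
\[
 \Sigma(x)=\sum_{x\leq m\leq x+h}G(m)\sum_{r=1}^h c_rF(m+r).
\]
The supports of $F$ and $G$ confine $x$ to an interval of length $O(X)$. In $\Sigma(x)$ both $m$ and $m+r$ lie in the window $[x,x+2h]$, which is exactly the window appearing in \eqref{eq:local-H2} and \eqref{eq:local-H1}.

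Next, expand each local correlation in Fourier form. Put
\[
 S_F(x,\alpha)=\sum_{x\leq n\leq x+2h}F(n)e(\alpha n),\qquad K(\alpha)=\sum_{r=1}^hc_re(-\alpha r),
\]
and define $S_G(x,\alpha)$ as the analogous sum over $G(m)\one_{x\leq m\leq x+h}$. Orthogonality on $\Torus$ gives
\[
 \Sigma(x)=\int_\Torus S_F(x,\alpha)\overline{S_G(x,\alpha)}K(\alpha)\,d\alpha .
\]
By Parseval, $\int_\Torus|K|^2=h$. Replacing the $G$-window $[x,x+h]$ by $[x,x+2h]$, so that \eqref{eq:local-H1} applies, should be handled by a routine smoothing or by splitting the window. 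Cauchy--Schwarz in $(x,\alpha)$ then yields
\[
 \Bigl|\int_0^X\Sigma(x)\,dx\Bigr|\leq (hX)^{1/2}\Bigl(\int_0^X\!\!\int_\Torus|S_F|^2|S_G|^2\,d\alpha\,dx\Bigr)^{1/2}.
\]

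To bound the mixed moment, I would use Cauchy--Schwarz in $x$ for each fixed $\alpha$, then Cauchy--Schwarz in $\alpha$. This gives the product of $\bigl(\sup_\alpha\int_x|S_F|^4\bigr)^{1/2}$ and $\bigl(\int\!\!\int|S_G|^4\bigr)^{1/2}$. Hypothesis \eqref{eq:local-H1} controls the $G$-factor by $(C_1h^3X)^{1/2}$. For the $F$-factor, the trivial bound $|S_F|\leq 2h+1$ together with \eqref{eq:local-H2} gives $\int_x|S_F|^4\ll h^2\cdot\eta h^2X$ uniformly in $\alpha$. This is the only place where the supremum in $\alpha$ in \eqref{eq:local-H2} is used, and it is why a fourth moment is demanded only of $G$.

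The main obstacle is the exponent bookkeeping. Done naively as above, the pieces combine to $C_1^{1/4}\eta^{1/4}h^{5/4}X$, which is worse than \eqref{eq:local-transfer} by a factor $h^{1/4}$. The argument must therefore be arranged more efficiently. First, I would use the short $[x,x+h]$ window for the $G$-side in the fourth moment. Second, I would not spend the full $\int_\Torus|K|^2=h$ on the $F$-side. Instead I would apply H\"older with exponents $(4,4,2)$ in $(x,\alpha)$ directly to $|S_F|\,|S_G|\,|K|$. The $L^4$ norm of $S_F$ should then be estimated by interpolating the $L^2$ bound \eqref{eq:local-H2} against the Parseval identity $\int_\Torus|S_F|^2\leq 2h+1$, rather than against the trivial sup bound. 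Tracking the constants in this optimized H\"older step is what should produce the factor $2^{3/4}$. I expect it to be the delicate part, while the localization and Fourier expansion are routine.
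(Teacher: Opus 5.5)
The paper gives no proof of this lemma; it quotes it from \cite[Proposition 3.2]{LT}. Your framework is sound: linearize with unimodular $c_r$, localize, and write the local correlation as $\int_{\Torus}S_F\overline{S_G}K\,d\alpha$. The gap is in the step you rely on to remove the extra $h^{1/4}$. Hölder with exponents $(4,4,2)$, with $\|K\|_{L^2(dx\,d\alpha)}=(hX)^{1/2}$ split off, reaches the target only if $\int_0^X\int_{\Torus}|S_F|^4\,d\alpha\,dx\ll\eta h^3X$. That bound does not follow from $|F|\le1$ and \eqref{eq:local-H2}.

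For a counterexample, take $F(n)=e(\gamma n^2)$ on $[1,X]$, with $\gamma=1/(100h^2)$ and $X\ge h^3$. On each window $[x,x+2h]$ with $x\le X-2h$, the sum $S_F(x,\cdot)$ is essentially a Dirichlet kernel of length $2h$ centred at $-2\gamma x$, since $\gamma(n-x)^2\le 1/25$. Hence $\int_{\Torus}|S_F(x,\alpha)|^4\,d\alpha\gg h^3$. On the other hand $|S_F(x,\alpha)|\ll\min(h,\|\alpha+2\gamma x\|^{-1})$, and as $x$ runs over $[0,X]$ this centre sweeps $\Torus$ uniformly. So \eqref{eq:local-H2} holds with $\eta\asymp1/h$. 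In this case your scheme gives at best $C_1^{1/4}hX$, the trivial bound, while the target is $C_1^{1/4}h^{3/4}X$. Interpolating an $\alpha$-uniform $L^2_x$ bound with Parseval in $\alpha$ cannot produce an $L^4$ gain, so the factor $2^{3/4}$ will not come out of bookkeeping.

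The missing idea is that the $\alpha$-uniform hypothesis is only useful when paired with $\int_{\Torus}|K|^2$, so $S_F$ and the kernel must stay together. It is also cleanest to localize so that both hypotheses act on the same window. For $1\le r\le h$ one has $\int_{\mathbb R}\one_{[x,x+2h]}(m)\one_{[x,x+2h]}(m+r)\,dx=2h-r$, and $\operatorname{supp}G\subset[2h,X]$ forces $x\in[0,X]$. Hence the left side equals $\int_0^X\int_{\Torus}S_F(x,\alpha)S_G(x,-\alpha)K'(\alpha)\,d\alpha\,dx$. Here both sums run over $[x,x+2h]$, and $K'(\alpha)=\sum_{r\le h}c_r(2h-r)^{-1}e(-\alpha r)$ satisfies $\int_{\Torus}|K'|^2\le1/h$.

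Your short $G$-window is not a routine fix. For complex $G$ the fourth moment is not monotone under restriction from $[x,x+2h]$ to $[x,x+h]$, so splitting needs M.~Riesz's $L^4$ bound for interval projections, at a constant cost.

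Now apply Hölder with exponents $(4,4/3)$, then $\|g\|_{4/3}\le\|g\|_2^{1/2}\|g\|_1^{1/2}$:
\[
 |\mathrm{LHS}|\le\|S_G\|_{L^4}\,\|S_FK'\|_{L^2}^{1/2}\,\|S_FK'\|_{L^1}^{1/2}.
\]
The $L^2$ factor satisfies $\|S_FK'\|_{L^2}^2=\int_{\Torus}|K'|^2\int_0^X|S_F|^2\,dx\,d\alpha\le\eta hX$, which uses \eqref{eq:local-H2} exactly in its sup-outside form. The $L^1$ factor satisfies $\|S_FK'\|_{L^1}\le(2hX)^{1/2}(X/h)^{1/2}=\sqrt2\,X$, which uses only Parseval. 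Combined with \eqref{eq:local-H1}, this gives the bound $2^{1/4}C_1^{1/4}\eta^{1/4}hX$, which implies \eqref{eq:local-transfer}.
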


\begin{lemma}\label{lem:local-fourier}
Let
\[
 h_0=\left\lfloor\tfrac14\exp(\sqrt{\log N})\right\rfloor,
 \qquad X_0=4N.
\]
For $\delta\leq|\theta|\leq1/2$ and every integer $0\leq b\leq N$,
let $F_{\theta,b}(n)=u_\theta(n+b)$ for $1\leq n\leq4N$ and
$F_{\theta,b}(n)=0$ otherwise. There is $\eta=\eta(N)>0$,
independent of $\theta,b$, such that
\[
 \sup_{\alpha\in\Torus}\int_0^{X_0}
 \abs{\sum_{x\leq n\leq x+2h_0}F_{\theta,b}(n)e(\alpha n)}^2dx
 \leq\eta h_0^2X_0,
 \qquad \eta\ll_B L^{-B}
\]
for every fixed $B>0$.
\end{lemma}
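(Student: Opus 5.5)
The goal is a uniform bound on the short exponential sums of the shifted completely multiplicative function $u_\theta(\cdot+b)$ twisted by $e(\alpha n)$. This is exactly the input to the Matom\"aki--Radziwi\l\l--Tao theorem on short exponential sums of non-pretentious multiplicative functions. I would first remove the shift: writing $m=n+b$, the inner sum equals $e(-\alpha b)\sum_{x+b\le m\le x+b+2h_0}u_\theta(m)e(\alpha m)$, restricted to $m\le 4N+b\le 5N$. Hence, after the change of variables $y=x+b$, the integral is bounded by
\[
 \int_0^{5N}\abs{\sum_{y\le m\le y+2h_0}u_\theta(m)\mathbf 1_{[1,5N]}(m)e(\alpha m)}^2dy .
\]
The truncation indicator only affects windows near the endpoints, which contribute $O(h_0^3)$, negligible against $h_0^2N$. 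So it suffices to bound the unshifted quantity uniformly in $\alpha$. Because $b$ disappears except through the range of $y$, the uniformity in $b$ is automatic.

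Next I would invoke the MRT estimate for short averages of multiplicative functions twisted by linear phases, namely \cite[Theorem 1.3]{MRT} (the result underlying their Theorem 1.6). For a $1$-bounded multiplicative $g$, $X=5N$, window length $H'=2h_0$ and $Q$ small, it gives
\[
 \sup_\alpha\int_X^{2X}\abs{\sum_{x\le n\le x+H'}g(n)e(\alpha n)}dx\ll \Bigl(e^{-M/20}+\frac{\log\log H'}{\log H'}+(\log X)^{-1/700}\Bigr)H'X,
\]
with $M=\mathcal M(g;X,Q)$. I would apply this on dyadic blocks $[X',2X']\subset[1,5N]$ and discard blocks with $X'\le N/L^{B}$ trivially, since they contribute $O(h_0^2N L^{-B})$. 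On the remaining blocks, \eqref{eq:moving-frequency-distance} (or Lemma~\ref{lem:full-distance} directly with $Y\asymp X'$, so that $\log\log Y=L+O(\log L)$) gives $M\gg L^{1/5}$ uniformly for $\delta\le|\theta|\le 1/2$. The factor $\log\log H'/\log H'$ is $\ll (\log N)^{-1/2}\log\log N$, since $\log h_0\asymp\sqrt{\log N}$. The power-of-$\log X$ term is also tiny.

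To pass from the $L^1$ bound to the squared integral, I would use the trivial bound $|\sum|\le 2h_0+1$, which gives $\int|\cdot|^2\le 3h_0\int|\cdot|$. This produces
\[
 \eta\ll e^{-cL^{1/5}}+(\log N)^{-1/2}\log\log N+L^{-B}\ll_B L^{-B},
\]
because $e^{-cL^{1/5}}$ decays faster than any power of $L$. None of these bounds depends on $\theta$ (within the range) or on $b$, so $\eta$ is independent of both.

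The main obstacle is obtaining a version of the MRT twisted short-sum theorem whose pretentious-distance hypothesis matches ours. Specifically, one needs the infimum to be over $q\le Q$ with $Q$ a small power of $\log X$ (Lemma~\ref{lem:full-distance} only allows $q\le(\log Y)^{1/125}$), together with $|t|\le X$, and with constants uniform over $\alpha$. I would check the precise form cited in \cite{MRT} and adjust $Q$ to $\min\{(\log N)^{1/125},(\log h_0)^{c}\}$ as in Proposition~\ref{prop:integer-fourier}. If the version available only gives $\mathcal M$ with $Q$ tied to $H'$, that is harmless, since $(\log h_0)^{c}$ is much larger than needed. The dyadic bookkeeping near $N/L^B$ is routine.
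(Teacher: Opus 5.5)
Your proposal is correct and is essentially the paper's proof. It removes $b$ by the substitution $m=n+b$ and applies the Matom\"aki--Radziwi\l\l--Tao twisted short-sum bound to $u_\theta$, with $\mathcal M\gg L^{1/5}$ supplied by \eqref{eq:moving-frequency-distance}. It then passes from the $L^1$ to the $L^2$ integral via the trivial bound $2h_0+1$ on each window sum, and absorbs the $O(h_0^3)$ endpoint windows into $\eta$. (After shifting, the truncation is really $\mathbf 1_{[b+1,b+4N]}$ rather than $\mathbf 1_{[1,5N]}$, which changes nothing.) The only difference is that the paper quotes \cite[Theorem 1.7]{MRT} for general $1$-bounded multiplicative $g$ directly in the form $\sup_\alpha\int_0^{Y}$, with $Y=6N$ and $Q=\min\{(\log Y)^{1/125},(\log 2h_0)^5\}$, so your dyadic decomposition and your discarding of blocks below $N/L^B$ are unnecessary, though harmless.
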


\begin{proof}
By the short exponential sum estimate established in \cite[Theorem 1.7]{MRT}, for a $1$-bounded
multiplicative $g$ and $Y\geq J\geq10$, we have
\begin{align*}
 \sup_{\alpha\in\mathbb T}\int_0^Y
  \left|\sum_{x\leq n\leq x+J}g(n)e(\alpha n)\right|dx \ll JY\left(
  e^{-\mathcal M(g;Y,Q_J)/20}
  +\frac{\log\log J}{\log J}+(\log Y)^{-1/700}\right),
\end{align*}
where
\[
Q_J=\min\{(\log Y)^{1/125},(\log J)^5\}.
\]
Take $g=u_\theta$, $Y=6N$,  $J=2h_0$, and
\(
 Q_0=\min\{(\log(6N))^{1/125},(\log(2h_0))^5\}.
\)
By \eqref{eq:moving-frequency-distance}, which gives
$\mathcal M(u_\theta;6N,Q_0)\gg L^{1/5}$, it follows that
\begin{equation}\label{eq:MRT-short}
 \sup_{\alpha\in\mathbb T}\int_0^{6N}
 \left|\sum_{x\leq m\leq x+2h_0}u_\theta(m)e(\alpha m)\right|dx
 \ll h_0N\eta_0,
\end{equation}
where we may choose absolute constants $C,c>0$ and set
\[
 \eta_0=C\left(e^{-cL^{1/5}}
       +\frac{\log\log(2h_0)}{\log(2h_0)}
       +(\log(6N))^{-1/700}\right).
\]

For an integer $0\leq b\leq N$, define on $\mathbb Z$
\[
 F_{\theta,b}(n)=
 \begin{cases}u_\theta(n+b),&1\leq n\leq4N,\\0,&\text{otherwise}.
 \end{cases}
\]
Then $|F_{\theta,b}|\leq1$ and its support is contained in $[1,X_0]$.
Fix $\alpha$, and let
\[
 U_b(x,\alpha)=\sum_{x\leq n\leq x+2h_0}
                      F_{\theta,b}(n)e(\alpha n).
\]
For almost every $x\in[0,4N-2h_0]$, substituting $m=n+b$ gives
\[
 U_b(x,\alpha)=e(-\alpha b)
       \sum_{x+b\leq m\leq x+b+2h_0}u_\theta(m)e(\alpha m).
\]
Since $0\leq b\leq N$, $x+b\in [0,5N]$.
It follows from \eqref{eq:MRT-short} that
\[
 \int_0^{4N-2h_0}|U_b(x,\alpha)|\,dx\ll h_0N\eta_0.
\]
By the trivial bound $|U_b(x,\alpha)|\leq2h_0+1$,
\begin{align*}
 \int_0^{4N-2h_0}|U_b(x,\alpha)|^2dx
 &\leq(2h_0+1)\int_0^{4N-2h_0}|U_b(x,\alpha)|dx \ll h_0^2N\eta_0.
\end{align*}
The trivial bound also gives
\[
 \int_{4N-2h_0}^{4N}|U_b(x,\alpha)|^2dx
 \leq2h_0(2h_0+1)^2\ll h_0^3.
\]
Thus, 
\begin{equation}\label{eq:truncated-local-fourier}
 \sup_{\alpha\in\mathbb T}\int_0^{4N}|U_b(x,\alpha)|^2dx
 \ll h_0^2N\left(\eta_0+\frac{h_0}{N}\right).
\end{equation}
The constants do not depend on $b$. 
This verifies \eqref{eq:local-H2} with
$F=F_{\theta,b}$, $X=X_0=4N$, $h=h_0$, and  $\eta=C'(\eta_0+h_0/N)$ for some large enough constant $C'>0$. Moreover, by the choice of $\eta_0$ and $h_0$, for every fixed $B>0$ we have
\begin{equation}\label{eq:eta-decay}
 \eta\ll_B L^{-B}.
\end{equation}

\end{proof}

\subsection{Prime and von Mangoldt weighted correlations}

\begin{proposition}\label{prop:prime-fourier}
For every fixed $A>0$ and $k\geq1$,
\begin{equation}\label{eq:prime-frequencies}
 \sup_{\exp(\sqrt{\log N})\leq H\leq N}
 \sup_{\bh'\in[N]^{k-1}}
 \sup_{\delta\leq|\theta|\leq1/2}
 \E_{h_1\in[H]}|P^{\rm pr}_{(h_1,\bh')}(\theta)|^2
 \ll_{A,k}L^{-A}.
\end{equation}
\end{proposition}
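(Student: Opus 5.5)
Write $P^{\rm pr}_{(h_1,\bh')}(\theta)$ as a correlation of the type in Lemma~\ref{lem:transfer}. Put
\[
 G(m)=\one_{\{m\ \text{prime},\ m\le N\}}\log m\,\prod_{j=2}^k u_\theta(m+h_j),
 \qquad F(n)=u_\theta(n)
\]
on the relevant range. Then
\[
 \pi(N)\,P^{\rm pr}_{(h_1,\bh')}(\theta)\approx\frac1{\log N}\sum_m F(m+h_1)G(m),
\]
after the usual partial summation. That partial summation removes the $\log$ weight, costs $O(N/\log^2N)$, and discards $m<2h_0$. Replacing $\Lambda$ by $\one_{\rm prime}\log$ costs only $O(\sqrt N\log N)$ from prime powers. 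It is cleaner to use $\Lambda(m)$ directly as the weight, with $b(m)=\prod_{j\ge2}u_\theta(m+h_j)\one_{m\le N}$, and then compare with primes.

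Since $|P^{\rm pr}|\le1$, it suffices to bound $\E_{h_1\in[H]}|P^{\rm pr}_{(h_1,\bh')}(\theta)|$ by $L^{-A}$.

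\emph{Reduction to blocks of length $h_0$.} Split $[1,H]$ into consecutive blocks $(c,c+h_0]$ with $c$ a multiple of $h_0$. Since $H\ge\exp(\sqrt{\log N})\ge 4h_0$, the incomplete final block contributes $O(h_0/H)=O(1)$ relative proportion at worst. This is not good enough if $H$ is close to $4h_0$. So I would instead take $h$ to be a divisor-friendly scale, namely $h=\lfloor H/4\rfloor\ge h_0$, or run Lemma~\ref{lem:local-fourier} with $J=2h$ in place of $2h_0$. The proof of Lemma~\ref{lem:local-fourier} works verbatim for any $h_0\le h\le N/4$, since $\log\log(2h)/\log(2h)\ll L^{-B}$ once $\log h\ge\tfrac12\sqrt{\log N}$. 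The truncation term $h/N$ is the only issue when $H\asymp N$. To handle it, I would cover $[H]$ by at most $O(H/h_0)$ blocks of exact length $h_0$, plus a remainder of length $<h_0$. The remainder has weight $\le h_0/H\le 1/4$, which is still not small. It is therefore better to choose $h\in[h_0,2h_0]$ with $H/h$ nearly an integer, or simply to tile $[1,H]$ by $\lceil H/h_0\rceil$ blocks, overshooting by at most $h_0$ shifts. This costs a factor $\le2$ in the normalization and only adds nonnegative terms, so it is harmless for an upper bound.

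For a block starting at $c$, the shifts are $h_1=c+r$ with $1\le r\le h_0$, and $F(m+c+r)=F_{\theta,c}(m+r)$ with $F_{\theta,c}$ as in Lemma~\ref{lem:local-fourier}, valid because $c\le H\le N$.

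\emph{Applying the transfer lemma.} Lemma~\ref{lem:local-fourier} gives \eqref{eq:local-H2} with $\eta\ll_B L^{-B}$, uniformly in $\theta$ and $c$. For \eqref{eq:local-H1}, apply \eqref{eq:local-prime-fourth} with $a(n)=b(n)$, $X=X_0$ and $J=2h_0\ge(\log X)^2$, which gives $C_1\ll1$ uniformly in $\bh'$ and $\theta$. Restrict $G$ to $[2h_0,N]$ so that the support hypothesis holds; the discarded range contributes $O(h_0\log N)$, which is negligible. Lemma~\ref{lem:transfer} then yields
\[
 \sum_{r=1}^{h_0}\Bigl|\sum_m F_{\theta,c}(m+r)\Lambda(m)b(m)\Bigr|\ll L^{-B/4}h_0N.
\]
Summing over the $O(H/h_0)$ blocks and dividing by $H\cdot\pi(N)\log N\asymp HN$ gives $\E_{h_1}|P^{\rm pr}|\ll L^{-B/4}$ plus the conversion errors $O(1/\log N)$. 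Taking $B=4A$ finishes the proof.

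\emph{Main obstacle.} The delicate step is the bookkeeping between $\Lambda$-weighted sums and the prime average $\frac1{\pi(N)}\sum_{p\le N}$. Partial summation in $p$ with the weight $\log p$ introduces $\max_{N'\le N}$ over truncations, and these truncations depend on $h_1$ through the inner absolute value. To avoid that, I would instead insert the weight $\Lambda(m)/\log N$ directly. Since $\frac1{\pi(N)}\sum_{p\le N}(1-\log p/\log N)\ll 1/\log\log N$ is too weak, I would use the dyadic splitting $p\in(N/2^{i+1},N/2^i]$ for $i\le 10\log L$. On each such piece $\log p=\log N+O(\log L)$, so the relative error is $O(\log L/\log N)$. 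The primes below $N L^{-A}$ contribute $O(L^{-A})$ trivially. Each dyadic piece is handled by the transfer argument above, with $b$ multiplied by the interval indicator, which is still $1$-bounded. This gives the $L^{-A}$ bound uniformly in all the stated parameters.
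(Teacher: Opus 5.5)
Your proposal is correct and follows essentially the paper's route. It uses Lemma~\ref{lem:local-fourier} on the $F$-side, the fourth-moment bound \eqref{eq:local-prime-fourth} with the remaining shifts absorbed into the $1$-bounded coefficient, Lemma~\ref{lem:transfer} on blocks of length $h_0$, tiling of $[H]$ by $\lceil H/h_0\rceil$ overshooting blocks, and finally $|P^{\rm pr}|^2\le|P^{\rm pr}|$.

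The only difference is the prime-to-$\Lambda$ bookkeeping. The paper handles this more cleanly by taking $G_\theta(n)=(\log N)\one_{\{\sqrt N<n\le N,\ n\text{ prime}\}}\prod_{j\ge2}u_\theta(n+h_j)$, so that $|G_\theta|\le2\Lambda$. The ratio is then absorbed into the bounded coefficient, at no cost beyond dropping $p\le\sqrt N$.

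Your dyadic detour is harmless but unnecessary, because your estimate of the direct conversion error is off. In fact $\frac1{\pi(N)}\sum_{p\le N}(1-\log p/\log N)\ll1/\log N$, not merely $1/\log\log N$. So inserting $\Lambda(m)/\log N$ directly already costs only $O(1/\log N)$, uniformly in $h_1$. If you keep the dyadic split, $i$ must run up to about $(A/\log 2)\log L$, not $10\log L$, to reach $NL^{-A}$.
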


\begin{proof}
Put
\[
 h_0=\left\lfloor\frac14\exp(\sqrt{\log N})\right\rfloor,
 \qquad X_0=4N.
\]
Then for sufficiently large $N$,
\[
 \log(2h_0)=\sqrt{\log N}+O(1),\qquad
 (\log X_0)^2\leq2h_0<\sqrt N.
\]
And every allowed $H$ satisfies $h_0\leq H/4$.

Use Lemma~\ref{lem:local-fourier} with its $F_{\theta,b}$ and $\eta$.
In particular, \eqref{eq:local-H2} holds uniformly in $b$ and $\theta$,
and $\eta$ satisfies \eqref{eq:eta-decay} for every fixed $B>0$.

Now, we fix $h_2,\ldots,h_k\in[N]$ and define, with zero extension,
\[
 G_\theta(n)=(\log N)
   \mathbf1_{\{\sqrt N<n\leq N,\ n\ {\rm prime}\}}
                 \prod_{j=2}^ku_\theta(n+h_j).
\]
The product is interpreted as $1$ when $k=1$. Then  $\operatorname{supp}G_\theta \subset [2h_0,X_0]$, since  $2h_0<\sqrt N$.
On this support, $n=p>\sqrt N$ is prime, so
$\log N\leq2\log p=2\Lambda(p)$, and hence $|G_\theta(n)|\leq2\Lambda(n)$ for all $n\ge1$.
Put
\[
 a_\theta(n)=
 \begin{cases}G_\theta(n)/(2\Lambda(n)),&\Lambda(n)>0,\\
               0,&\Lambda(n)=0.
 \end{cases}
\]
Then $|a_\theta(n)|\leq1$ and $G_\theta(n)=2a_\theta(n)\Lambda(n)$.
This holds for all $h_2,\ldots,h_k$.

Applying \eqref{eq:local-prime-fourth} with $X=X_0$, $J=2h_0\geq(\log X_0)^2$,
and $a=a_\theta$, we get 
\begin{align*}
 \quad\int_0^{4N}\int_{\mathbb T}
  \left|\sum_{x\leq n\leq x+2h_0}G_\theta(n)e(\alpha n)\right|^4
                  d\alpha\,dx \ll X_0h_0^3.
\end{align*}
This verifies \eqref{eq:local-H1} with an absolute $C_1$ independent
of $b,\theta,h_2,\ldots,h_k$.

All hypotheses of Lemma~\ref{lem:transfer} have now been verified.
With $F=F_{\theta,b}$, $G=G_\theta$, $X=X_0$, and $h=h_0$, by  \eqref{eq:local-transfer} we obtain
\[
 \sum_{r=1}^{h_0}\left|\sum_n F_{\theta,b}(n+r)G_\theta(n)\right|
 \ll\eta^{1/4}h_0N.
\]
If $G_\theta(n)\neq0$ and $r\leq h_0$, then   $\sqrt{N} < n \leq N$ and
$1\leq n+r\leq N+h_0<4N$. The inner sum above is exactly
\[
 (\log N)\sum_{\sqrt N<p\leq N}
          u_\theta(p+b+r)\prod_{j=2}^ku_\theta(p+h_j).
\]
Dividing by $h_0\pi(N)\log N$ and using
$N/(\pi(N)\log N)\ll1$, we obtain
\[
 \frac1{h_0}\sum_{r=1}^{h_0}
 \left|\frac1{\pi(N)}\sum_{\sqrt N<p\leq N}
        u_\theta(p+b+r)\prod_{j=2}^ku_\theta(p+h_j)\right|
 \ll\eta^{1/4}.
\]
By the prime number theorem, we have
$\pi(\sqrt N)/\pi(N)\ll  N^{-1/2}$.
Therefore,
\begin{equation}\label{eq:prime-block}
 \frac1{h_0}\sum_{r=1}^{h_0}
 \left|\frac1{\pi(N)}\sum_{p\leq N}
          u_\theta(p+b+r)\prod_{j=2}^ku_\theta(p+h_j)\right|
 \ll\eta^{1/4}+N^{-1/2}.
\end{equation}
This estimate is uniform for every integer $0\leq b\leq N$
and  $h_2,\dots,h_k\in [N]$. By the definition of $P^{\rm pr}_{\bh}(\theta)$, \eqref{eq:prime-block}  is equivalent to saying that
\[
 \frac1{h_0}\sum_{r=1}^{h_0}
 \left| P^{\rm pr}_{(b+r, h_2,\dots,h_k)}(\theta)  \right|
 \ll\eta^{1/4}+N^{-1/2}.
\]

Let $\exp(\sqrt{\log N})\leq H\leq N$ and
$M_H=\lceil H/h_0\rceil$.
The blocks
\[
 \{ih_0+1,\ldots,(i+1)h_0\},\qquad 0\leq i<M_H,
\]
cover $[H]$. Their starting points satisfy
$0\leq ih_0<H\leq N$, so \eqref{eq:prime-block} applies to
every block. Also,
$M_Hh_0\leq H+h_0\leq2H$.
For each fixed $\bh'\in[N]^{k-1}$,
\begin{align*}
 \frac1H\sum_{h_1=1}^H|P^{\rm pr}_{\bh}(\theta)|
 &\leq\frac1H\sum_{i=0}^{M_H-1}\sum_{r=1}^{h_0}
       |P^{\rm pr}_{(ih_0+r,h_2,\ldots,h_k)}(\theta)|\\
 &\ll\frac{M_Hh_0}{H}(\eta^{1/4}+N^{-1/2})
 \ll\eta^{1/4}+N^{-1/2}.
\end{align*}
The remaining shifts stay fixed throughout this argument.
By $|P^{\rm pr}_{\bh}|^2\leq|P^{\rm pr}_{\bh}|$, it follows that
\[
 \E_{h_1\in[H]}|P^{\rm pr}_{(h_1,\bh')}(\theta)|^2
 \ll\eta^{1/4}+N^{-1/2}.
\]
Then \eqref{eq:prime-frequencies} follows by  \eqref{eq:eta-decay} with $B=4A$ and 
$N^{-1/2}\ll_A L^{-A}$. The constants are independent of
$\bh'\in[N]^{k-1}$, so taking the supremum over these shifts is valid.
\end{proof}

For $\bh\in\N^k$, define
\begin{equation}\label{eq:P-definition}
 P^{\rm wt}_{\bh}(\theta)=\frac1N\sum_{n=1}^{N}
    w_{\ba}(n)\prod_{j=1}^{k}u_\theta(n+h_j).
\end{equation}
Lemma~\ref{lem:mass} and the triangle inequality give
\begin{equation}\label{eq:P-trivial}
 |P^{\rm wt}_{\bh}(\theta)|\leq C_{\ba}(N)\ll_{\ell,\ba}1.
\end{equation}

Using the method in the proof of Proposition \ref{prop:prime-fourier}, we prove the same result for $P^{\rm wt}_{\bh}$.

\begin{proposition}\label{prop:weighted-fourier}
For every fixed $A>0$,
\begin{equation}\label{eq:weighted-fourier}
 \sup_{\exp(\sqrt{\log N})\leq H\leq N}
 \sup_{\bh'\in[N]^{k-1}}
 \sup_{\delta\leq|\theta|\leq1/2}
 \E_{h_1\in[H]}|P^{\rm wt}_{(h_1,\bh')}(\theta)|^2
 \ll_{A,k,\ell,\ba}L^{-A}.
\end{equation}
\end{proposition}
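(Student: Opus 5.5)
We follow the proof of Proposition~\ref{prop:prime-fourier} step by step, replacing the prime indicator by the weight $w_{\ba}$. Put $h_0=\lfloor\frac14\exp(\sqrt{\log N})\rfloor$ and $X_0=4N$. Lemma~\ref{lem:local-fourier} supplies the functions $F_{\theta,b}(n)=u_\theta(n+b)\one_{[1,4N]}(n)$. These satisfy \eqref{eq:local-H2} with some $\eta\ll_B L^{-B}$, uniformly in $0\le b\le N$ and $\delta\le|\theta|\le1/2$.

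For fixed $\bh'\in[N]^{k-1}$ we set
\[
 G_\theta(n)=\one_{\{\sqrt N<n\le N\}}\,w_{\ba}(n)\prod_{j=2}^k u_\theta(n+h_j),
\]
so that $G_\theta=b_\theta w_{\ba}$ with $|b_\theta|\le1$. Its support lies in $[2h_0,X_0]$ because $2h_0<\sqrt N$.

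To verify \eqref{eq:local-H1}, we apply Proposition~\ref{prop:fourth} with $X=X_0$, $J=2h_0$ and $b=b_\theta$. Since $2h_0\ge(\log X_0)^{2\ell}$ for large $N$, its right-hand side is $O_{\ell,\ba}(X_0h_0^3)$. This gives $C_1\ll_{\ell,\ba}1$, independent of $\theta$, $b$ and $\bh'$. The one point to check is that restricting to $n>\sqrt N$ is harmless. It is, because the cutoff is absorbed into the coefficient sequence $b_\theta$, which Proposition~\ref{prop:fourth} allows to be arbitrary with $|b_\theta|\le1$.

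Lemma~\ref{lem:transfer} then gives
\[
 \sum_{r=1}^{h_0}\Bigl|\sum_n F_{\theta,b}(n+r)G_\theta(n)\Bigr|\ll_{\ell,\ba}\eta^{1/4}h_0N.
\]
For $\sqrt N<n\le N$ and $r\le h_0$ we have $1\le n+r\le 4N$. Hence the inner sum equals $\sum_{\sqrt N<n\le N}w_{\ba}(n)u_\theta(n+b+r)\prod_{j\ge2}u_\theta(n+h_j)$. Dividing by $h_0N$ and adding back the range $n\le\sqrt N$, whose contribution is $\ll N^{-1/2}(\log N)^\ell$ by \eqref{eq:mass-bound}, we obtain
\[
 \frac1{h_0}\sum_{r=1}^{h_0}\bigl|P^{\rm wt}_{(b+r,\bh')}(\theta)\bigr|\ll_{\ell,\ba}\eta^{1/4}+N^{-1/2}(\log N)^\ell,
\]
uniformly in $0\le b\le N$ and $\bh'$.

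Next we cover $[H]$ by the $M_H=\lceil H/h_0\rceil$ blocks $\{ih_0+1,\dots,(i+1)h_0\}$, as in Proposition~\ref{prop:prime-fourier}. Since $M_Hh_0\le2H$, this yields
\[
 \E_{h_1\in[H]}|P^{\rm wt}_{(h_1,\bh')}(\theta)|\ll\eta^{1/4}+N^{-1/2}(\log N)^\ell.
\]
For the square we use \eqref{eq:P-trivial}: $|P^{\rm wt}|^2\le C_{\ba}(N)|P^{\rm wt}|\ll_{\ell,\ba}|P^{\rm wt}|$. Choosing $B=4A$ in $\eta\ll_BL^{-B}$ and noting $N^{-1/2}(\log N)^\ell\ll L^{-A}$ gives \eqref{eq:weighted-fourier}.

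The main obstacle is confirming that the fourth-moment hypothesis holds with a constant independent of all varying parameters, given that $w_{\ba}$ may be as large as $(\log N)^\ell$. Proposition~\ref{prop:fourth} handles this, provided $J=2h_0$ exceeds $(\log X_0)^{2\ell}$. That holds here since $\exp(\sqrt{\log N})$ grows faster than any power of $\log N$. Everything else carries over verbatim from the prime case, with the factor $C_{\ba}(N)\ll1$ taking the place of normalization by $\pi(N)$.
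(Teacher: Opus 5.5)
Your proposal is correct and follows the paper's proof essentially step for step. It uses the same block length $h_0$ and the same $F_{\theta,b}$ from Lemma~\ref{lem:local-fourier}, and the same truncated weight $G$ with the cutoff and the other shifts absorbed into the bounded coefficients of Proposition~\ref{prop:fourth}; it then applies Lemma~\ref{lem:transfer}, covers $[H]$ by blocks, and uses $|P^{\rm wt}|^2\le C_{\ba}(N)|P^{\rm wt}|$ with $B=4A$. The only cosmetic difference is that you keep the error term $N^{-1/2}(\log N)^\ell$, whereas the paper rounds it up to $N^{-1/3}$.
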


\begin{proof}
Fix $\delta\leq|\theta|\leq1/2$, and put
\[
 h_0=\left\lfloor\tfrac14\exp(\sqrt{\log N})\right\rfloor,
 \qquad X_0=4N.
\]
For sufficiently large $N$ in terms of $\ell,\ba$,
\begin{equation}\label{eq:block-size}
 \log(2h_0)=\sqrt{\log N}+O(1),\qquad
 10\leq(\log X_0)^{2\ell}\leq2h_0<\sqrt N,
 \qquad h_0\leq H/4.
\end{equation}
Indeed, $\sqrt{\log N}$ eventually exceeds
$2\ell\log\log(4N)$ and is smaller than $(\log N)/2$.

Lemma~\ref{lem:local-fourier} supplies $F_{\theta,b}$ and
$\eta\ll_B L^{-B}$ uniformly for $0\leq b\leq N$ and
$\delta\leq|\theta|\leq1/2$. Thus \eqref{eq:local-H2} holds
with $X=X_0$ and $h=h_0$.

Fix $h_2,\ldots,h_k\in[N]$, and define
\[
 G(n)=\one_{\sqrt N<n\leq N}\,w_{\ba}(n)
                   \prod_{j=2}^k u_\theta(n+h_j).
\]
For $k=1$ the product is empty and equals $1$.
By \eqref{eq:block-size}, $G$ is supported in $[2h_0,X_0]$.
The factors other than $w_{\ba}$ form a coefficient sequence
of absolute value at most one. Proposition~\ref{prop:fourth},
with $X=X_0$ and $J=2h_0$, therefore gives
\[
 \int_0^{X_0}\int_{\Torus}
 \abs{\sum_{x\leq n\leq x+2h_0}G(n)e(\alpha n)}^4d\alpha\,dx
 \ll_{\ell,\ba}X_0h_0^3.
\]
This is \eqref{eq:local-H1} with a constant $C_1$ independent
of $b,\theta,h_2,\ldots,h_k$.

Lemma~\ref{lem:transfer} now implies
\[
 \sum_{r=1}^{h_0}\abs{\sum_nF_{\theta,b}(n+r)G(n)}
 \ll_{\ell,\ba}\eta^{1/4}h_0N.
\]
On the support of $G$, $1\leq n+r\leq N+h_0<4N$, so the inner
sum equals
\[
 \sum_{\sqrt N<n\leq N}w_{\ba}(n)
       u_\theta(n+b+r)\prod_{j=2}^ku_\theta(n+h_j).
\]
Divide by $Nh_0$. Restoring the omitted $n\leq\sqrt N$
costs at most
$N^{-1/2}(\log(2N))^\ell\ll_\ell N^{-1/3}$ by
Lemma~\ref{lem:mass}. It follows that
\begin{equation}\label{eq:block-correlation}
 \frac1{h_0}\sum_{r=1}^{h_0}
 \abs{P^{\rm wt}_{(b+r,h_2,\ldots,h_k)}(\theta)}
 \ll_{\ell,\ba}\eta^{1/4}+N^{-1/3},
\end{equation}
uniformly for every integer $0\leq b\leq N$.

Let $M_H=\lceil H/h_0\rceil$. The blocks
$\{ih_0+1,\ldots,(i+1)h_0\}$, $0\leq i<M_H$, cover $[H]$.
Their starting points satisfy $0\leq ih_0<H\leq N$, so
\eqref{eq:block-correlation} applies to each block.
Moreover $M_Hh_0\leq H+h_0\leq2H$.
 Therefore, for each fixed $\bh'\in[N]^{k-1}$,
\[
 \frac1H\sum_{h_1=1}^H|P^{\rm wt}_{\bh}(\theta)|
 \ll_{\ell,\ba}\frac{M_Hh_0}{H}
                       (\eta^{1/4}+N^{-1/3})
 \ll_{\ell,\ba}\eta^{1/4}+N^{-1/3}.
\]
Keep the remaining shifts fixed. Since
$|P^{\rm wt}_{\bh}|^2\leq C_{\ba}(N)|P^{\rm wt}_{\bh}|$,
Lemma~\ref{lem:mass} gives
\[
 \E_{h_1\in[H]}|P^{\rm wt}_{(h_1,\bh')}(\theta)|^2
 \ll_{\ell,\ba}\eta^{1/4}+N^{-1/3}.
\]
Use \eqref{eq:eta-decay} with $B=4A$ and
$N^{-1/3}\ll_A L^{-A}$. All estimates are independent of
$H$, $\theta$, and $\bh'\in[N]^{k-1}$ in the permitted ranges,
proving the proposition. The arbitrary bounded coefficients in
Proposition~\ref{prop:fourth} are what allow the other shifts to
remain fixed, even when some of them coincide.
\end{proof}

\section{Averaged shift invariance}\label{sec:invariance}

Recall the three families $\rho^\star_{\bh}$ from the introduction;
in particular,
\begin{equation}\label{eq:rho-weighted}
 \rho^{\rm wt}_{\bh}(r)=\frac1N\sum_{n=1}^{N}
          w_{\ba}(n)\one_{\{S_{\bh}(n)=r\}}.
\end{equation}
They are nonnegative and finitely supported on $\Z_{\geq0}$;
extend each by zero to negative integers. Their common masses
within each family are
\begin{equation}\label{eq:common-masses}
 \sum_r\rho^\star_{\bh}(r)=m^\star_N,\qquad
 m^{\rm int}_N=m^{\rm pr}_N=1,\qquad
 m^{\rm wt}_N=C_{\ba}(N)\ll_{\ell,\ba}1.
\end{equation}
The final bound follows from Lemma~\ref{lem:mass}.
In particular $|P^\star_{\bh}(\theta)|\leq m^\star_N$.

For $\star\in\{{\rm int},{\rm pr},{\rm wt}\}$, put
\[
 \Delta\rho^\star_{\bh}(r)=\rho^\star_{\bh}(r)
                          -\rho^\star_{\bh}(r-1).
\]
For any $s\geq1$, write
$\|\Delta\rho^\star_{\bh}\|_{\ell^s}
=(\sum_{r\in\Z}|\Delta\rho^\star_{\bh}(r)|^s)^{1/s}$.
For every fixed $\bh'\in[N]^{k-1}$, define
\[
 D^\star_{N,H}(\bh')=\E_{h_1\in[H]}
                   \|\Delta\rho^\star_{(h_1,\bh')}\|_{\ell^1},
 \qquad \star\in\{{\rm int},{\rm pr},{\rm wt}\}.
\]
All three families now use the same single-shift average.

 First, we can write  $P^\star_{\bh}$ as a Fourier expansion of $\rho^\star_{\bh}$.  For the integer distribution,
\begin{align*}
 \sum_r\rho^{\rm int}_{\bh}(r)e(r\theta)
 =\frac1N\sum_{n\leq N}\sum_r
                 \mathbf1_{\{S_{\bh}(n)=r\}}e(r\theta)=\frac1N\sum_{n\leq N}e(S_{\bh}(n)\theta)
  =P^{\rm int}_{\bh}(\theta).
\end{align*}
The same calculation, with prime summation or with the weight
$w_{\ba}(n)$ inserted, gives
$\sum_r\rho^\star_{\bh}(r)e(r\theta)=P^\star_{\bh}(\theta)$
also for $\star={\rm pr},{\rm wt}$. Thus for all three measures, replacing $r-1$ by $r$ shows that
\[
 \sum_r\rho^\star_{\bh}(r-1)e(r\theta)
 =\sum_r\rho^\star_{\bh}(r)e((r+1)\theta)
 =e(\theta)P^\star_{\bh}(\theta).
\]
Subtracting yields
\[
 \sum_r\Delta\rho^\star_{\bh}(r)e(r\theta)
 =(1-e(\theta))P^\star_{\bh}(\theta).
\]

By Parseval's formula,
\[
\|\Delta\rho^\star_{\bh}\|_{\ell^2}^2
 =\int_{-1/2}^{1/2}|1-e(\theta)|^2
          |P^\star_{\bh}(\theta)|^2\,d\theta.
\]
Hold $\bh'\in[N]^{k-1}$ fixed and average only over $h_1\in[H]$.
Interchanging this finite average with the integral gives
\begin{equation}\label{eq:difference-parseval}
 \E_{h_1\in[H]}\|\Delta\rho^\star_{(h_1,\bh')}\|_{\ell^2}^2
 =\int_{-1/2}^{1/2}|1-e(\theta)|^2
       \E_{h_1\in[H]}|P^\star_{(h_1,\bh')}(\theta)|^2\,d\theta.
\end{equation}
This identity holds for each of the three families.
For $|\theta|\leq\delta$, we use
\[
 |1-e(\theta)|=2|\sin(\pi\theta)|\leq2\pi|\theta|,
 \qquad |P^\star_{(h_1,\bh')}(\theta)|\leq m_N^\star.
\]
The contribution from this range is therefore at most
\[
 4\pi^2(m_N^\star)^2\int_{-\delta}^{\delta}\theta^2\,d\theta
 =\frac{8\pi^2}{3}(m_N^\star)^2\delta^3.
\]
On $\delta<|\theta|\leq1/2$, use $|1-e(\theta)|\leq2$.
Since this part of the torus has measure at most $1$, we obtain
\begin{equation}\label{eq:parseval-split}
\begin{aligned}
 \E_{h_1\in[H]}\|\Delta\rho^\star_{(h_1,\bh')}\|_{\ell^2}^2 \leq\frac{8\pi^2}{3}(m_N^\star)^2\delta^3+4\sup_{\delta\leq|\theta|\leq1/2}
       \E_{h_1\in[H]}|P^\star_{(h_1,\bh')}(\theta)|^2.
\end{aligned}
\end{equation}
Recall that $L=\log\log(20N)$ and $\delta=L^{-2/5}$.
In particular, the low-frequency contribution in \ref{eq:parseval-split} is uniformly
$O(L^{-6/5})$, since the masses are bounded by
\eqref{eq:common-masses}.

Now, we will use  \ref{eq:parseval-split}  to estimate  $D^\star_{N,H}(\bh')=\E_{h_1\in[H]}
                   \|\Delta\rho^\star_{(h_1,\bh')}\|_{\ell^1}$ for $\star ={\rm int},{\rm pr},{\rm wt}$, respectively.

\subsection{Integer measure}

\begin{proposition}\label{prop:integer-invariance}
For any $k\geq1$,
\begin{equation}\label{eq:integer-invariance}
 \sup_{\log N\leq H\leq N}\sup_{\bh'\in[N]^{k-1}}D^{\rm int}_{N,H}(\bh')
 \ll_k L^{-1/5}\sqrt{\log L}.
\end{equation}
\end{proposition}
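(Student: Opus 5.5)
We deduce the $\ell^1$ bound from the $\ell^2$ bound \eqref{eq:parseval-split} by splitting $\Z$ into a window where $S_{\bh}$ concentrates and a tail.

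\emph{Step 1: the $\ell^2$ bound.} Insert Proposition~\ref{prop:integer-fourier} into \eqref{eq:parseval-split}, using $m^{\rm int}_N=1$. This gives
\[
 \E_{h_1\in[H]}\|\Delta\rho^{\rm int}_{(h_1,\bh')}\|_{\ell^2}^2\ll_k L^{-6/5}+\frac{\log L}{L}\ll\frac{\log L}{L},
\]
uniformly for $\log N\le H\le N$ and $\bh'\in[N]^{k-1}$.

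\emph{Step 2: concentration of $S_{\bh}$.} Each $n+h_j\le 2N$, so the Tur\'an--Kubilius inequality for $\Omega$ (with the variance estimate $\sum_{m\le 2N}(\Omega(m)-L)^2\ll NL$) applies to each summand. Note that $\log\log(n+h_j)=L+O(1)$ for all but $O(N/\log N)$ values of $n$. By Cauchy--Schwarz over the $k$ summands,
\[
 \frac1N\sum_{n\le N}(S_{\bh}(n)-kL)^2\ll_k L,
\]
uniformly in $\bh\in[N]^k$. Alternatively, Lemma~\ref{lem:exp-moment} gives exponential tails, but the second moment suffices.

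Let $I=[kL-R,kL+R]$ with a parameter $R$ to be chosen. By Chebyshev, $\sum_{r\notin I}\rho^{\rm int}_{\bh}(r)\ll_k L/R^2$. Hence
\[
 \sum_{r\notin I+\{0,1\}}|\Delta\rho^{\rm int}_{\bh}(r)|\le 2\sum_{r\notin I}\rho^{\rm int}_{\bh}(r)\ll_k \frac{L}{R^2}.
\]
On the window, Cauchy--Schwarz gives
\[
 \sum_{r\in I+\{0,1\}}|\Delta\rho^{\rm int}_{\bh}(r)|\le (2R+2)^{1/2}\|\Delta\rho^{\rm int}_{\bh}\|_{\ell^2}.
\]

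\emph{Step 3: average and optimize.} Average over $h_1\in[H]$ and apply Cauchy--Schwarz to the average together with Step 1:
\[
 D^{\rm int}_{N,H}(\bh')\ll_k \frac{L}{R^2}+R^{1/2}\Big(\frac{\log L}{L}\Big)^{1/2}.
\]
Choose $R=L^{3/5}$. Then $L/R^2=L^{-1/5}$ and $R^{1/2}(\log L/L)^{1/2}=L^{-1/5}\sqrt{\log L}$, which gives \eqref{eq:integer-invariance}. All constants are independent of $H$ and $\bh'$, so the suprema may be taken.

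The main obstacle is Step 2, since it requires uniformity in shifts up to size $N$. The key point is that Tur\'an--Kubilius is applied to the full interval $[1,2N]$, which absorbs any shift $h_j\le N$ by positivity. The cross terms are handled by Cauchy--Schwarz rather than by any independence between the summands, so no genuine correlation input is needed there.
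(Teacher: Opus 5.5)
Your proposal is correct and follows the paper's proof essentially step for step: the $\ell^2$ bound from Proposition~\ref{prop:integer-fourier} and \eqref{eq:parseval-split}, the Tur\'an second-moment bound over $[1,2N]$ combined with Cauchy--Schwarz over the $k$ summands, Cauchy--Schwarz on a window of width $\asymp R$ around $kL$, Chebyshev on the tail, and the choice $R=L^{3/5}$. The only cosmetic difference is that you enlarge the window to $I+\{0,1\}$, which bounds the tail by $2\sum_{r\notin I}\rho^{\rm int}_{\bh}(r)$ directly; the paper instead keeps the window fixed and passes to the threshold $R-1$.
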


\begin{proof}
Fix $\log N\leq H\leq N$ and $\bh'\in[N]^{k-1}$, and write
$\bh=(h_1,\bh')$. By Proposition~\ref{prop:integer-fourier} and
\eqref{eq:parseval-split}, we have
\begin{equation}\label{eq:integer-difference-l2}
 \E_{h_1\in[H]}\|\Delta\rho^{\rm int}_{\bh}\|_{\ell^2}^2
 \ll_k\delta^3+\frac{\log L}{L}
 =L^{-6/5}+\frac{\log L}{L}
 \ll_k\frac{\log L}{L}.
\end{equation}

For any $h_j\leq N$, the interval $[h_j+1,h_j+N]$ is contained
in $[1,2N]$, by the Tur\'an inequality (e.g.,  \cite[Theorem 3.1.2]{CojocaruMurty2006}) we have
\[
 \frac1N\sum_{n\leq N}(\Omega(n+h_j)-L)^2
 \leq\frac1N\sum_{m\leq2N}(\Omega(m)-L)^2\ll L.
\]
For each fixed $\bh$, by the definition of $\rho^{\rm int}_{\bh}$
and the Cauchy--Schwarz inequality,
\begin{equation}\label{eq:centered-moment}
\begin{aligned}
 \sum_r(r-kL)^2\rho^{\rm int}_{\bh}(r)
 &=\frac1N\sum_{n\leq N}(S_{\bh}(n)-kL)^2\\
 &=\frac1N\sum_{n\leq N}
        \left(\sum_{j=1}^k(\Omega(n+h_j)-L)\right)^2\\
 &\leq\frac{k}{N}\sum_{j=1}^k\sum_{n\leq N}
                     (\Omega(n+h_j)-L)^2\ll_k L.
\end{aligned}
\end{equation}
This bound holds for every $\bh\in[N]^k$, without averaging
over any coordinate. 

Let $R\geq2$ and
$\mathcal I_R=\{r\in\mathbb Z:|r-kL|\leq R\}$.
This interval contains at most $2R+1$ integers.
For each $\bh$, Cauchy--Schwarz in $r$ gives
\[
 \sum_{r\in\mathcal I_R}|\Delta\rho^{\rm int}_{\bh}(r)|
 \leq\sqrt{2R+1}
       \left(\sum_{r\in\mathcal I_R}
                  |\Delta\rho^{\rm int}_{\bh}(r)|^2\right)^{1/2}
 \leq\sqrt{2R+1}\|\Delta\rho^{\rm int}_{\bh}\|_{\ell^2}.
\]
Cauchy--Schwarz  also gives
\[
 \left(\E_{h_1\in[H]}\|\Delta\rho^{\rm int}_{\bh}\|_{\ell^2}\right)^2
 \leq\E_{h_1\in[H]}\|\Delta\rho^{\rm int}_{\bh}\|_{\ell^2}^2.
\]
Combining these inequalities with
\eqref{eq:integer-difference-l2}, we obtain
\[
 \E_{h_1\in[H]}\sum_{r\in\mathcal I_R}|\Delta\rho^{\rm int}_{\bh}(r)|
 \ll_k\sqrt{\frac{R\log L}{L}}.
\]

For $r\notin \mathcal I_R$,  for each $\bh$,
  by \eqref{eq:centered-moment} we have
\begin{align*}
 \sum_{|r-kL|>R}|\Delta\rho^{\rm int}_{\bh}(r)|
 &\leq\sum_{|r-kL|>R}\rho^{\rm int}_{\bh}(r)
       +\sum_{|r-kL|>R}\rho^{\rm int}_{\bh}(r-1)\\
 &\leq2\sum_{|s-kL|>R-1}\rho^{\rm int}_{\bh}(s)\\
 &\leq\frac2{(R-1)^2}\sum_s(s-kL)^2\rho^{\rm int}_{\bh}(s)
 \ll_k\frac{L}{R^2}.
\end{align*}
Averaging this estimate only over $h_1$ and adding the estimate
over $r\in\mathcal I_R$ yields
\[
 D^{\rm int}_{N,H}(\bh')
 \ll_k\sqrt{\frac{R\log L}{L}}+\frac{L}{R^2}.
\]
Take $R=L^{3/5}$. The two contributions are respectively
$O_k(L^{-1/5}\sqrt{\log L})$ and $O_k(L^{-1/5})$.
Their constants are independent of $H$ and $\bh'$, so taking
the required suprema proves \eqref{eq:integer-invariance}. 
\end{proof}

\subsection{Exponential tails for prime and weighted measure}

The tail estimate used below holds for each individual vector of shifts.
Thus no averaging over the fixed coordinates $h_2,\ldots,h_k$ is needed.

\begin{lemma}\label{lem:rho-tail}
Let $\star\in\{{\rm pr},{\rm wt}\}$, and put
$d_{\rm pr}=1$ and $d_{\rm wt}=\ell$.
Uniformly for $\bh\in[N]^k$, $R\geq0$, and every fixed $1<z<2$,
\begin{equation}\label{eq:rho-tail}
 \sum_{r>R}\rho^\star_{\bh}(r)
 \ll_{k,\ell,\ba,z}(\log N)^{d_\star+z-1}z^{-R/k}.
\end{equation}
In the prime case the implied constant depends only on $k$ and $z$.
In particular, with
\[
 D_\star=\frac{k(d_\star+4)}{\log(3/2)},
 \qquad R_N^\star=\lceil D_\star L\rceil,
\]
we have, uniformly for $\bh\in[N]^k$,
\begin{equation}\label{eq:rho-tail-log}
 \sum_{r>R_N^\star-1}\rho^\star_{\bh}(r)
 \ll_{k,\ell,\ba}(\log N)^{-2}.
\end{equation}
Again, in the prime case the implied constant depends only on $k$.
\end{lemma}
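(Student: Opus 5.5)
The plan is an exponential-moment (Chernoff/Markov) bound. For each fixed $\bh\in[N]^k$ and each $R\geq0$ we have
\[
 \one_{\{S_{\bh}(n)>R\}}\leq z^{(S_{\bh}(n)-R)}=z^{-R}\prod_{j=1}^k z^{\Omega(n+h_j)}.
\]
This alone loses too much, because the product of $k$ factors cannot be averaged directly. So I would use instead the observation that if $S_{\bh}(n)>R$ then some $j$ has $\Omega(n+h_j)>R/k$. This gives
\[
 \one_{\{S_{\bh}(n)>R\}}\leq\sum_{j=1}^k z^{\Omega(n+h_j)-R/k}.
\]
The right-hand side is a sum of single-shift exponential moments. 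This is where the factor $z^{-R/k}$ and the dependence on $k$ come from.

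\emph{Prime case.} Summing over $p\leq N$ gives
\[
 \sum_{r>R}\rho^{\rm pr}_{\bh}(r)\leq\frac{z^{-R/k}}{\pi(N)}\sum_{j=1}^k\sum_{p\leq N}z^{\Omega(p+h_j)}.
\]
I would drop the primality restriction entirely. Since $p+h_j\leq 2N$, Lemma~\ref{lem:exp-moment} gives
\[
 \sum_{p\leq N}z^{\Omega(p+h_j)}\leq\sum_{m\leq 2N}z^{\Omega(m)}\ll_z N(\log N)^{z-1}.
\]
Dividing by $\pi(N)\gg N/\log N$ yields $(\log N)^{1+z-1}$, i.e. $d_{\rm pr}=1$, with a constant depending only on $k,z$.

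\emph{Weighted case.} I would use the pointwise bound \eqref{eq:weight-pointwise}, $w_{\ba}(n)\leq(\log 2N)^\ell$, valid once $N$ is large in terms of $\ba$; small $N$ is absorbed into the constant, since the masses are bounded by Lemma~\ref{lem:mass}. This gives
\[
 \sum_{r>R}\rho^{\rm wt}_{\bh}(r)\leq\frac{(\log 2N)^\ell z^{-R/k}}{N}\sum_j\sum_{m\leq 2N}z^{\Omega(m)}\ll(\log N)^{\ell+z-1}z^{-R/k}.
\]
So $d_{\rm wt}=\ell$. The estimate is crude, but it is exactly what is claimed. Uniformity in $\bh\in[N]^k$ is automatic because only $h_j\leq N$ was used.

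\emph{Specialization \eqref{eq:rho-tail-log}.} Fix $z=3/2$ and apply \eqref{eq:rho-tail} with $R=R_N^\star-1\geq D_\star L-1$. Then
\[
 z^{-R/k}\ll\exp\bigl(-(d_\star+4)L\bigr).
\]
Since $L=\log\log(20N)\geq\log\log N$, this is at most $(\log N)^{-(d_\star+4)}$. Multiplying by $(\log N)^{d_\star+1/2}$ gives $(\log N)^{-7/2}\ll(\log N)^{-2}$.

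The only step needing care is the pigeonhole reduction to single shifts. It is what avoids any correlation estimate for $z^{\Omega(n+h_1)}\cdots z^{\Omega(n+h_k)}$, which would otherwise be the real obstacle. Everything else is bookkeeping of constants.
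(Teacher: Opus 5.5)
Your proposal is correct and follows the paper's proof essentially step for step. The shared steps are: the pigeonhole reduction of $S_{\bh}(n)>R$ to a single shift with $\Omega(n+h_j)>R/k$; the exponential Markov bound $\one_{\{\Omega(n+h_j)>R/k\}}\leq z^{\Omega(n+h_j)-R/k}$; the pointwise bound $(\log N)^{d_\star}$ on the normalized weight; extension to $m\leq 2N$ and Lemma~\ref{lem:exp-moment}; and the specialization $z=3/2$, $R=R_N^\star-1$.

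The only quibble is your aside on small $N$. Bounded mass alone does not dominate $z^{-R/k}$ as $R\to\infty$, so you would also need that $\rho^{\rm wt}_{\bh}$ is supported on $r\leq k\log_2(2N)$. The paper simply takes $N$ large, so this is immaterial.
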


\begin{proof}
For $1\leq n\leq N$, write
\[
 v_N^{\rm pr}(n)=\frac{N}{\pi(N)}
                   \one_{\{n\text{ is prime}\}},
 \qquad v_N^{\rm wt}(n)=w_{\ba}(n).
\]
Then both distributions have the common representation
\[
 \rho^\star_{\bh}(r)
 =\frac1N\sum_{n\leq N}v_N^\star(n)
                         \one_{\{S_{\bh}(n)=r\}}.
\]
The bound $\pi(N)\gg N/\log N$ and
\eqref{eq:weight-pointwise} give
\[
 0\leq v_N^\star(n)\ll_{\ell,\ba}(\log N)^{d_\star},
\]
with an absolute implied constant in the prime case.
If $S_{\bh}(n)>R$, then $\Omega(n+h_j)>R/k$ for at least one
$j\in[k]$. Since $z>1$, the union bound gives
\begin{align*}
 \sum_{r>R}\rho^\star_{\bh}(r)
 &\leq\frac1N\sum_{j=1}^k\sum_{n\leq N}v_N^\star(n)
                  \one_{\{\Omega(n+h_j)>R/k\}}\\
 &\ll_{\ell,\ba}\frac{(\log N)^{d_\star}}{N}z^{-R/k}
                  \sum_{j=1}^k\sum_{n\leq N}z^{\Omega(n+h_j)}\\
 &\ll_{k,\ell,\ba}\frac{(\log N)^{d_\star}}{N}z^{-R/k}
                  \sum_{m\leq2N}z^{\Omega(m)}.
\end{align*}
By Lemma~\ref{lem:exp-moment},
\[
 \sum_{m\leq2N}z^{\Omega(m)}
 \ll_z N(\log N)^{z-1}.
\]
This proves \eqref{eq:rho-tail}.

Now set $z=3/2$. Since $R_N^\star-1\geq D_\star L-1$,
\[
 z^{-(R_N^\star-1)/k}
 \leq z^{1/k}\exp\left(-\frac{D_\star\log z}{k}L\right)
 =z^{1/k}(\log(20N))^{-(d_\star+4)}.
\]
Substituting $R=R_N^\star-1$ in \eqref{eq:rho-tail} therefore gives
\[
 \sum_{r>R_N^\star-1}\rho^\star_{\bh}(r)
 \ll_{k,\ell,\ba}
 (\log N)^{d_\star+1/2}
 (\log(20N))^{-(d_\star+4)}
 \ll_{k,\ell,\ba}(\log N)^{-2}.
\]
This proves \eqref{eq:rho-tail-log}.
\end{proof}

\subsection{Prime measure}

\begin{proposition}\label{prop:prime-invariance}
For fixed $k\geq1$,
\begin{equation}\label{eq:prime-invariance}
 \sup_{\exp(\sqrt{\log N})\leq H\leq N}
 \sup_{\bh'\in[N]^{k-1}}D^{\rm pr}_{N,H}(\bh')
 \ll_k L^{-1/10}.
\end{equation}
\end{proposition}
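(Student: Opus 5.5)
We follow the integer case, Proposition~\ref{prop:integer-invariance}, replacing the Tur\'an second moment by the exponential tail of Lemma~\ref{lem:rho-tail}. The reason is that Tur\'an's inequality is not directly available along primes with shifts. The tail bound lets us cut to a window of length $O(L)$ instead of $O(\sqrt L)$. This costs a little in the exponent, whence $L^{-1/10}$ rather than $L^{-1/5}$.

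First fix $\exp(\sqrt{\log N})\leq H\leq N$ and $\bh'\in[N]^{k-1}$, and write $\bh=(h_1,\bh')$. We insert Proposition~\ref{prop:prime-fourier} (with, say, $A=2$) into \eqref{eq:parseval-split} with $m^{\rm pr}_N=1$. This gives
\[
 \E_{h_1\in[H]}\|\Delta\rho^{\rm pr}_{\bh}\|_{\ell^2}^2\ll_k \delta^3+L^{-2}\ll_k L^{-6/5},
\]
uniformly in $H$ and $\bh'$.

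Next split $\sum_r|\Delta\rho^{\rm pr}_{\bh}(r)|$ at $R=R_N^{\rm pr}=\lceil D_{\rm pr}L\rceil$. Since $\rho^{\rm pr}_{\bh}$ is supported on $r\geq0$, $\Delta\rho^{\rm pr}_{\bh}(r)=0$ for $r<0$. On the range $0\leq r\leq R$ there are at most $R+1\ll_k L$ integers. Cauchy--Schwarz in $r$, followed by Cauchy--Schwarz in the $h_1$-average, therefore gives
\[
 \E_{h_1}\sum_{0\leq r\leq R}|\Delta\rho^{\rm pr}_{\bh}(r)|\ll_k\sqrt{L}\,\bigl(\E_{h_1}\|\Delta\rho^{\rm pr}_{\bh}\|_{\ell^2}^2\bigr)^{1/2}\ll_k L^{1/2}L^{-3/5}=L^{-1/10}.
\]

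For $r>R$, bound $|\Delta\rho(r)|\leq\rho(r)+\rho(r-1)$. This gives
\[
 \sum_{r>R}|\Delta\rho^{\rm pr}_{\bh}(r)|\leq 2\sum_{s>R-1}\rho^{\rm pr}_{\bh}(s)\ll_k(\log N)^{-2}
\]
by \eqref{eq:rho-tail-log}, uniformly for every individual $\bh\in[N]^k$, so averaging over $h_1$ is harmless. Adding the two ranges gives $D^{\rm pr}_{N,H}(\bh')\ll_k L^{-1/10}+(\log N)^{-2}\ll_k L^{-1/10}$. All constants are independent of $H$ and $\bh'$, so taking suprema proves \eqref{eq:prime-invariance}.

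The only delicate point is the balance between the window length and the $\ell^2$ saving. The window must have length $\asymp L$, because Lemma~\ref{lem:rho-tail} only gives tails beyond $R\asymp L$ and no concentration near $kL$ is proved for primes. This forces the $\ell^2$ bound to beat $L^{-1}$. That is exactly why $\delta=L^{-2/5}$ was chosen so that $\delta^3=L^{-6/5}$, and why arbitrary power decay from Proposition~\ref{prop:prime-fourier} is needed for the high frequencies. One must also check that $h_1+N\leq 2N$ and the range $\bh\in[N]^k$ match the hypotheses of Lemma~\ref{lem:rho-tail}; they do.
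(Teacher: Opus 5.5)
Your proof is correct and follows the paper's argument essentially step for step. You use the $\ell^2$ bound from \eqref{eq:parseval-split} and Proposition~\ref{prop:prime-fourier}, Cauchy--Schwarz on the window $0\leq r\leq R_N^{\rm pr}\asymp_k L$ (giving $L^{1/2}L^{-3/5}=L^{-1/10}$), and the pointwise tail bound \eqref{eq:rho-tail-log} beyond it. Your choice $A=2$ instead of the paper's $A=4$ is immaterial, since either makes the high-frequency term $O(L^{-6/5})$; your heuristic remark that the integer-case window has length $O(\sqrt L)$ is slightly off (it is $L^{3/5}$ there), but it plays no role in the proof.
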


\begin{proof}
Fix $\exp(\sqrt{\log N})\leq H\leq N$ and
$\bh'=(h_2,\ldots,h_k)\in[N]^{k-1}$.
Apply Proposition~\ref{prop:prime-fourier} with $A=4$ and
\eqref{eq:parseval-split}. Since $m_N^{\rm pr}=1$, we obtain
\begin{equation}\label{prime_invariance_ell_2}
 \E_{h_1\in[H]}
       \|\Delta\rho^{\rm pr}_{(h_1,\bh')}\|_{\ell^2}^2
 \ll_k\delta^3+L^{-4}
 \ll_k L^{-6/5}.
\end{equation}
The implied constant is independent of both $H$ and $\bh'$.

Take $R=R_N^{\rm pr}$ from Lemma~\ref{lem:rho-tail}, then
$R\asymp_k L$.
For the central range $0\leq r\leq R$, Cauchy--Schwarz first in
$r$ gives
\begin{align*}
 &\E_{h_1\in[H]}\sum_{r=0}^{R}
                   |\Delta\rho^{\rm pr}_{(h_1,\bh')}(r)|\\
 &\quad\leq\sqrt{R+1}\,
       \E_{h_1\in[H]}
                   \|\Delta\rho^{\rm pr}_{(h_1,\bh')}\|_{\ell^2}\\
 &\quad\leq\sqrt{R+1}
       \left(\E_{h_1\in[H]}
                   \|\Delta\rho^{\rm pr}_{(h_1,\bh')}\|_{\ell^2}^2
       \right)^{1/2}
 \ll_k L^{1/2}L^{-3/5}=L^{-1/10}.
\end{align*}
For the tail, 
for each $h_1\in[H]$,
\begin{align*}
 \sum_{r>R}|\Delta\rho^{\rm pr}_{(h_1,\bh')}(r)|
 &\leq\sum_{r>R}\rho^{\rm pr}_{(h_1,\bh')}(r)
       +\sum_{r>R}\rho^{\rm pr}_{(h_1,\bh')}(r-1)\\
 &\leq2\sum_{s>R-1}\rho^{\rm pr}_{(h_1,\bh')}(s)
 \ll_k(\log N)^{-2},
\end{align*}
where the last bound follows from \eqref{eq:rho-tail-log}.
Averaging this inequality only over $h_1$ and adding the previous
estimate proves
\[
 D^{\rm pr}_{N,H}(\bh')
 \ll_k L^{-1/10}+(\log N)^{-2}
 \ll_k L^{-1/10}.
\]
All the estimates are uniform in $H$ and $\bh'$, proving
\eqref{eq:prime-invariance}.
\end{proof}

\subsection{Von Mangoldt weighted measure}

\begin{proposition}\label{prop:weighted-invariance}
For fixed $k,\ell\geq1$ and fixed distinct integers
$a_1,\ldots,a_\ell$,
\begin{equation}\label{eq:weighted-invariance}
 \sup_{\exp(\sqrt{\log N})\leq H\leq N}
 \sup_{\bh'\in[N]^{k-1}}D^{\rm wt}_{N,H}(\bh')
 \ll_{k,\ell,\ba}L^{-1/10}.
\end{equation}
\end{proposition}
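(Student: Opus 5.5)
The argument will follow the proof of Proposition~\ref{prop:prime-invariance} almost verbatim, with $\rho^{\rm pr}$ replaced by $\rho^{\rm wt}$. The only new ingredient is that the total mass is $m^{\rm wt}_N=C_{\ba}(N)$ rather than $1$, and this is bounded by Lemma~\ref{lem:mass}. Throughout we fix $\exp(\sqrt{\log N})\leq H\leq N$ and $\bh'\in[N]^{k-1}$, and we average only over $h_1\in[H]$.

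\textbf{Step 1: the $\ell^2$ bound.} We insert Proposition~\ref{prop:weighted-fourier} with $A=4$ into \eqref{eq:parseval-split}. Using $m^{\rm wt}_N\ll_{\ell,\ba}1$ from \eqref{eq:common-masses}, this gives
\[
 \E_{h_1\in[H]}\|\Delta\rho^{\rm wt}_{(h_1,\bh')}\|_{\ell^2}^2
 \ll_{k,\ell,\ba}\delta^3+L^{-4}\ll_{k,\ell,\ba}L^{-6/5},
\]
uniformly in $H$ and $\bh'$.

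\textbf{Step 2: splitting into a central range and a tail.} Take $R=R_N^{\rm wt}=\lceil D_{\rm wt}L\rceil$ from Lemma~\ref{lem:rho-tail}, so that $R\asymp_{k,\ell}L$. Note that $\rho^{\rm wt}_{\bh}$ is supported on $r\geq0$, since $S_{\bh}(n)\geq0$. On $0\leq r\leq R$, we apply Cauchy--Schwarz in $r$ and then in the $h_1$-average:
\[
 \E_{h_1\in[H]}\sum_{r=0}^{R}|\Delta\rho^{\rm wt}_{(h_1,\bh')}(r)|\leq\sqrt{R+1}\Bigl(\E_{h_1\in[H]}\|\Delta\rho^{\rm wt}_{(h_1,\bh')}\|_{\ell^2}^2\Bigr)^{1/2}\ll L^{1/2}L^{-3/5}=L^{-1/10}.
\]
For $r>R$, we use the pointwise bound $|\Delta\rho(r)|\leq\rho(r)+\rho(r-1)$ for each individual $h_1$. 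This gives
\[
\sum_{r>R}|\Delta\rho^{\rm wt}_{\bh}(r)|\leq 2\sum_{s>R-1}\rho^{\rm wt}_{\bh}(s).
\]
By \eqref{eq:rho-tail-log} with $d_{\rm wt}=\ell$, the right-hand side is $\ll_{k,\ell,\ba}(\log N)^{-2}$, uniformly in $\bh\in[N]^k$.

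\textbf{Step 3: conclusion.} Averaging the tail bound over $h_1$ and adding the central contribution yields
\[
D^{\rm wt}_{N,H}(\bh')\ll L^{-1/10}+(\log N)^{-2}\ll L^{-1/10}.
\]
All constants are independent of $H$ and $\bh'$, so we may take the suprema in \eqref{eq:weighted-invariance}.

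\textbf{Expected obstacle.} There is no genuine obstacle, since all the analytic input is already contained in Proposition~\ref{prop:weighted-fourier} and Lemma~\ref{lem:rho-tail}. The points that need care are bookkeeping. First, the low-frequency term in \eqref{eq:parseval-split} carries the factor $(m^{\rm wt}_N)^2$, which is bounded only through Lemma~\ref{lem:mass}, valid for $N$ large in terms of $\ba$. Second, the tail lemma must be used for each individual shift vector, and it requires the weight bound \eqref{eq:weight-pointwise}, which is where $d_{\rm wt}=\ell$ enters the choice of $R$.
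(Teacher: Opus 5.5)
Your proposal is correct and matches the paper's proof step for step. Both combine the $\ell^2$ bound from \eqref{eq:parseval-split} and Proposition~\ref{prop:weighted-fourier} (with $A=4$), Cauchy--Schwarz on the central range $0\le r\le R_N^{\rm wt}$, and the per-shift tail bound \eqref{eq:rho-tail-log}, giving $L^{-1/10}+(\log N)^{-2}\ll L^{-1/10}$ uniformly in $H$ and $\bh'$.
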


\begin{proof}
Fix $\exp(\sqrt{\log N})\leq H\leq N$ and
$\bh'\in[N]^{k-1}$. Apply \eqref{eq:parseval-split} with
$\star={\rm wt}$. Lemma~\ref{lem:mass} bounds the mass
$m_N^{\rm wt}=C_{\ba}(N)$, and
Proposition~\ref{prop:weighted-fourier} with $A=4$ bounds the
frequencies $\delta\leq|\theta|\leq1/2$. Consequently,
\begin{equation}\label{eq:weighted-difference-l2}
 \E_{h_1\in[H]}
       \|\Delta\rho^{\rm wt}_{(h_1,\bh')}\|_{\ell^2}^2
 \ll_{k,\ell,\ba}\delta^3+L^{-4}
 \ll_{k,\ell,\ba}L^{-6/5}.
\end{equation}
Let $R=R_N^{\rm wt}$ be as in Lemma~\ref{lem:rho-tail}, so
$R\asymp_{k,\ell}L$. On $0\leq r\leq R$, Cauchy--Schwarz
in $r$ again gives
\begin{align*}
 &\E_{h_1\in[H]}\sum_{r=0}^{R}
                    |\Delta\rho^{\rm wt}_{(h_1,\bh')}(r)|\\
 &\quad\leq\sqrt{R+1}\,
       \E_{h_1\in[H]}
                    \|\Delta\rho^{\rm wt}_{(h_1,\bh')}\|_{\ell^2}\\
 &\quad\leq\sqrt{R+1}
       \left(\E_{h_1\in[H]}
                    \|\Delta\rho^{\rm wt}_{(h_1,\bh')}\|_{\ell^2}^2
       \right)^{1/2}
 \ll_{k,\ell,\ba}L^{1/2}L^{-3/5}=L^{-1/10}.
\end{align*}
For every $h_1\in[H]$, the tail estimate
\eqref{eq:rho-tail-log} implies
\begin{align*}
 \sum_{r>R}|\Delta\rho^{\rm wt}_{(h_1,\bh')}(r)|
 &\leq\sum_{r>R}\rho^{\rm wt}_{(h_1,\bh')}(r)
       +\sum_{r>R}\rho^{\rm wt}_{(h_1,\bh')}(r-1)\\
 &\leq2\sum_{s>R-1}\rho^{\rm wt}_{(h_1,\bh')}(s)
 \ll_{k,\ell,\ba}(\log N)^{-2}.
\end{align*}
Averaging the tail estimate over
$h_1$ and combining it with the previous estimate yields
\[
 D^{\rm wt}_{N,H}(\bh')
 \ll_{k,\ell,\ba}L^{-1/10}+(\log N)^{-2}
 \ll_{k,\ell,\ba}L^{-1/10}.
\]
This is uniform in $H$ and $\bh'$, as asserted.
\end{proof}

\section{Proof of main results}\label{sec:dynamics}

\begin{lemma}\label{lem:ergodic}
Let $(X, \nu, T)$ be a  uniquely ergodic system. Let $f\in C(X)$.
Let $I$ be a nonempty finite set and let
$\{\rho_\iota:\iota\in I\}$ be a family of finitely supported
nonnegative finite measures on $\mathbb Z_{\geq0}$ of a common mass
$m\geq0$, extended by zero
to $\mathbb Z$. Define
\[
 \mathcal A_\iota g=\sum_{r\geq0}\rho_\iota(r)g\circ T^r,
 \qquad
 d_\iota=\sum_{r\in\mathbb Z}|\rho_\iota(r)-\rho_\iota(r-1)|.
\]
For an integer $J\geq1$, put
$f_J=J^{-1}\sum_{j=0}^{J-1}f\circ T^j$.
Then
\begin{equation}\label{eq:smoothing}
 \E_{\iota\in I}\norm{\mathcal A_\iota f-m\nu(f)}_\infty
 \leq m\norm{f_J-\nu(f)}_\infty
       +\frac{J-1}{2}\norm f_\infty\,\E_{\iota\in I}d_\iota.
\end{equation}
\end{lemma}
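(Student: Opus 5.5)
The plan is to compare $\mathcal A_\iota f$ with $\mathcal A_\iota f_J$ for each fixed $\iota$. The triangle inequality gives
\[
 \norm{\mathcal A_\iota f-m\nu(f)}_\infty\leq\norm{\mathcal A_\iota f_J-m\nu(f)}_\infty+\norm{\mathcal A_\iota f-\mathcal A_\iota f_J}_\infty .
\]
We then bound the two pieces separately and average over $\iota\in I$.

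For the first piece, observe that $\mathcal A_\iota$ is a positive linear operator with $\mathcal A_\iota 1=m$. Hence $\mathcal A_\iota f_J-m\nu(f)=\mathcal A_\iota(f_J-\nu(f))$, and therefore
\[
 \norm{\mathcal A_\iota f_J-m\nu(f)}_\infty\leq m\norm{f_J-\nu(f)}_\infty .
\]
This holds uniformly in $\iota$ and gives the first term of \eqref{eq:smoothing}. Unique ergodicity is not needed here; it only enters later, when $J\to\infty$ makes $\norm{f_J-\nu(f)}_\infty\to0$.

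For the second piece, we write $f-f_J=J^{-1}\sum_{j=0}^{J-1}(f-f\circ T^j)$ and treat each $j$ by summation by parts. Because $\rho_\iota$ is finitely supported and extended by zero, the reindexing $r\mapsto r-j$ is legitimate, and it gives
\[
 \sum_r\rho_\iota(r)\,f\circ T^{r+j}=\sum_r\rho_\iota(r-j)\,f\circ T^r .
\]
Terms with $r<j$ vanish since $\rho_\iota(r-j)=0$ there, so the sum still runs over $r\geq0$. Consequently
\[
 \mathcal A_\iota f-\mathcal A_\iota(f\circ T^j)=\sum_r\bigl(\rho_\iota(r)-\rho_\iota(r-j)\bigr)f\circ T^r ,
\]
whose sup norm is at most $\norm f_\infty\sum_r|\rho_\iota(r)-\rho_\iota(r-j)|$. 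Telescoping $\rho_\iota(r)-\rho_\iota(r-j)=\sum_{i=0}^{j-1}\bigl(\rho_\iota(r-i)-\rho_\iota(r-i-1)\bigr)$ and using translation invariance of the $\ell^1$ norm bounds this sum by $j\,d_\iota$. Averaging over $j$ gives
\[
 \norm{\mathcal A_\iota f-\mathcal A_\iota f_J}_\infty\leq\norm f_\infty d_\iota\cdot\frac1J\sum_{j=0}^{J-1}j=\frac{J-1}{2}\norm f_\infty d_\iota .
\]

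Adding the two bounds and averaging over $\iota\in I$ yields \eqref{eq:smoothing}. Every step is elementary. The only point needing care is the reindexing in the second piece, which is exactly why the zero extension to negative integers is assumed. The common mass $m$ is used only in the first piece.
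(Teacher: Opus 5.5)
Your proof is correct and follows the paper's argument. You use the same split into $\mathcal A_\iota(f_J-\nu(f))$, bounded via positivity and $\mathcal A_\iota\one=m$, and $\mathcal A_\iota f-\mathcal A_\iota f_J$, bounded by a shift-by-$j$ estimate averaged over $j$. The only cosmetic difference is in that shift estimate: the paper iterates the one-step operator bound $\norm{\mathcal A(g\circ T)-\mathcal A g}_\infty\leq\norm g_\infty d$, whereas you reindex by $j$ at once and telescope the differences of $\rho_\iota$ in $\ell^1$, which is equivalent.
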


\begin{proof}
Fix $\iota$ and abbreviate
$\rho=\rho_\iota$, $\mathcal A=\mathcal A_\iota$, and $d=d_\iota$.
Every sum defining $\mathcal A$ is finite, so the operator is linear
and maps $C(X)$ into $C(X)$.

Because $\rho(r)\geq0$ and $\sum_{r\geq0}\rho(r)=m$, for
$g\in C(X)$ and $x\in X$,
\begin{equation}\label{smoothing_trivial_bound}
 |\mathcal Ag(x)|
 \leq\sum_{r\geq0}\rho(r)|g(T^rx)|
 \leq\norm g_\infty\sum_{r\geq0}\rho(r)=m\norm g_\infty.
\end{equation}
Thus $\norm{\mathcal Ag}_\infty\leq m\norm g_\infty$ and
$\mathcal A\one=m$.  For $g\in C(X)$, replace $r+1$ by $s$ to obtain
\[
 \mathcal A(g\circ T)=\sum_{r\geq0}\rho(r)g\circ T^{r+1}
                     =\sum_{s\geq1}\rho(s-1)g\circ T^s.
\]
Subtracting $\mathcal Ag=\sum_{s\geq0}\rho(s)g\circ T^s$ gives
\[
 \mathcal A(g\circ T)-\mathcal Ag
 =\sum_{s\geq0}(\rho(s-1)-\rho(s))g\circ T^s.
\]
Then
\begin{align}
 \norm{\mathcal A(g\circ T)-\mathcal Ag}_\infty
 &\leq\sum_{s\geq0}|\rho(s-1)-\rho(s)|
                         \norm{g\circ T^s}_\infty \nonumber\\
 &\leq\norm g_\infty\sum_{s\geq0}|\rho(s-1)-\rho(s)|
 =\norm g_\infty d. \label{smoothing_telescoping_1}
\end{align}

Now, we use the technique of telescoping.
For an integer $j\geq1$, the telescoping identity gives
\[
 \mathcal A(f\circ T^j)-\mathcal Af
 =\sum_{i=0}^{j-1}
       \bigl(\mathcal A(f\circ T^{i+1})-\mathcal A(f\circ T^i)\bigr).
\]
Apply \eqref{smoothing_telescoping_1} to $g=f\circ T^i$. Because
$\norm{f\circ T^i}_\infty\leq\norm f_\infty$, we get
\begin{align*}
 \norm{\mathcal A(f\circ T^j)-\mathcal Af}_\infty
 &\leq\sum_{i=0}^{j-1}
       \norm{\mathcal A(f\circ T^{i+1})-\mathcal A(f\circ T^i)}_\infty\\
 &\leq\sum_{i=0}^{j-1}\norm f_\infty d
  =j\norm f_\infty d.
\end{align*}
For $j=0$ the difference is zero, so the same bound holds.
Linearity of $\mathcal A$ implies
\[
 \mathcal Af_J-\mathcal Af
 =\frac1J\sum_{j=0}^{J-1}
                  (\mathcal A(f\circ T^j)-\mathcal Af).
\]
Taking norms and using the preceding estimate, we obtain
\begin{align*}
 \norm{\mathcal Af_J-\mathcal Af}_\infty
 \leq\frac1J\sum_{j=0}^{J-1}j\norm f_\infty d
  =\frac{J-1}{2}\norm f_\infty d.
\end{align*}

By \eqref{smoothing_trivial_bound}, we have
\[
 \norm{\mathcal Af_J-m\nu(f)}_\infty
 =\norm{\mathcal A(f_J-\nu(f))}_\infty
 \leq m\norm{f_J-\nu(f)}_\infty.
\]
The triangle inequality now yields, for this fixed $\iota$,
\[
 \norm{\mathcal A_\iota f-m\nu(f)}_\infty
 \leq m\norm{f_J-\nu(f)}_\infty
       +\frac{J-1}{2}\norm f_\infty d_\iota.
\]
Then \eqref{eq:smoothing} follows by averaging over $\iota\in I$.
The argument also covers $m=0$, when every measure is zero. 
\end{proof}

We apply Lemma~\ref{lem:ergodic} with $m=1$ for the integer
and prime averages, and with $m=C_{\ba}(N)$ for the weighted
averages, to prove Theorems~\ref{thm:integers}, \ref{thm:primes},
and \ref{thm:weighted}. Since $X$ is a compact metric space and $(X,\nu,T)$ is uniquely ergodic, we have the following uniform ergodic theorem
\begin{equation}\label{eq:uniform-ergodicity}
 \norm{f_J-\nu(f)}_\infty\longrightarrow0\qquad(J\to\infty),
\end{equation}
see \cite[Theorem 6.19]{Walters1982}.

\begin{proof}[Proof of Theorem~\ref{thm:integers}]
Fix $k\geq1$, $f\in C(X)$, $\log N\leq H\leq N$, and
$\bh'\in[N]^{k-1}$. Apply Lemma~\ref{lem:ergodic} with
$I=[H]$, $\iota=h_1$, and
$\rho_\iota=\rho^{\rm int}_{(h_1,\bh')}$.
These are finitely supported probability measures on
$\mathbb Z_{\geq0}$, so their common mass is $m=1$.
For every $x\in X$, their averaging operators satisfy
\begin{equation}\label{average_to_density}
\begin{aligned}
 \sum_{r\geq0}\rho^{\rm int}_{(h_1,\bh')}(r)f(T^rx)
 &=\frac1N\sum_{n\leq N}\sum_{r\geq0}
       \one_{\{S_{(h_1,\bh')}(n)=r\}}f(T^rx)\\
 &=\frac1N\sum_{n\leq N}f(T^{S_{(h_1,\bh')}(n)}x)
 =A^{\rm int}_{N,(h_1,\bh')}f(x).
\end{aligned}
\end{equation}
The averaged difference norm in the lemma is
$D^{\rm int}_{N,H}(\bh')$. Thus, for every integer $J\geq1$,
\begin{align*}
 &\E_{h_1\in[H]}
       \norm{A^{\rm int}_{N,(h_1,\bh')}f-\nu(f)}_\infty \leq\norm{f_J-\nu(f)}_\infty
       +\frac{J-1}{2}\norm f_\infty D^{\rm int}_{N,H}(\bh').
\end{align*}
Use Proposition~\ref{prop:integer-invariance} and take the
suprema over $H$ and $\bh'$. We obtain
\begin{equation}\label{eq:integer-final-bound}
\begin{aligned}
 &\sup_{\log N\leq H\leq N}\sup_{\bh'\in[N]^{k-1}}
 \E_{h_1\in[H]}
       \norm{A^{\rm int}_{N,(h_1,\bh')}f-\nu(f)}_\infty\\
 &\hspace{12mm}\leq\norm{f_J-\nu(f)}_\infty
       +C_kJ\norm f_\infty L^{-1/5}\sqrt{\log L}.
\end{aligned}
\end{equation}

To justify the order of limits, let $\varepsilon>0$.
By \eqref{eq:uniform-ergodicity}, choose an integer $J$ such that
$\norm{f_J-\nu(f)}_\infty<\varepsilon/2$.
Keep this $J$ fixed. Since $L^{-1/5}\sqrt{\log L}\to0$,
there is an $N_0$ such that
\[
 C_kJ\norm f_\infty L^{-1/5}\sqrt{\log L}<\varepsilon/2
 \qquad(N\geq N_0).
\]
This choice of $N_0$ is independent of $H$, $\bh'$, and $x$.
Equation~\eqref{eq:integer-final-bound} then makes the supremum
of the averaged uniform-norm error less than $\varepsilon$,
proving \eqref{eq:integer-strong}.

For the absolute-value assertion, put
\[
 B^{\rm int}_{N,H,\bh'}(x)
 =\frac1{HN}\sum_{h_1=1}^{H}
       \abs{\sum_{n\leq N}f(T^{S_{(h_1,\bh')}(n)}x)}
 =\E_{h_1\in[H]}|A^{\rm int}_{N,(h_1,\bh')}f(x)|.
\]
The reverse triangle inequality gives
\begin{align*}
 \abs{B^{\rm int}_{N,H,\bh'}(x)-|\nu(f)|}
 &\leq\E_{h_1\in[H]}
       \abs{|A^{\rm int}_{N,(h_1,\bh')}f(x)|-|\nu(f)|}\\
 &\leq\E_{h_1\in[H]}
       \abs{A^{\rm int}_{N,(h_1,\bh')}f(x)-\nu(f)}\\
 &\leq\E_{h_1\in[H]}
       \norm{A^{\rm int}_{N,(h_1,\bh')}f-\nu(f)}_\infty.
\end{align*}
Taking the supremum over $x$, $H$, and $\bh'$ and applying
\eqref{eq:integer-strong} proves
\eqref{mainthm_dyn_Chowla_avg_eqn}.
\end{proof}

\begin{proof}[Proof of Theorem~\ref{thm:primes}]
Fix $k\geq1$, $f\in C(X)$, an allowed $H$, and
$\bh'\in[N]^{k-1}$. Apply Lemma~\ref{lem:ergodic} with
$I=[H]$, $\iota=h_1$, and
$\rho_\iota=\rho^{\rm pr}_{(h_1,\bh')}$. These are probability
measures, so their common mass is $m=1$. Expanding the definition,
\begin{align*}
 \sum_{r\geq0}\rho^{\rm pr}_{(h_1,\bh')}(r)f(T^rx)
 &=\frac1{\pi(N)}\sum_{p\leq N}\sum_{r\geq0}
       \one_{\{S_{(h_1,\bh')}(p)=r\}}f(T^rx)\\
 &=\frac1{\pi(N)}\sum_{p\leq N}f(T^{S_{(h_1,\bh')}(p)}x)
 =A^{\rm pr}_{N,(h_1,\bh')}f(x).
\end{align*}
Consequently, for every integer $J\geq1$,
\[
 \E_{h_1\in[H]}\norm{A^{\rm pr}_{N,(h_1,\bh')}f-\nu(f)}_\infty
 \leq\norm{f_J-\nu(f)}_\infty
       +\frac{J-1}{2}\norm f_\infty D^{\rm pr}_{N,H}(\bh').
\]
Proposition~\ref{prop:prime-invariance} gives
\begin{align}
 &\sup_{\exp(\sqrt{\log N})\leq H\leq N}
 \sup_{\bh'\in[N]^{k-1}}\E_{h_1\in[H]}
 \norm{A^{\rm pr}_{N,(h_1,\bh')}f-\nu(f)}_\infty\notag\\
 &\hspace{12mm}\leq\norm{f_J-\nu(f)}_\infty
                   +C_kJ\norm f_\infty L^{-1/10}.
 \label{eq:prime-final-bound}
\end{align}
Then all the uniformity assertions in Theorem~\ref{thm:primes} follow by the similar  argument to the proof of Theorem~\ref{thm:integers}.
\end{proof}

\begin{proof}[Proof of Theorem~\ref{thm:weighted}]
Fix $\exp(\sqrt{\log N})\leq H\leq N$ and
$\bh'\in[N]^{k-1}$. Use Lemma~\ref{lem:ergodic} with $I=[H]$,
$\iota=h_1$, $\rho_\iota=\rho^{\rm wt}_{(h_1,\bh')}$, and
$m=C_{\ba}(N)$. The measure has this mass for every $h_1$, and
\begin{align*}
 \sum_{r\geq0}\rho^{\rm wt}_{(h_1,\bh')}(r)f(T^rx)
 &=\frac1N\sum_{n=1}^{N}w_{\ba}(n)
       \sum_{r\geq0}\one_{\{S_{(h_1,\bh')}(n)=r\}}f(T^rx)\\
 &=\frac1N\sum_{n=1}^{N}w_{\ba}(n)f(T^{S_{(h_1,\bh')}(n)}x)
 =A^{\rm wt}_{N,(h_1,\bh')}f(x).
\end{align*}
The smoothing inequality is therefore
\begin{align*}
 &\E_{h_1\in[H]}
 \norm{A^{\rm wt}_{N,(h_1,\bh')}f-C_{\ba}(N)\nu(f)}_\infty\\
 &\qquad\leq C_{\ba}(N)\norm{f_J-\nu(f)}_\infty
       +\frac{J-1}{2}\norm f_\infty D^{\rm wt}_{N,H}(\bh').
\end{align*}
Proposition~\ref{prop:weighted-invariance} gives
\begin{align}
 &\sup_{\exp(\sqrt{\log N})\leq H\leq N}
 \sup_{\bh'\in[N]^{k-1}}\E_{h_1\in[H]}
 \norm{A^{\rm wt}_{N,(h_1,\bh')}f-C_{\ba}(N)\nu(f)}_\infty\notag\\
 &\hspace{12mm}\ll C_{\ba}(N)\norm{f_J-\nu(f)}_\infty
       +J\norm f_\infty L^{-1/10}.
 \label{eq:weighted-final-bound}
\end{align}
All the uniformity assertions in Theorem~\ref{thm:primes} follow by the similar  argument to the proof of Theorem~\ref{thm:integers}.
\end{proof}

\begin{proof}[Proof of Corollary~\ref{cor:HL}]
Under \eqref{eq:HL}, substitute
$C_{\ba}(N)=\mathfrak S(\ba)+o(1)$ into
\eqref{eq:factorization}. The error remains uniform in $H$,
$\bh'$, and $x$, since $\nu(f)$ is fixed.
If $\ba$ is not admissible, no dynamical estimate is needed:
for every $H\geq1$, every tuple of positive shifts $\bh'$, and
 every $x\in X$,
\[
 \abs{\frac1{HN}\sum_{h_1=1}^H\sum_{n=1}^{N}
            w_{\ba}(n)f(T^{S_{(h_1,\bh')}(n)}x)}
 \leq\norm f_\infty C_{\ba}(N)
 \ll_{\ell,\ba}\norm f_\infty\frac{(\log N)^{\ell+1}}N.
\]
Lemma~\ref{lem:mass} shows that this tends to zero without any
restriction on the growth of $H$ or $\bh'$.
\end{proof}

\section*{Acknowledgments}
ChatGPT-6 Astra was used to implement the ideas of the proofs of
Theorems~\ref{thm:integers}, ~\ref{thm:primes} and \ref{thm:weighted}.
The author verifies, corrects and rewrites the proofs, and takes
responsibility for the content.

\end{document}